\newif\ifpreprint\preprinttrue 
  \journalname{Foundations of Computational Mathematics}
\newcommand\tr{\operatorname{tr}}
\newcommand\sskw{\operatorname{sskw}}
\newcommand\inc{\operatorname{inc}}
\newcommand\skw{\operatorname{skw}}
\newcommand\vskw{\operatorname{vskw}}
\newcommand\mskw{\operatorname{mskw}}
\newcommand\sym{\operatorname{sym}}
\newcommand\grad{\operatorname{grad}}
\newcommand\alt{\operatorname{Alt}}
\renewcommand\div{\operatorname{div}}
\renewcommand\ker{\mathcal{N}}
\newcommand\curl{\operatorname{curl}}
\newcommand\rot{\operatorname{rot}}
\newcommand\dev{\operatorname{dev}}
\newcommand\spn{\operatorname{span}}
\newcommand\ran{\mathcal{R}}
\newcommand\K{\mathbb{K}}
\newcommand\M{\mathbb{M}}
\newcommand\cinc{\operatorname{cinc}}
\newcommand\dd{\,\mathrm{d}}
\renewcommand\S{{\mathbb S}}
\newcommand\R{\mathbb{R}}
\newcommand\x{\times}
\newcommand\V{{\mathbb{V}}}
\newcommand\W{{\mathbb{W}}}
\newcommand\Z{{\mathbb{Z}}}
\newcommand\deff{\operatorname{def}}
\renewcommand\gg{\operatorname{hess}}
\renewcommand\dd{{\div\div}}
\newcommand\sign{\operatorname{sgn}}
\newcommand{\bs}{{\scriptscriptstyle \bullet}}
\newtheorem{theorem}{Theorem}
\newtheorem{remark}{Remark}
\newtheorem{definition}{Definition}
\newtheorem{corollary}{Corollary}
\newtheorem{lemma}{Lemma}
\newtheorem{proposition}{Proposition}
\begin{document}

\ifpreprint
\title{Complexes from complexes}
\author{Douglas N. Arnold}
\address{Department of Mathematics, University of Minnesota, Minneapolis, MN, USA}
\email{arnold@umn.edu}
\author{Kaibo Hu}
\address{Department of Mathematics, University of Minnesota, Minneapolis, MN, USA}
\email{khu@umn.edu}
\thanks{The work of the first author was supported supported by NSF grant DMS-1719694 and Simons Foundation grant 601937, DNA}
\subjclass[2010]{58J10, 58A12, 58A14, 35J58}
\keywords{Differential complex, Hilbert complex, de Rham complex, BGG resolution, Finite element exterior calculus}
\else
\title{Complexes from complexes\thanks{Communicated by Hans Munthe-Kaas.\vskip3pt\hrule width38mm\vskip3pt The work of the first author was supported supported by NSF grant DMS-1719694 and Simons Foundation grant 601937, DNA.}}
\author{Douglas N. Arnold \and Kaibo Hu}
\institute{D. N. Arnold \and Kaibo Hu \at University of Minnesota, Minneapolis, MN, USA \email{arnold@umn.edu} \\\email{khu@umn.edu}}
\date{September 6, 2020}
\maketitle
\fi

\begin{abstract}
This paper is concerned with the derivation and properties of differential complexes arising from a variety of problems in differential equations, with applications in continuum mechanics, relativity, and other fields.  We present a systematic procedure which, starting from well-understood differential complexes such as the de Rham complex, derives new complexes and deduces the properties of the new complexes from the old.  We relate the cohomology of the output complex to that of the input complexes and show that the new complex has closed ranges, and, consequently, satisfies a Hodge decomposition, Poincar\'e type inequalities, well-posed Hodge-Laplacian boundary value problems, regular decomposition, and compactness properties on general Lipschitz domains.
\ifpreprint\else
\keywords{Differential complex \and Hilbert complex \and de Rham complex \and BGG resolution \and Finite element exterior calculus}
\fi
\end{abstract}

\ifpreprint\maketitle\fi

\section{Introduction}

Differential complexes are an important tool in the modeling, analysis, and---increasingly---the numerics of a number of problems. In physics, differential complexes relate to the decomposition of a field into a potential and a complementary part.  Recently, we have come to understand the extent to which stability and convergence of numerical methods rely on the preservation of the underlying structures of the differential complexes, in particular, the cohomology. Building on early works on finite element differential forms \cite{Bossavit.A.1998a,Hiptmair.R.2002a}, this point of view has been developed into the framework of the finite element exterior calculus (FEEC) by Arnold, Falk and Winther \cite{Arnold.D;Falk.R;Winther.R.2006a,Arnold.D;Falk.R;Winther.R.2010a} among others. 
  
The most canonical differential complex is the de~Rham complex.  It is of fundamental importance in numerous applications, such as electromagnetism and fluid dynamics, and by now it is vastly studied.  But there are many other complexes that arise in different applications and relate to different differential equations, the best known perhaps being the elasticity complex.  Key functional analytic and regularity properties of these other complexes are crucial for analysis and numerics, but are not so well understood. In this paper, we present a systematic procedure which, starting from well-understood differential complexes, constructs new complexes and deduces the properties of the new complexes from the old.

In order to better clarify the contents of the paper, we now quickly review the de~Rham complex on a bounded Lipschitz domain $\Omega\subset\R^n$, in several variant forms.  This discussion will be elaborated in Section~\ref{sec:preliminary} of the paper. The basic homological structure is captured
in the \emph{smooth} de~Rham complex, in which the spaces consist of differential forms with smooth coefficients (note that $\Lambda^{k}$ in the notation indicates a space of differential $k$-forms) and the differentials are exterior derivatives:
\begin{equation}\label{Cinfty-deRham}
\begin{tikzcd}
0 \arrow{r} &C^{\infty}\Lambda^{0} \arrow{r}{d^{0}} &C^{\infty}\Lambda^{1} \arrow{r}{d^{1}} &  \cdots \arrow{r}{d^{n-1}} & C^{\infty}\Lambda^{n}\arrow{r}&0.
 \end{tikzcd}
\end{equation}
If we restrict to three dimensions, we can use scalar and vector proxies to write this in calculus notation:
\begin{equation*}
\begin{tikzcd}
0 \arrow{r} &C^{\infty}(\Omega) \arrow{r}{\grad} &C^{\infty}(\Omega;\R^3) \arrow{r}{\curl} & 
C^{\infty}(\Omega;\R^3) \arrow{r}{\div} & C^{\infty}(\Omega)\arrow{r}&0.
 \end{tikzcd}
\end{equation*}
Here $C^{\infty}(\Omega)$ is the usual space of all infinitely differentiable functions on $\Omega$.

Additional analytical properties are captured in a variant of the smooth complex, namely the \emph{Sobolev}
de~Rham complex, which extends the exterior derivatives to less regular differential forms and encodes the fact that they are operators of first order. For any real number $q$, it reads: 
\begin{equation}\label{sobolev-deRham}
\begin{tikzcd}
0 \arrow{r} &H^{q}\Lambda^{0} \arrow{r}{d^{0}} &H^{q-1}\Lambda^{1} \arrow{r}{d^{1}} &  \cdots \arrow{r}{d^{n-1}} & H^{q-n}\Lambda^{n}\arrow{r}&0.
 \end{tikzcd}
\end{equation}
This is a \emph{bounded Hilbert complex}, meaning that the spaces are Hilbert spaces and the operators bounded linear operators.
Further functional analytic structure is encoded in another variant, the
$L^{2}$ de~Rham Hilbert complex where the differential operators are not bounded, but merely closed and densely defined,
\begin{equation}\label{L2-deRham}
\begin{tikzcd}
0 \arrow{r} &L^{2}\Lambda^{0} \arrow{r}{d^{0}} &L^{2}\Lambda^{1} \arrow{r}{d^{1}} &  \cdots \arrow{r}{d^{n-1}} & L^{2}\Lambda^{n}\arrow{r}&0.
 \end{tikzcd}
\end{equation}
 Their domains are defined to be the spaces
$$
H\Lambda^{k}=\left \{ u\in L^{2}\Lambda^{k}: ~d^{k}u\in L^{2}\Lambda^{k+1}\right \}.
$$
Restricting to the domains furnishes yet another variant of the de~Rham complex, a bounded Hilbert complex called the \emph{domain complex} of the $L^2$ de~Rham complex:
\begin{equation}\label{HD-deRham}
\begin{tikzcd}
0 \arrow{r} &H\Lambda^{0} \arrow{r}{d^{0}} &H\Lambda^{1} \arrow{r}{d^{1}} &  \cdots \arrow{r}{d^{n-1}} & H\Lambda^{n}\arrow{r}&0.
 \end{tikzcd}
\end{equation}

The $L^2$ de~Rham complex plays a crucial role in FEEC, and we refer to \cite{arnold2018finite,Arnold.D;Falk.R;Winther.R.2006a} and the references therein for many results related to it.
The Sobolev de~Rham complex  \eqref{sobolev-deRham} and generalizations of it were studied extensively by Costabel and McIntosh in \cite{costabel2010bogovskiui} under rather weak assumptions on the regularity of the domain.  From their results, we may obtain numerous fundamental properties of the de~Rham complex:
\begin{itemize}
\item The complexes \eqref{sobolev-deRham} and \eqref{L2-deRham} are \emph{closed} in the sense that
all the differential operators have closed range.  This is a crucial assumption of the FEEC framework, which implies the Poincar\'e inequality, the Hodge decomposition, and well-posedness of the Hodge Laplacian boundary value problem, among other results.
\item For each of the variant complexes above, the cohomology spaces are finite dimensional and mutually isomorphic.  A common single set of $C^\infty$ cohomology representatives can be chosen.
\item If the domain $\Omega$ is contractible, then each of the complexes has vanishing cohomology except at the level $0$, where the cohomology space is $\R$.  In other words, each complex is locally a resolution
of the constants.
\item Each space in \eqref{HD-deRham} admits a regular decomposition.
\item  The complexes \eqref{L2-deRham} satisfy a compactness property.
\end{itemize}
We will define and discuss these properties in greater detail in Section~\ref{sec:preliminary} below. 

As mentioned above, many problems arising in continuum mechanics and differential geometry require other, more complicated differential complexes, the best known being the \emph{elasticity complex}, also called the {\it Kr\"{o}ner complex} in mechanics or the {\it (linearized) Calabi complex} in geometry. In three space dimensions, the smooth elasticity complex reads
 \begin{equation}\label{sequence:3Delasticity}
\begin{tikzcd}
0 \arrow{r}&  C^{\infty}\otimes \mathbb{V} \arrow{r}{{\deff}} &C^{\infty}\otimes  \mathbb{S}  \arrow{r}{\inc} &   C^{\infty}\otimes \mathbb{S} \arrow{r}{\div} & C^{\infty}\otimes \mathbb{V}\arrow{r}&0.
 \end{tikzcd}
\end{equation}
It is locally a resolution of the rigid motions. Here we write $\mathbb{V}$ for the space $\R^3$ of $3$-vectors
and so $C^{\infty}\otimes\mathbb{V}=C^{\infty}(\Omega)\otimes \mathbb{V}$ is the space of smooth vector fields.  Similarly, $\mathbb{S}$ and $C^{\infty}\otimes \mathbb{S}$ denote the spaces of $3\x3$ symmetric matrices and smooth matrix fields, respectively.
The operators in the elasticity complex are the \emph{deformation} or linearized strain operator $\operatorname{def}=\operatorname{sym}\grad$, the \emph{incompatibility} operator $\inc =\curl\circ \mathrm{T}\circ\curl$ (where $\mathrm{T}$ denotes the transpose operation and $\curl$ acts on a matrix field by rows), and the (row-wise) \emph{divergence} operator operating on matrix fields.  Note that the incompatibility operator is second-order.

The elasticity complex has been crucial to the development of mixed finite element methods for elasticity \cite{arnold2002mixed,arnold2007mixed}.  The incompatibility operator $\inc$ appears in the Saint-Venant condition $\inc e=0$ giving the conditions for a symmetric matrix field $e$ to locally equal the deformation (strain tensor) $\deff u$ of a displacement vector field. It is further utilized in the development of intrinsic elasticity \cite{ciarlet2009intrinsic} where the deformation field replaces the displacement field as the primary unknown. The incompatibility operator is also central to Kr\"{o}ner's pioneering work on dislocation theory \cite{kroner1975dislocations,van2010non}, where $\inc$ applied to a strain tensor measures the density of dislocations. Its application to problems such as elastoplasticity remains an active research area \cite{amstutz2017incompatibility,amstutz2018incompatibility,geymonat2005some}.  The analogy between the operators of the de~Rham complex and those of the elasticity complex has been long noted, going back at least to \cite[Table 1]{seeger1961recent} and \cite{kroner1975dislocations}.

Other complexes combining first and second order differentials arise in other applications, particularly the \emph{Hessian complex} 
 \begin{equation}\label{sequence:hessian}
\begin{tikzcd}
0 \arrow{r}&  C^{\infty} \arrow{r}{{\gg}} &C^{\infty}\otimes  \mathbb{S}  \arrow{r}{\curl} &   C^{\infty}\otimes \mathbb{T} \arrow{r}{\div} & C^{\infty}\otimes \mathbb{V}\arrow{r}&0,
 \end{tikzcd}
\end{equation}
and its formal adjoint, the \emph{$\dd$ complex,}
 \begin{equation}\label{sequence:divdiv}
\begin{tikzcd}[row sep=large]
0 \arrow{r}&  C^{\infty}\otimes \mathbb{V} \arrow{r}{{\dev\grad}} &[11] C^{\infty}\otimes  \mathbb{T}  \arrow{r}{\sym\curl} &[11]  C^{\infty}\otimes \mathbb{S} \arrow{r}{\dd} &[11] C^{\infty}\arrow{r}&0.
 \end{tikzcd}
\end{equation}
Here $\mathbb{T}$ is the space of trace-free matrices, and $\dev$ is the deviatoric operator
which sends a matrix to its trace-free part.  These complexes have been used for plate and other biharmonic problems by Pauly and Zulehner \cite{pauly2016closed,pauly2018divdiv} and have been applied to the Einstein equations by Quenneville-B{\'e}lair \cite{quenneville2015new}.

While the elasticity, Hessian, $\dd$, and other complexes have important applications, there has not been a systematic investigation of their derivations or fundamental properties. For example, the crucial closed range property required to fit the complexes into the FEEC framework has not yet been established in general, nor has the independence of the cohomology on the Sobolev regularity (although a variety of special cases and partial results have appeared \cite{amstutz2016analysis,angoshtari2015differential,ciarlet2013linear,geymonat2006beltrami,horgan1995korn,pauly2016closed,pauly2018divdiv}).
In this paper, we present a systematic way to obtain and analyze such complexes via an algebraic construction presented in Section~\ref{sec:framework} which derives new complexes from existing ones.  The construction is related to the Bernstein--Gelfand--Gelfand (BGG) resolution from the representation of Lie algebras \cite{eastwood1999variations,eastwood2000complex,vcap2001bernstein}, but we shall not rely on that, and instead provide a self-contained presentation.  In Section~\ref{sec:framework}, we present the derivation of the new complex together with two key theorems relating its cohomology to that of the input complexes.  The proofs of these theorems are postponed to Section~\ref{sec:proof}, but first, in Section~\ref{sec:main}, we apply the results of Section~\ref{sec:framework} in numerous ways to obtain a variety of
complexes (elasticity, Hessian, $\dd$,  $\grad\curl$, $\curl\div$, $\grad\div$, conformal elasticity, and conformal Hessian) with a variety of applications. We emphasize that the value of this paper lies not only in the numerous results obtained for numerous complexes in Section~\ref{sec:main}, but also in the systematic approach to obtaining these results from known results for the de~Rham complex and similar complexes.  We believe this BGG-based construction will prove valuable in other contexts, both to extend to other complexes and to obtain additional properties.  An example in this direction is in \cite{christiansen2019poincare} where the BGG approach is used to construct Poincar\'e operators for the elasticity complex from classical Poincar\'e operators based on path integrals for the de~Rham complex.

This paper is focused on the construction and analysis of differential
complexes which relate to important PDE problems from continuum mechanics
and other applications.  However, another important motivation for the work is to
enable the development of stable and accurate \emph{discretization methods} for
solving these problems.  An important conclusion of the finite element exterior
calculus is that a stable finite element method for a problem arising from
a differential complex requires finite element spaces that form a subcomplex
of the original complex and admit a cochain projection from the complex on
the continuous level to that on the discrete level.  The construction of such
finite element spaces has been systematically investigated and achieved for
the de Rham complex.  For the elasticity complex, it was achieved in 2002
when, after attempts going back four decades, the first stable mixed finite
elements for elasticity with polynomial shape functions were discovered
in two dimensions \cite{arnold2002mixed}.  In that work, and particularly in the follow-up work
in \cite{arnold2006a} and \cite{arnold2007mixed}, the construction of finite elements for the elasticity
complex was guided by the corresponding derivation of the elasticity complex
at the continuous level from the de Rham complex, together with the use
of known stable finite element discretizations of the de Rham complex.
This approach has been followed by numerous authors since, such as another
discretization of the 2D elasticity complex obtained  by Christiansen, Hu,
and Hu \cite{christiansen2016nodal} by combining a discrete Stokes complex and an Hermite
finite element discretization of the de~Rham complex.  The current paper
develops the systematic derivation of new complexes from known complexes,
with the derivation of the elasticity complex from the de Rham complex being
one example of many.  Consequently our results should provide guidance for
the development of finite element discretization of these new complexes,
providing stable finite element methods to solve numerous problems for which
they were heretofore unavailable.

We close the introduction by noting that the approach of this paper provides a new way to prove important analytical results such as Korn's inequality.   Korn's inequality is nothing other than the first Poincar\'{e} inequality associated with the elasticity complex, and so follows from the closed range property of that complex, which is established here as a consequence of known properties of the de~Rham complex together with homological algebra.  A stronger, but lesser known inequality, the trace-free Korn's inequality, fits into the same framework and is proved in a similar way, but for a different complex. Cf., Section~\ref{sec:conformal}. Similar observations apply to other operators, such as $\inc$, furnishing more or less familiar inequalities.

\section{Sobolev scales of complexes and their properties}\label{sec:preliminary}

Before continuing, we make precise some terminology and notation.  Suppose we have a two vector spaces
$V$ and $W$ and a linear operator $D$ mapping between them.  Sometimes we will allow for the case
where $D$ is defined only on a subspace of $V$, called its domain, rather than on the whole space.
The kernel of $D$, which we denote by $\ker(D)$ or $\ker(D,V)$, is nevertheless a well-defined
subspace of $V$, and the range of $D$, $\ran(D)$ or $\ran(D,V)$, is a subspace of $W$.
Now suppose that we have a sequence of
vector spaces and linear operators mapping one to the next,
$$
\cdots \to Z^{k-1} \xrightarrow{D^{k-1}} Z^k \xrightarrow{D^k} Z^{k+1} \to \cdots,
$$
allowing for the case in which the domain of $D^k$ is a proper subspace of $Z^k$.
The sequence is called a \emph{complex} if $\ran(D^{k-1})\subset \ker(D^k)$ for each $k$.  Then we can
define the $k$th cohomology space $\mathscr{H}^k$ as the vector space $\ker(D^k)/\ran(D^{k-1})$.  The complex is called a \emph{Hilbert complex}
if the spaces $Z^k$ are Hilbert spaces and the operators $D^k$ are closed and densely defined \cite{bru1992hilbert,glotko2003complex,Arnold.D;Falk.R;Winther.R.2010a,arnold2018finite}.
Note that, for a Hilbert complex, the nullspace $\ker(D^k)$, being the kernel of a closed operator, is a closed subspace of $Z^k$, but the range $\ran(D^{k-1})$ need not be closed.  Therefore the cohomology space $\mathscr{H}^k$
may not be a Hilbert space. In this case it does not coincide with the Hilbert space $\bar{\mathscr{H}}^k:=\ker(D^{k})/\overline{\ran(D^{k-1})}$, which is called the \emph{reduced cohomology space}. If each
$\ran(D^k)$ is closed, we say the Hilbert complex is closed, and then the distinction between cohomology and reduced
cohomology disappears.

Throughout this paper, we assume that $\Omega$ is a bounded Lipschitz domain in $\R^n$.  We shall consider Sobolev spaces of functions (or distributions) on $\Omega$ taking values in a finite dimensional Hilbert space $\mathbb{E}$ (for example, we might have $\mathbb{E} = \R^n$).  We may identify such a vector-valued Sobolev space with a tensor product, and so denote it by $H^{q}(\Omega)\otimes \mathbb{E}$, or just $H^{q}\otimes \mathbb{E}$, where $q\in\R$ is the order of the Sobolev space.  This is a Hilbert space, whose norm we denote by $\|\,\cdot\,\|_{q}$.  In the case $q=0$, i.e., the space $L^2\otimes\mathbb{E}$, we may just write $\|\,\cdot\,\|$. For a (possibly unbounded) linear operator $\mathscr{D}$ which maps from one such $L^2$ space to another, we may use the graph norm given by $\|u\|_{\mathscr{D}}^{2}:=\|u\|^{2}+\|\mathscr{D}u\|^{2}$.  We write $d^k$ for the exterior derivative operator from $k$-forms to $(k+1)$-forms (it vanishes for $k<0$ or $k>n-1$).

The Sobolev--de~Rham complex \eqref{sobolev-deRham} depends on the Sobolev order $q$, and so is actually a scale of complexes, by which we mean a family of complexes
\begin{equation}\label{scale}
\cdots\to Z^{k}_{[q]} \xrightarrow{D^{k}_{[q]}} Z^{k+1}_{[q]} \to \cdots
\end{equation}
indexed by a parameter $q\in\R$, such that if $q'\ge q$, then
$$
Z^{k}_{[q']}\subset Z^{k}_{[q]}
\text{\quad and\quad}
D^{k}_{[q']}=D^{k}_{[q]}|_{Z^{k}_{[q']}}.
$$
In this paper we will derive numerous scales of complexes of the form
\begin{equation}\label{sobscale}
\cdots\to H^{q_k}\otimes \W^k \xrightarrow{  D^{k}} H^{q_{k+1}}\otimes \W^{k+1} \to \cdots,
\end{equation}
The spaces
are vector-valued Sobolev spaces of the form $Z^k_{[q]}=H^{q_k}\otimes \W^k$ where the $\W^k$ are
finite dimensional inner product spaces and the differentials $  D^k$ are linear
differential operators of some positive real order $\gamma_k\geq 1$.  The real numbers $q_k$ are given by $q_0=q$ and $q_{k+1}=q_k-\gamma_k$.

For such a Sobolev scale of complexes there is an $L^2$ Hilbert complex variant, just as for the de~Rham complex.
The complex is
\begin{equation}\label{L2W}
\cdots \to L^2\otimes\W^k \xrightarrow{  D^{k}}L^2\otimes\W^{k+1} \to \cdots,
\end{equation}
where now $D^k$ is a differential operator with constant (or more generally, smooth) coefficients defined in the sense of distributions and viewed as a closed unbounded operator with domain
$$
H\W^k:= \{\, u\in L^2\otimes\W^k\,|\,   D^ku \in L^2\otimes\W^{k+1}\,\}.
$$
This operator is indeed closed (c.f., \cite[Section 6.2.6]{arnold2018finite}). It is densely defined because $C^{\infty}_0(\Omega)\otimes \mathbb{W}^{k}$ is dense in $L^2\otimes\mathbb{W}^k$. This leads to the following $L^{2}$ domain complex 
\begin{equation}\label{domainW}
\cdots \to H\W^k \xrightarrow{  D^{k}}H\W^{k+1} \to \cdots.
\end{equation}

In many important cases, at each level $k$, the cohomology of the complexes in the scale can be represented by a single  set of smooth functions, independent of $q$.

\begin{definition}\label{urc}
A sequence of finite-dimensional spaces $G^k_\infty\subset L^2(\Omega)\otimes \W^k$ is said to \emph{uniformly represent
the cohomology} of a scale of complexes \eqref{scale} if, for each $k\in\Z$ and each $q\in\R$,
\begin{align}\label{NRG}
\ker(  D^k,Z^k_{[q]}) = \ran(  D^{k-1},Z^{k-1}_{[q]}) \oplus G^k_\infty.
\end{align}
\end{definition}
Note that, in case the scale of complexes is of the form \eqref{sobscale}, then, by definition, the space $G^k_\infty$ belongs to  all the Sobolev spaces $H^q\otimes \W^k$, so it is contained
in $C^\infty$. In the rest of this discussion we assume that there exists a uniform representation of the cohomology for the Sobolev scale \eqref{sobscale}.

Almost all the complexes we treat will be closed (recall that this means that the range space $\ran(D^{k-1}, Z^{k-1})$ is closed in $Z^k$ for each $k$). In particular if the cohomology is finite dimensional, then the range space is closed (\cite[Lemma 19.1.1]{hormander1994analysis}). 
Moreover, if a scale of Sobolev complexes has a uniform representation of cohomology, then, for each $q$, the complex has finite dimensional cohomology and so is closed. 

Since $H^{\gamma_{k-1}}\otimes\W^{k}\subset H\W^{k}\subset L^{2}\otimes\W^{k}$, the cohomology of the $L^{2}$ complex \eqref{L2W} can be represented by the same representatives as the Sobolev complex \eqref{sobscale}. Specifically, we have the following result.
\begin{theorem}\label{thm:elasticity-Hilbert-complex}
Suppose that the scale of complexes \eqref{sobscale} admits a uniform set of cohomology representatives $G^k_\infty$.  Then the same spaces are cohomology representatives for the domain complex \eqref{domainW} as well:
$$
\ker\left ( {D}^{k}, H\W^{k}\right )=\ran({D}^{k-1}, H\W^{k-1})\oplus {G}_{\infty}^{k}.
$$
\end{theorem}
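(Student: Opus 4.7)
The plan is to verify the decomposition in two halves, applying the uniform representation \eqref{NRG} at two different Sobolev indices $q$ chosen to sandwich $L^2$. One inclusion is immediate from the definitions; the real work is the other inclusion and the directness of the sum. The inclusion $\ran(D^{k-1}, H\W^{k-1}) + G^k_\infty \subseteq \ker(D^k, H\W^k)$ is direct: elements of $G^k_\infty$ are smooth, lie in the kernel of $D^k$, and hence in $H\W^k$; the complex property handles the range term.

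First I would establish the reverse inclusion $\ker(D^k, H\W^k) \subseteq \ran(D^{k-1}, H\W^{k-1}) + G^k_\infty$. Given $u \in \ker(D^k, H\W^k)$, I choose the Sobolev parameter $q = \sum_{i=0}^{k-1}\gamma_i$, so that $q_k = 0$ and $H^{q_k}\otimes\W^k = L^2\otimes\W^k$, while simultaneously $q_{k-1} = \gamma_{k-1}$, placing the previous Sobolev space inside $L^2\otimes\W^{k-1}$. At this index, \eqref{NRG} yields $u = D^{k-1}v + g$ with $v \in H^{\gamma_{k-1}}\otimes\W^{k-1} \subseteq L^2\otimes\W^{k-1}$ and $g \in G^k_\infty$, and since $D^{k-1}v = u - g \in L^2\otimes\W^k$, the potential $v$ lies in the $L^2$-domain $H\W^{k-1}$.

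Next I would verify that the sum is direct, i.e., $G^k_\infty \cap \ran(D^{k-1}, H\W^{k-1}) = \{0\}$. Here I swap to the complementary choice $q = \sum_{i=0}^{k-2}\gamma_i$, which makes $q_{k-1} = 0$ (so $L^2\otimes\W^{k-1} = H^{q_{k-1}}\otimes\W^{k-1}$) and $q_k = -\gamma_{k-1}$. If $g \in G^k_\infty$ satisfies $g = D^{k-1}v$ for some $v \in H\W^{k-1}\subseteq L^2\otimes\W^{k-1}$, then $v$ lies in the Sobolev-level domain of $D^{k-1}$, and $g$ is smooth enough to lie in $H^{-\gamma_{k-1}}\otimes\W^k$. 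Thus $g \in \ran(D^{k-1}, H^{q_{k-1}}\otimes\W^{k-1}) \cap G^k_\infty$, and the direct sum in \eqref{NRG} at this $q$ forces $g = 0$.

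I expect no serious obstacle. The only conceptual step is noticing that the Sobolev parameter $q$ is free to be chosen large (to bring $L^2\otimes\W^k$ inside the Sobolev-complex range space) or small (to bring $L^2\otimes\W^{k-1}$ inside the Sobolev-complex domain space). No elliptic regularity of $v$ is invoked; the smoothness of elements of $G^k_\infty$ carries the argument at both ends, since all assertions reduce to two separate invocations of \eqref{NRG}.
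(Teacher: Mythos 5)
Your proof is correct and mirrors the paper's argument: the key move, applying the uniform representation \eqref{NRG} at the Sobolev level chosen so that $q_k=0$ (hence $Z^k_{[q]}=L^2\otimes\W^k$) and then enlarging $H^{\gamma_{k-1}}\otimes\W^{k-1}$ to $H\W^{k-1}$, is exactly what the paper does in its one-line chain of inclusions. Where you go a bit further is in making the directness of the sum explicit by a second application of \eqref{NRG} at the index with $q_{k-1}=0$, which pins down $\ran(D^{k-1}, H\W^{k-1})\cap G^k_\infty=\{0\}$ — a point the paper asserts via the $\oplus$ symbol but does not spell out, and which does not follow from the set-equality of the two sums alone.
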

\begin{proof}
We have 
\begin{align*}
\ker\left ( {D}^{k}, H\W^{k}\right )=\ker&\left ( {D}^{k}, L^{2}\otimes \W^{k}\right )=\ran({D}^{k-1}, H^{\gamma_{k-1}}\otimes \W^{k-1}  )\oplus {G}_{\infty}^{k}\\&
\subset \ran({D}^{k-1}, H\W^{k-1} )\oplus {G}_{\infty}^{k}\subset \ker\left ( {D}^{k}, H\W^{k}\right ),
\end{align*}
where the first equality is by definition. This implies the result.
\end{proof}

From the fact that the complex is closed we may derive numerous consequences.

\subsubsection*{Hodge decomposition}
An important consequence of the closedness of the range of $D$ is the Hodge decomposition. 
Let $  D_{k}^{\ast}: L^{2}\otimes\W^{k}\to L^{2}\otimes\W^{k-1}$ be the adjoint operator of the unbounded operator $  D^{k-1}: L^{2}\otimes\W^{k-1}\to L^{2}\otimes\W^{k}$ associated with \eqref{L2W}. We denote the  domain of the adjoint by $H^{\ast}\W^{k}$.Recall that for any densely defined linear operator between Hilbert spaces $T: X\to Y$, the adjoint of $T$ is defined to be an unbounded operator with the domain
$$
D(T^{\ast}):=\{w\in Y: \exists c_{w}>0 ~s.t.~ |\langle w, Tv\rangle_{Y}|\leq c_{w}\|v\|_{X}, ~\forall v\in D(T)\}.
$$ 
In specific examples, it consists of forms $u\in L^{2}\otimes\W^{k}$ for which the formal adjoint $  D_k^{\ast} u\in L^{2}\otimes\W^{k-1}$ and which satisfy certain boundary conditions.  See \cite[Theorem 6.5]{arnold2018finite} for the case of the de~Rham complex.
With this notation, the Hodge decomposition is easily derived.  Let
$$
\mathfrak{H}^{k}= \{\,u\in \ker(  D^{k})\,|\, u\perp\ran(  D^{k-1}) \,\}
$$
denote the harmonic forms for this Hilbert complex.  Then we have
\begin{equation}\label{Hodge}
L^2\otimes\W^k = \ker(  D^k) \oplus \ker(  D^k)^\perp
=  \ker(  D^k) \oplus \ran(  D^*_{k+1}) = \ran(  D^{k-1})\oplus \mathfrak{H}^{k} \oplus \ran(  D^*_{k+1}),
\end{equation}
which is the Hodge decomposition in this context.  Besides the definitions, we have used duality and the closed range theorem, which ensure that the orthogonal complement of the kernel of an operator with closed range coincides with the range of its adjoint.

\subsubsection*{Poincar\'{e} inequality}
Another important consequence of the closed range property is the Poincar\'{e} inequality, from which it follows by Banach's theorem. For \eqref{L2W}, the Poincar\'{e} inequality reads: 
$$
\left\|u\right\|\lesssim \left\| {D}^{k}u\right\|, \quad \forall u\in H\W^{k}, \, u\perp_{L^{2}} \ker\left (  {D}^{k}, H\W^{k}\right ),
$$
and for \eqref{sobolev-deRham}:
\begin{align}\label{poincare-Hs}
\left\|u\right\|_{q_{k}}\lesssim \left\| {D}^{k}u\right\|_{q_{k+1}}, \quad \forall u\in H^{q_{k}}\otimes \W^{k}, \, u\perp_{H^{q_{k}}} \ker\left ( {D}^{k}, H^{q}\otimes \W^{k}\right ).
\end{align}
We write $a\lesssim b$ to mean $a\leq Cb$ for some generic constant $C$.

\subsubsection*{Well-posed Hodge-Laplacian boundary value problem}
The Hodge decomposition and Poincar\'{e} inequalities then imply the well-posedness of the Hodge-Laplacian boundary value problems associated with the $L^{2}$ domain complex \eqref{domainW}, up to harmonic forms. We refer to \cite[Section~4.4.2]{arnold2018finite} for the proof.

\bigskip

The preceding properties were all deduced from the fact that the cohomology of the $L^{2}$ domain complex \eqref{domainW} 
is finite dimensional.  Now we present three more important properties that require in addition the existence of a uniform representation of  cohomology of \eqref{sobscale}.

\subsubsection*{Existence of regular potentials}

\begin{theorem}[Existence of bounded regular potentials]\label{thm:regpot}
Let $q,r\in\R$, $k\in\Z$.  There is a constant $C$ such that
for any $v\in H^q\otimes\W^{k+1}\cap \ran (  D^{k}, H^r\otimes\W^{k})$, there exists $u\in H^{q+\gamma_{k}}\otimes \W^{k}$ such that $ {D}^{k}u=v$ and
\begin{align}\label{bounded-potential}
\|u\|_{q+\gamma_{k}}\leq C\|v\|_{q}. 
\end{align}
\end{theorem}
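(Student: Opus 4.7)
The plan is to combine the uniform cohomology representation (to obtain a potential of the right regularity), uniqueness of the associated direct-sum decomposition (to kill the cohomology component), and the Sobolev-scale Poincar\'e inequality \eqref{poincare-Hs} (to get the quantitative bound).

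First I would observe that the hypothesis $v\in\ran(D^{k},H^{r}\otimes\W^{k})$ implies $v=D^{k}w$ for some $w\in H^{r}\otimes\W^{k}$; applying $D^{k+1}$ and using $D^{k+1}D^{k}=0$ shows $D^{k+1}v=0$. Combined with $v\in H^{q}\otimes\W^{k+1}$, this gives
$$
v\in\ker\bigl(D^{k+1},H^{q}\otimes\W^{k+1}\bigr).
$$
Next I would shift the scale index of \eqref{sobscale} so that $\W^{k+1}$ sits at Sobolev level $q$ (then $\W^{k}$ sits at level $q+\gamma_{k}$, by the recursion $q_{k+1}=q_{k}-\gamma_{k}$). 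Applying the uniform cohomology representation (Definition~\ref{urc}) at that shifted scale yields
$$
\ker\bigl(D^{k+1},H^{q}\otimes\W^{k+1}\bigr)=\ran\bigl(D^{k},H^{q+\gamma_{k}}\otimes\W^{k}\bigr)\oplus G^{k+1}_{\infty}.
$$
In particular, $v=D^{k}u_{0}+g$ for some $u_{0}\in H^{q+\gamma_{k}}\otimes\W^{k}$ and $g\in G^{k+1}_{\infty}$.

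To eliminate $g$ I would exploit the \emph{second} potential $w$. Subtracting the two representations $v=D^{k}u_{0}+g=D^{k}w$ gives $g=D^{k}(w-u_{0})$, and since $w\in H^{r}$ and $u_{0}\in H^{q+\gamma_{k}}$, this element lies in $\ran(D^{k},H^{s}\otimes\W^{k})$ where $s:=\min(r,q+\gamma_{k})$. Applying Definition~\ref{urc} once more at the scale for which $\W^{k}$ sits at level $s$ shows that $\ran(D^{k},H^{s}\otimes\W^{k})$ and $G^{k+1}_{\infty}$ have trivial intersection, so $g=0$. Hence $v=D^{k}u_{0}$ with $u_{0}\in H^{q+\gamma_{k}}\otimes\W^{k}$.

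Finally, to upgrade $u_{0}$ to a potential with the claimed bound, I would project $u_{0}$ in the $H^{q+\gamma_{k}}$ inner product onto $\ker(D^{k},H^{q+\gamma_{k}}\otimes\W^{k})^{\perp}$, obtaining $u$ with $D^{k}u=v$ and $u\perp_{H^{q+\gamma_{k}}}\ker(D^{k},H^{q+\gamma_{k}}\otimes\W^{k})$. Since the Sobolev complex at this shifted scale has finite-dimensional cohomology (represented by $G^{\bullet}_{\infty}$) and is therefore closed, the Poincar\'e inequality \eqref{poincare-Hs} applies at level $q+\gamma_{k}$ and yields
$$
\|u\|_{q+\gamma_{k}}\lesssim\|D^{k}u\|_{q}=\|v\|_{q},
$$
which is \eqref{bounded-potential}. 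The main obstacle I anticipate is purely bookkeeping: choosing the correct shifted scale index so that the uniform cohomology representation produces potentials at exactly the Sobolev level $q+\gamma_{k}$, and formulating the direct-sum uniqueness argument so that both decompositions of $v$ live in a common scale where $\ran(D^{k})\cap G^{k+1}_{\infty}=\{0\}$ can be invoked.
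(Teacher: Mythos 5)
Your proposal is correct and follows essentially the same route as the paper's own proof: identify $v$ as lying in $\ker(D^{k+1},H^{q}\otimes\W^{k+1})$, apply the uniform cohomology representation at the shifted scale to write $v=D^{k}u_{0}+g$, use the second potential $w$ together with the directness of the sum in \eqref{NRG} to conclude $g=0$, then project off the kernel and invoke the Sobolev Poincar\'e inequality \eqref{poincare-Hs}. The only difference is stylistic (you subtract the two representations to exhibit $g=D^{k}(w-u_{0})$ explicitly, whereas the paper phrases this as $s\in G^{k+1}_{\infty}\cap\ran(D^{k},H^{t}\otimes\W^{k})$ with $t=\min(q+\gamma_{k},r)$); the content is identical.
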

\begin{proof}
By the assumptions on $v$, it belongs to $\ker(D^{k+1}, H^{q}\otimes \W^{k+1})$.  Now, from the uniform representation of cohomology applied to the sequence
\begin{equation*}
H^{q+\gamma_{k}}\otimes \W^{k} \xrightarrow{D^k} H^q\otimes \W^{k+1} \xrightarrow{D^{k+1}} H^{q-\gamma_{k+1}}\otimes \W^{k+2},
\end{equation*}
we have $\ker(D^{k+1}, H^{q}\otimes \W^{k+1}) = \ran(D^k,H^{q+\gamma_{k}}\otimes \W^{k}) + G^{k+1}_\infty$, so there exists $u\in H^{q+\gamma_{k}}\otimes \W^{k}$ and $s\in G^{k+1}_\infty$ for which $v=Du+s$.  But $v\in \ran( {D}^k, H^r\otimes\W^{k})$ and $Du\in \ran( {D}^k, H^{q+\gamma_k}\otimes\W^{k})$, so $s\in G^{k+1}_\infty \cap \ran( {D}^k, H^t\otimes\W^{k})$ with $t=\min(q+\gamma_k,r)$.  But the last space reduces to zero, since the sum in \eqref{NRG} is direct. Further, we may subtract from $u$ its projection onto $\ker(D^k, H^{q+\gamma_{k}}\otimes \W^{k})$ without changing $D^ku$.
Then $u$ is the desired regular potential and the bound \eqref{bounded-potential} is immediate from Poincar\'e inequality \eqref{poincare-Hs}. 
\end{proof}

\subsubsection*{Regular decomposition} 
The regular decomposition of the de~Rham complex and its discrete version have various applications in numerical analysis, see, e.g., \cite{hiptmair2017discrete} and the references therein.
A classical proof of the regular decomposition relies on the Fourier analysis and extensions of vector fields \cite{hiptmair2012universal}. However, we now show that regular decompositions for the more general complex \eqref{domainW} can be deduced directly from the Hodge decomposition of the $L^{2}$ complex \eqref{L2W} and the existence of regular potentials. 

\begin{theorem}\label{thm:regular}
The regular decomposition holds:
\begin{align}\label{regular-decomposition}
H\W^{k}=  {D}^{k-1}\left ( H^{\gamma_{k-1}}\otimes \W^{k-1}\right )+H^{\gamma_{k}}\otimes \W^{k}.
\end{align}
\end{theorem}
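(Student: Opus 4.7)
The plan is to reduce the regular decomposition to two ingredients that are already at our disposal: the existence of a bounded regular potential from Theorem~\ref{thm:regpot}, and the uniform representation of cohomology \eqref{NRG} applied at the Sobolev index that corresponds to the $L^{2}$ level. Given $u\in H\W^{k}$, the complex property gives $D^{k+1}(D^{k}u)=0$, so $D^{k}u$ lies in $\ker(D^{k+1},L^{2}\otimes\W^{k+1})$ and trivially in $\ran(D^{k},L^{2}\otimes\W^{k})$. I would then invoke Theorem~\ref{thm:regpot} with $q=0$ (taking $r=0$ to certify that $D^{k}u$ really is in the range) to produce $v\in H^{\gamma_{k}}\otimes\W^{k}$ satisfying $D^{k}v=D^{k}u$.

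The difference $u-v$ then lies in $\ker(D^{k},L^{2}\otimes\W^{k})$. To identify this kernel with something built out of $H^{\gamma_{k-1}}\otimes\W^{k-1}$, I would choose the scale parameter $q=\gamma_{0}+\cdots+\gamma_{k-1}$, so that $q_{k}=0$ and $q_{k-1}=\gamma_{k-1}$, and then apply \eqref{NRG}:
$$
\ker(D^{k},L^{2}\otimes\W^{k})=\ran(D^{k-1},H^{\gamma_{k-1}}\otimes\W^{k-1})\oplus G^{k}_{\infty}.
$$
This yields $u-v=D^{k-1}w+s$ for some $w\in H^{\gamma_{k-1}}\otimes\W^{k-1}$ and some $s\in G^{k}_{\infty}$. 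Since $G^{k}_{\infty}\subset C^{\infty}\otimes\W^{k}$, the representative $s$ automatically lies in $H^{\gamma_{k}}\otimes\W^{k}$, so the rearrangement $u=D^{k-1}w+(v+s)$ provides the required decomposition.

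There is no serious analytic obstacle once the regular potential theorem and the uniform cohomology representation are invoked; the argument is essentially a bookkeeping exercise. The only delicate point is the alignment of Sobolev indices: one must choose $q$ so that $q_{k}=0$ in order that the kernel at the $L^{2}$ level be decomposed using potentials of exactly the regularity $\gamma_{k-1}$ demanded by the statement. The smooth cohomology piece $s$ raises no further regularity issues because $G^{k}_{\infty}$ consists of $C^{\infty}$ functions, hence embeds continuously into every Sobolev space appearing in the decomposition.
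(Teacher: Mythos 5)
Your proof is correct and follows essentially the same route as the paper: first use Theorem~\ref{thm:regpot} to produce a regular potential $v$ with $D^k v = D^k u$, then place $u-v$ in $\ker(D^k, L^2\otimes\W^k)$ and invoke the uniform representation of cohomology \eqref{NRG} to split it as $D^{k-1}w + s$ with $w$ of regularity $\gamma_{k-1}$ and $s$ smooth. Your additional remarks on the choice of scale parameter $q$ to align $q_k=0$ and $q_{k-1}=\gamma_{k-1}$ are a welcome clarification of a step the paper leaves implicit, but they do not change the argument.
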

\begin{proof}
Let $w\in H \W^{k}$. Applying Theorem~\ref{thm:regpot} to $v= {D}^{k}w$, we obtain $u\in H^{\gamma_{k}}\otimes \W^{k}$ such that $ {D}^{k}u= {D}^{k} w$. Then $w-u\in\ker(D^k, L^2\otimes\W^k)$, so, by uniform representation of cohomology, there exists $y\in H^{\gamma_{k-1}}\otimes \W^{k-1}$ and $s\in G^{k}_\infty\otimes \W^{k}$ such that $w - u= {D}^{k-1}y +s$. Then $w= {D}^{k-1}y+(u + s)$ provides a regular decomposition for $u$.
\end{proof}

\subsubsection*{Compactness property}
The space $H\W^{k}\cap H^{\ast}\W^{k}$, i.e., the intersection of the domains of $D$ and $D^*$,
is a Hilbert space with the norm $u\mapsto (\|u^{n}\|+\|{D}^{k}u^{n}\|+\|{D}^{\ast}_{k}u^{n}\|)^{1/2}$.
Its inclusion into $L^2\otimes\W^k$ is obviously continuous.  The compactness property states that the inclusion is in fact compact.
\begin{theorem}\label{thm:compactness}
The imbedding $H\W^{k}\cap H^{\ast}\W^{k}\hookrightarrow L^{2}\otimes \W^{k}$ is compact. 
\end{theorem}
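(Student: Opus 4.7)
My plan is to mimic the classical Picard--Weber--Weck compactness argument for the $L^2$ de~Rham complex, adapted to the present abstract setting. The three ingredients are already available: the Hodge decomposition \eqref{Hodge} of the $L^2$ Hilbert complex \eqref{L2W}, the bounded regular potentials of Theorem~\ref{thm:regpot}, and the Rellich--Kondrachov theorem, which since $\gamma_k\ge 1$ and $\Omega$ is a bounded Lipschitz domain gives the compact embedding $H^{\gamma_k}(\Omega)\otimes\W^k\hookrightarrow L^2(\Omega)\otimes\W^k$.

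Take an arbitrary bounded sequence $\{u_n\}\subset H\W^k\cap H^*\W^k$. By \eqref{Hodge}, decompose $u_n = b_n + h_n + c_n$ with $b_n\in\ran(D^{k-1})$, $h_n\in\mathfrak{H}^k$, and $c_n\in\ran(D^*_{k+1})$, the three components being pairwise $L^2$-orthogonal. A short verification using $D^k D^{k-1}=0$, $D^*_k D^*_{k+1}=0$, and $\mathfrak{H}^k\subset\ker D^k\cap\ker D^*_k$ shows each component separately lies in $H\W^k\cap H^*\W^k$ and satisfies $D^k b_n=0$, $D^*_k c_n=0$, $D^*_k b_n=D^*_k u_n$, and $D^k c_n=D^k u_n$; in particular, each of the three subsequences is bounded in the graph norm of $H\W^k\cap H^*\W^k$. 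Since $\mathfrak{H}^k$ is finite dimensional, $h_n$ admits a trivially convergent subsequence.

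For the exact part $b_n$, Theorem~\ref{thm:regpot} furnishes $\phi_n\in H^{\gamma_{k-1}}\otimes\W^{k-1}$ with $D^{k-1}\phi_n=b_n$ and $\|\phi_n\|_{\gamma_{k-1}}\lesssim\|b_n\|\lesssim\|u_n\|$. By Rellich, pass to a subsequence with $\phi_n$ convergent in $L^2$. The integration-by-parts identity
$$
\|b_n-b_m\|^2 \,=\, \langle b_n-b_m,\,D^{k-1}(\phi_n-\phi_m)\rangle \,=\, \langle D^*_k u_n - D^*_k u_m,\,\phi_n-\phi_m\rangle,
$$
valid because $b_n-b_m\in\mathrm{dom}(D^*_k)$ and $D^*_k b_n=D^*_k u_n$, shows $\|b_n-b_m\|^2\lesssim\|\phi_n-\phi_m\|\to 0$, so $b_n$ is Cauchy in $L^2$. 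The coexact part $c_n$ is treated by the entirely parallel argument after the roles of $D$ and $D^*$ are exchanged.

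The main obstacle is precisely this last step: the excerpt develops the Sobolev-scale theory, and in particular Theorem~\ref{thm:regpot}, only for the primal differentials $D^k$, whereas for $c_n$ one needs an analogous regular potential $\psi_n$ with $D^*_{k+1}\psi_n=c_n$ and $\psi_n$ lying in a Sobolev space that embeds compactly in $L^2$. To supply this, I would first observe that the $L^2$ adjoint complex is again a closed Hilbert complex whose cohomology is naturally identified with $\mathfrak{H}^{\bullet}$ via Hodge theory; since these harmonic representatives are smooth in the interior by elliptic regularity of the associated Hodge Laplacian (Theorem~\ref{thm:regpot} already encodes the structural properties needed), a verbatim repetition of the proof of Theorem~\ref{thm:regpot}, with $D$ replaced by $D^*$ and the primal uniform cohomology representatives replaced by their Hodge duals, yields the required regular potentials for the adjoint. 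Once this is in place, the argument for $c_n$ closes exactly as for $b_n$, and the proof is complete.
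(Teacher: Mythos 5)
Your proof takes essentially the same route as the paper's: Hodge-decompose $u_n$ into exact, harmonic, and coexact parts; dispose of the harmonic part via finite-dimensionality of $\mathfrak H^k$; for the exact part, invoke the bounded regular potential (Theorem~\ref{thm:regpot}), apply Rellich, and close via the duality identity $\|b_n-b_m\|^2 = \langle D^*(u_n-u_m), \phi_n-\phi_m\rangle$; then treat the coexact part by the dual argument. The algebraic verifications ($D^*_k b_n = D^*_k u_n$, etc.) are correct and the structure is identical to the paper.

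Where you part ways is instructive. You correctly and explicitly flag that the coexact part requires bounded regular potentials for the adjoint differential $D^*$, and that these are not literally supplied by Theorem~\ref{thm:regpot}, which is stated only for the primal Sobolev scale. The paper elides this with the terse phrase ``by a completely analogous argument applied to the dual Hilbert complex (which is also closed).'' Your candor here is a strength. However, the patch you offer is not quite the right mechanism: the existence of regular potentials for the adjoint does not follow from the closedness of the dual complex plus interior elliptic regularity of the Hodge Laplacian alone. What is actually needed is a uniform Sobolev-scale cohomology representation for the dual complex in the sense of Definition~\ref{urc} — for the de~Rham case this means the Costabel--McIntosh result for the complex with essential boundary conditions (which they do prove, via the Bogovski\u{\i} operator rather than the Poincar\'e operator), and for the derived complexes it means running the entire Section~\ref{sec:framework} machinery again on the dual diagram. ``Replacing $H^\bullet$ by Hodge duals'' in the proof of Theorem~\ref{thm:regpot} does not produce a Sobolev-scale statement, because the projection onto the kernel and the Poincar\'e inequality \eqref{poincare-Hs} used there live in $H^{q_k}$, not $L^2$, and neither transfers by Hodge duality. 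So: same gap as the paper, more honestly named, but the suggested repair, as stated, would not hold up to scrutiny; the correct fix is to observe that the dual complex is itself of the type governed by the framework and to cite (or construct) its own uniform cohomology representation.
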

The classical proof of the compactness property for the de~Rham complex is due to Picard \cite{picard1984elementary}. Here we provide a proof for general complex \eqref{L2W} based on the existence of regular potentials and the classical Rellich compactness theorem for $H^{1}$ scalar functions.  A similar proof can be found in \cite[Lemma 3.19]{pauly2016closed}.

\begin{proof}
Let $\{u^{n}\}$ be a bounded sequence in $H \W^{k}\cap H^{\ast} \W^{k}$, so $\|u^{n}\|+\|{D}^{k}u^{n}\|+\|{D}^{\ast}_{k}u^{n}\|$ is bounded. We must show that there exists a convergent subsequence in $L^{2}\otimes \W^{k}$.  Expanding $u^n$ by the Hodge decomposition, we have
\begin{equation}\label{hd}
u^n = v^n + w^n + h^n,
\end{equation}
where the sequences $\{v^n\}$, $\{w^n\}$, and $\{h^n\}$ belong to the spaces $\ran(D^{k-1}, H\W^{k-1})$, $\ran(D^*_{k+1}, H^*\W^{k+1})$, and $\mathfrak H^k$, respectively, and are $L^2$ bounded.  We shall show that each of these sequences admits a convergent subsequence, giving the theorem.  This is
certainly true for the $h^n$ sequence, since $\dim\mathfrak H^k<\infty$.

To show that $\{v^n\}$ has a convergent subsequence we introduce a regular potential $y^n\in H^{\gamma_{k-1}}\otimes\W^{k-1}$ with
$Dy^n = v^n$ and with the $y^n$ uniformly bounded in $H^{\gamma_{k-1}}$.  We can then apply the Rellich compactness theorem to obtain 
a subsequence, which we continue to denote $y^n$, which converges in $L^2$. Now, from \eqref{hd}, we see that $v^n\in H^*\W^k$ and $D^*v^n =
D^*u^n$ is $L^2$-bounded uniformly in $n$.  Thus
$$
\left\|v^{m}-v^{n}\right\|^{2}= ({D}^{k-1}(y^m-y^n),  v^{m}-v^{n} )= (y^m-y^n,  {D}^*(v^{m}-v^{n}) )
$$
which tends to zero as $m,n\to\infty$, since $\{y^n\}$ is Cauchy in $L^2$ and $\{D^*v^n\}$ is $L^2$-bounded.

By a completely analogous argument applied to the dual Hilbert complex (which is also closed), we find a convergent subsequence of $\{w^n\}$, and so complete the proof.
\end{proof}

\begin{remark}
From the compactness results and the other properties, one can derive generalized div-curl lemmas which may be applied to nonlinear problems. Cf.~Pauly \cite{pauly2019global}. 
\end{remark}

In this section we have seen that if a Sobolev scale of complexes in the form \eqref{sobscale} admits a uniform representation of cohomology in the sense of Definition~\ref{urc}, then it possesses all the numerous properties discussed above.  As a primary example we have the de~Rham complex.
In \cite{costabel2010bogovskiui}, Costabel and McIntosh investigated the Sobolev de~Rham complex \eqref{sobolev-deRham} on general Lipschitz domains and established the uniform representation of cohomology. Their primary tools were regularized  path integrals of Poincar{\'e} and Bogovski{\u\i}, which provide a contracting homotopy
of the exterior derivatives, which they showed are pseudo\-differential operators of order $-1$.
\begin{theorem}[Costabel and McIntosh]\label{thm:CM}
On any bounded Lipschitz domain in $\R^{n}$ and for any real number $q$, the cohomology of the Sobolev de~Rham complex \eqref{sobolev-deRham} has finite dimension independent of $q$. Moreover, the cohomology can be represented by smooth functions, again independent of $q$.  In other words, there exists a finite-dimensional space $H_{\infty}^{k}\subset C^{\infty}\Lambda^{k}$ such that
\begin{equation}\label{uniform-deRham}
\ker(d^{k}, H^q\Lambda^k)=\ran(d^{k-1}, H^{q+1}\Lambda^{k-1})\oplus H_{\infty}^{k}, \quad  q\in\R,\ 0\leq k\leq n.
\end{equation}
\end{theorem}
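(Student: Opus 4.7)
The plan is to follow the classical strategy of Costabel and McIntosh, whose main ingredient is a contracting homotopy for the de~Rham complex built from regularized Poincar\'{e} and Bogovski\u{\i} integral operators.  On a star-shaped Lipschitz domain, the classical Poincar\'{e} operator integrates a form along rays emanating from a fixed base point; the regularization replaces evaluation at the base point by averaging against a smooth compactly supported density, producing operators $R^{k}$ that Costabel and McIntosh identify as pseudo-differential operators of order $-1$, hence bounded $H^{q}\Lambda^{k}\to H^{q+1}\Lambda^{k-1}$ for every $q\in\R$.  A partition-of-unity argument, writing an arbitrary bounded Lipschitz $\Omega$ as a finite union of star-shaped Lipschitz pieces, extends the construction to the general setting and yields operators on $\Omega$ satisfying the homotopy identity
\[
  d^{k-1}R^{k}u + R^{k+1}d^{k}u = u - K^{k}u,
\]
where $K^{k}\colon H^{q}\Lambda^{k}\to C^{\infty}\Lambda^{k}$ has finite rank and is independent of $q$.

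Granting this construction, I would define $H_{\infty}^{k}$ to be any complement of $\ran(d^{k-1},C^{\infty}\Lambda^{k-1})$ inside $\ker(d^{k},C^{\infty}\Lambda^{k})$; its finite dimensionality follows from the fact that a bounded Lipschitz domain has the homotopy type of a finite CW complex and therefore has finite-dimensional de~Rham cohomology.  For the existence half of \eqref{uniform-deRham}, take $u\in\ker(d^{k},H^{q}\Lambda^{k})$.  The homotopy identity gives $u = d^{k-1}(R^{k}u) + K^{k}u$, with $R^{k}u\in H^{q+1}\Lambda^{k-1}$ and $K^{k}u\in C^{\infty}\Lambda^{k}$, and $K^{k}u$ is closed since $u$ is.  By the definition of $H_{\infty}^{k}$, we can write $K^{k}u = d^{k-1}w + h$ with $w\in C^{\infty}\Lambda^{k-1}$ and $h\in H_{\infty}^{k}$, so $u = d^{k-1}(R^{k}u + w) + h$ exhibits the required decomposition.

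The main obstacle is directness of the sum.  Suppose $h\in H_{\infty}^{k}$ satisfies $h = d^{k-1}v$ for some $v\in H^{q+1}\Lambda^{k-1}$; we must conclude $h=0$.  I would apply the homotopy one degree lower to $v$:
\[
  v = d^{k-2}(R^{k-1}v) + R^{k}(d^{k-1}v) + K^{k-1}v = d^{k-2}(R^{k-1}v) + R^{k}h + K^{k-1}v.
\]
Applying $d^{k-1}$ yields $h = d^{k-1}(R^{k}h + K^{k-1}v)$.  Because $h$ is smooth and pseudo-differential operators preserve smoothness, $R^{k}h\in C^{\infty}\Lambda^{k-1}$, while $K^{k-1}v\in C^{\infty}\Lambda^{k-1}$ by construction.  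Hence $h\in\ran(d^{k-1},C^{\infty}\Lambda^{k-1})\cap H_{\infty}^{k}=\{0\}$, establishing directness.  Since the same $H_{\infty}^{k}$ and the same operators $R^{k}$, $K^{k}$ work uniformly for all $q\in\R$, this argument simultaneously yields finite dimensionality and independence of the cohomology on $q$.
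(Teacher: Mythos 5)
The paper does not prove this theorem: it is quoted directly from Costabel and McIntosh \cite{costabel2010bogovskiui}, with only the remark that their tools are regularized Poincar\'{e} and Bogovski\u{\i} operators realizing a contracting homotopy of the exterior derivative by pseudodifferential operators of order $-1$. Your outline follows exactly that strategy, so there is no competing route in the paper to compare against; the question is only whether your reconstruction is watertight.

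There is a genuine gap in your choice of $H_{\infty}^{k}$. You take $H_{\infty}^{k}$ to be an \emph{arbitrary} complement of $\ran(d^{k-1},C^{\infty}\Lambda^{k-1})$ inside $\ker(d^{k},C^{\infty}\Lambda^{k})$. But the paper's convention is that $C^{\infty}(\Omega)$ means smooth on the open set $\Omega$ with no boundary control, and such a complement need not be contained in $H^{q}\Lambda^{k}$ for any $q\geq 0$. Indeed, if $h_{0}\in\bigcap_{r}H^{r}\Lambda^{k}$ is a closed representative of a cohomology class and $g\in C^{\infty}\Lambda^{k-1}$ blows up near $\partial\Omega$, then $h_{0}+d^{k-1}g$ is another smooth closed representative of the same class that lies in no Sobolev space; a complement assembled from such representatives makes the right-hand side of \eqref{uniform-deRham} fail even to be a subset of $H^{q}\Lambda^{k}$. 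The same issue wrecks your existence step: you write $K^{k}u=d^{k-1}w+h$ with $w\in C^{\infty}\Lambda^{k-1}$, but you have no control on $w$ and therefore cannot conclude $R^{k}u+w\in H^{q+1}\Lambda^{k-1}$. (Your directness argument can be salvaged by the pseudolocality of $R^{k}$, but that is moot without the existence half.)

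The repair is to anchor $H_{\infty}^{k}$ to the finite-rank smoothing operator rather than to all of $C^{\infty}$. Since $K^{k}$ maps $H^{q}\Lambda^{k}$, for every $q$, into a fixed finite-dimensional subspace $\mathcal{R}^{k}\subset\bigcap_{r}H^{r}\Lambda^{k}$, and since the homotopy identity gives $u\equiv K^{k}u$ modulo $\ran(d^{k-1},H^{q+1}\Lambda^{k-1})$ for every closed $u\in H^{q}\Lambda^{k}$, the cohomology is spanned by classes of elements of $\mathcal{R}^{k}\cap\ker(d^{k})$ and is therefore finite-dimensional without any appeal to CW structure. Let $N^{k}$ be the set of $h\in\mathcal{R}^{k}\cap\ker(d^{k})$ lying in $\ran\bigl(d^{k-1},\bigcap_{r}H^{r}\Lambda^{k-1}\bigr)$. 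Your computation $h=d^{k-1}(R^{k}h+K^{k-1}v)$, applied to any $v\in H^{q+1}\Lambda^{k-1}$ with $d^{k-1}v=h$, now produces a potential $R^{k}h+K^{k-1}v$ in $\bigcap_{r}H^{r}\Lambda^{k-1}$ (because $h\in\mathcal{R}^{k}\subset\bigcap_{r}H^{r}$), showing that membership of $h$ in $\ran(d^{k-1},H^{q+1}\Lambda^{k-1})$ for any single $q$ already forces $h\in N^{k}$. Taking $H_{\infty}^{k}$ to be any complement of $N^{k}$ in $\mathcal{R}^{k}\cap\ker(d^{k})$ gives a $q$-independent finite-dimensional space contained in every $H^{q}\Lambda^{k}$, and with this definition both halves of your argument close correctly.
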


From this theorem and the arguments in this section we obtain a new proof of the fundamental properties of the de~Rham complex which is alternative to more classical arguments, cf.~\cite{arnold2018finite,Arnold.D;Falk.R;Winther.R.2006a} and the references therein).

Finally, we remark that we have stated Theorem~\ref{thm:CM} for the $L^2$ based Sobolev spaces, but it was proven in 
\cite{costabel2010bogovskiui} also for a variety of Banach, Besov, and Triebel--Lizorkin spaces and a number
of the results of this section would extend to these.

\section{Algebraic construction of a complex and its cohomology}\label{sec:framework}

In this section we present the algebraic construction by which we derive new differential complexes from known ones, and then we relate the cohomology of the output complex to that of the input complexes. We carry this out in an abstract setting. For simplicity, we restrict to complexes of Hilbert spaces, although some of the results could be generalized to Banach spaces without major changes.

We start with two bounded Hilbert complexes $(Z^\bs,D^\bs)$, $(\tilde Z^\bs,\tilde D^\bs)$ and bounded linking
maps $S^i: \tilde{Z}^{i}\to Z^{i+1}, ~i=-1, \cdots, n$: 
\begin{equation}\label{Z-complex}
\begin{tikzcd}
0 \arrow{r} &Z^{0} \arrow{r}{D^{0}} &Z^{1} \arrow{r}{D^{1}} &\cdots \arrow{r}{D^{n-1}} & Z^{n}  \arrow{r}{} & 0\\
0 \arrow{r} &\tilde{Z}^{0}\arrow{r}{\tilde{D}^{0}} \arrow[ur, "S^{0}"]&\tilde{Z}^{1} \arrow{r}{\tilde{D}^{1}} \arrow[ur, "S^{1}"]&\cdots \arrow{r}{\tilde{D}^{n-1}}\arrow[ur, "S^{n-1}"] & \tilde{Z}^{n} \arrow{r}{} & 0
 \end{tikzcd}
\end{equation}
(the zero maps $S^{-1}$ and $S^{n}$ are not shown).
This means that the spaces $Z^{i}$ and $\tilde{Z}^{i}$, $i=0, 1, \cdots, n$, are Hilbert spaces and the maps $D^{i}$, $\tilde{D}^{i}$, $i=0, 1, \cdots, n-1$, are bounded linear operators. 
The two complexes in \eqref{Z-complex} cannot be arbitrary. Instead,  we require that the spaces be of the form
\begin{equation}\label{zform}
Z^{i}:=V^{i}\otimes \mathbb{E}^{i} \quad \mbox{and} \quad \tilde{Z}^{i}:=V^{i+1}\otimes \tilde{\mathbb{E}}^{i}
\end{equation}
for given Hilbert spaces $V^{i}$ and finite dimensional inner product spaces  $\mathbb{E}^{i}$ and $\tilde{\mathbb{E}}^{i}$.
In typical applications, $V^{i}$ is a Sobolev space and $\mathbb{E}^{i}, \tilde{\mathbb{E}}^{i}$ might be the space of scalars, vectors, matrices, symmetric matrices, trace-free matrices, or skew symmetric matrices (denoted by $\R$, $\mathbb{V}$, $\mathbb{M}$, $\mathbb{S}$, $\mathbb{T}$, and $\mathbb{K}$, respectively, and equipped with the Frobenius norm).

In addition, we assume that the connecting operators $S^i$ are of the form
\begin{equation}\label{sform}
S^{i}=\mathrm{id}\otimes {s}^{i}
\end{equation}
where ${s}^{i}: \tilde{\mathbb{E}}^{i}\to \mathbb{E}^{i+1}$ is a linear operator between finite dimensional spaces for which we require two key properties,
\begin{itemize}
\item
 \emph{Anticommutativity:}
\begin{align}\label{SDDS}
S^{i+1}\tilde{D}^{i}=-D^{i+1}S^{i}, \quad  i=0, 1, \cdots, n-2,
\end{align}
\item The \emph{$J$-injectivity/surjectivity condition}: for some particular $J$ with $0\le J < n$,
\begin{equation}\label{injsurj}
  \text{$s^{i}$ is }
  \begin{cases}\text{injective}, & 0\leq i\leq J,\\\text{surjective}, & J\leq i < n.\end{cases}
\end{equation}
\end{itemize}  Note that the latter condition implies that $s^{J}$ is bijective.

From the $s^i$ maps we obtain the null spaces  $\ker(s^{i})\subset \tilde{\mathbb{E}}^{i}$ and the ranges: $\ran(s^{i-1})\subset \mathbb{E}^{i}$.
With all these ingredients, we now define the \emph{output complex} (in Theorem~\ref{thm:dimension} below we show that it is indeed a complex):
\begin{equation}\label{reduced-complex}
\begin{tikzcd}
0 \to\Upsilon^{0}\arrow{r}{\mathscr{D}^{0}} &\Upsilon^{1} \arrow{r}{\mathscr{D}^{1}} &\cdots \arrow{r}{\mathscr{D}^{J-1}}&\Upsilon^{J} \arrow{r}{\mathscr{D}^{J}}  &\Upsilon^{J+1} \arrow{r}{\mathscr{D}^{J+1}} &\cdots\arrow{r}{\mathscr{D}^{n-1}} &\Upsilon^{n}\to 0,
 \end{tikzcd}
\end{equation}
with spaces
\begin{equation}\label{defUps}
\Upsilon^{i}:=
\begin{cases}
V^{i}\otimes \ran ({s}^{i-1} )^{\perp}, \quad 0\leq i\leq J,\\
V^{i+1}\otimes  \ker ({s}^{i} ), \quad J<i\leq n,
\end{cases}
\end{equation}
and operators
\begin{equation}\label{Dder}
\mathscr{D}^{i}=
\begin{cases}
(\mathrm{id}\otimes P_{\ran^{\perp}} )D^{i}, \quad i<J;\\
 \tilde{D}^{J}(S^{J})^{-1}D^{J}, \quad i=J;\\
\tilde{D}^{i}, \quad i>J.
\end{cases}
\end{equation}
Here for a closed subspace $\mathbb{F}$ of some Hilbert space, we write ${P}_{\mathbb{F}}$ for the orthogonal projection onto $\mathbb{F}$ and $\mathbb{F}^\perp$ for its orthogonal complement. Of course $P_{\mathbb{F}^\perp}  = I - P_{\mathbb{F}}$. We will be particularly interested in the case where the subspace is $\ran(s^{i-1})\subset \mathbb{E}^i$ or $\ker(s^i)\subset \tilde{\mathbb{E}}^i$.  When confusion is unlikely we shorten the notation to ${P}_{\ker}$, $P_{\ker^{\perp}}$, $P_{\ran}$, and $P_{\ran^{\perp}}$ for ${P}_{\ker(s^{i})}$, etc. With a slight abuse of notation, we also denote the projections in $Z^{i}=V^i\otimes \mathbb{E}^i$ and $\tilde{Z}^{i}=V^{i+1}\otimes \tilde{\mathbb{E}}^i$, i.e.,  $\mathrm{id}\otimes {P}_{\ker}$, $\mathrm{id}\otimes P_{\ker^{\perp}}$, $\mathrm{id}\otimes P_{\ran}$, and $\mathrm{id}\otimes P_{\ran^{\perp}}$ by ${P}_{\ker}$, $P_{\ker^{\perp}}$, $P_{\ran}$, and $P_{\ran^{\perp}}$, respectively.

Note that $\mathscr{D}^{i}$ maps $\Upsilon^{i}$ to $\Upsilon^{i+1}$ for $i<J$ because we included the orthogonal projection onto $\ran(s^i)^\perp$ in its definition, while for $i\ge J$, $\mathscr{D}^i$  maps $\Upsilon^{i}$ to $\Upsilon^{i+1}$ due to the anticommutativity.

We can read out the output complex from the input $Z$ and $\tilde{Z}$ complexes as follows. We start from the left end of the top row of \eqref{Z-complex} where the $S$ operators are injective, and follow the complex rightwards, at each step restricting to the orthogonal complement of the ranges of the incoming $S$ operators. When we reach the space $Z^J$ we map to the space $\tilde Z^{J+1}$ in the bottom row by following a zig-zag path, rightwards
into  $Z^{J+1}$ by $D^J$, then down and to the left into $\tilde Z^J$ by following the linking map $S^J$ in the reverse direction (which is possible since it is a bijection), and then rightwards into $\tilde Z^{J+1}$ by $\tilde D^J$.  We then continue
rightwards along the bottom complex, restricting  to the kernels of the $S$ operators.

This completes the construction of the output complex \eqref{reduced-complex} in the abstract setting. In Section~\ref{sec:main}, we will apply it to derive the elasticity complex, the Hessian complex, and the $\dd$ complex in $3$ dimensions, generalizations of these to $n$ dimensions, and other complexes. 
In order to establish the properties of the output complex, a key result is the relation between its cohomology and that of the two input complexes \eqref{Z-complex}.  This is described in the following two theorems, which are main results of this paper. Theorem~\ref{thm:dimension} verifies that the output complex is indeed a bounded Hilbert complex and relates the dimensions of its cohomology spaces to those of the input complexes.  Under an additional assumption, Theorem~\ref{thm:rep} gives an explicit map between the output and input complexes and show that it induces an isomophism  on the cohomology. The proofs of these results will be given in Section~\ref{sec:proof}.

\begin{theorem}\label{thm:dimension}
Let there be given bounded Hilbert complexes $(Z^\bs,D^\bs)$ and $(\tilde Z^\bs,\tilde D^\bs)$ and bounded linking
maps $S^i: \tilde{Z}^{i}\to Z^{i+1}$ satisfying \eqref{Z-complex}--\eqref{injsurj}. Then the output complex defined by \eqref{reduced-complex}--\eqref{Dder} is a bounded Hilbert complex.  Moreover,
$$
\dim \mathscr{H}^{i}\left ( \Upsilon^{\bs}, \mathscr{D}^{\bs}\right )\leq \dim \mathscr{H}^{i}\left ( Z^{\bs}, {D}^{\bs}\right )+\dim \mathscr{H}^{i} ( \tilde{Z}^{\bs}, \tilde{D}^{\bs} ), \quad \forall i=0, 1, \cdots, n
$$
(where $\mathscr{H}^i$ denotes the $i$th cohomology space).
Finally, equality holds if and only if $S^i$ induces the zero maps on cohomology, i.e., if and only if
\begin{align}\label{SNR}
S^{i}\ker(\tilde{D}^{i})\subset \ran ( D^{i}), \quad \forall i=0, 1, \cdots, n-1.
\end{align}
\end{theorem}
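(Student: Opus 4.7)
The plan is twofold: first verify that \eqref{reduced-complex} is indeed a bounded Hilbert complex by a case analysis, then extract the cohomology bound by recognizing $(\Upsilon^\bs,\mathscr{D}^\bs)$ as quasi-isomorphic to the mapping cone of $S^\bs$, whose cohomology is controlled by the standard long exact sequence.

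For the complex identity $\mathscr{D}^{i+1}\mathscr{D}^{i}=0$, I split into cases according to whether $i$ lies below, at, or above the pivot $J$. When $i+1<J$, using $D^{i+1}D^{i}=0$ and \eqref{Dder},
\begin{equation*}
\mathscr{D}^{i+1}\mathscr{D}^{i}u=P_{\ran^{\perp}}D^{i+1}P_{\ran^{\perp}}D^{i}u=-P_{\ran^{\perp}}D^{i+1}P_{\ran}D^{i}u.
\end{equation*}
Injectivity of $s^{i}$ produces a unique $v\in\tilde Z^{i}$ with $S^{i}v=P_{\ran}D^{i}u$, and \eqref{SDDS} gives $D^{i+1}S^{i}v=-S^{i+1}\tilde D^{i}v\in\ran S^{i+1}$, which the outer $P_{\ran^{\perp}}$ annihilates. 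The same decomposition handles the transition indices $i=J-1$ and $i=J$, together with $\tilde D^{J}\tilde D^{J-1}=0$; the case $i>J$ reduces directly to $\tilde D^{i+1}\tilde D^{i}=0$. Boundedness of each $\mathscr{D}^{i}$ is inherited from that of the input operators, with $(S^{J})^{-1}$ bounded because $s^{J}$ is a bijection of finite-dimensional spaces.

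For the cohomology, relation \eqref{SDDS} is precisely the statement that $S^\bs$ is a chain map from $(\tilde Z^\bs,\tilde D^\bs)$ to $(Z^{\bs+1},-D^{\bs+1})$. I would form its mapping cone
\begin{equation*}
C^{i}:=\tilde Z^{i}\oplus Z^{i},\qquad d_{C}^{i}(a,b):=(\tilde D^{i}a,\,S^{i}a+D^{i}b),
\end{equation*}
whose defining short exact sequence $0\to Z^\bs\to C^\bs\to\tilde Z^\bs\to 0$ (inclusion $b\mapsto(0,b)$, projection $(a,b)\mapsto a$) yields the long exact sequence
\begin{equation*}
\cdots\to\mathscr{H}^{i-1}(\tilde Z^\bs)\xrightarrow{S^{\ast}}\mathscr{H}^{i}(Z^\bs)\to\mathscr{H}^{i}(C^\bs)\to\mathscr{H}^{i}(\tilde Z^\bs)\xrightarrow{S^{\ast}}\mathscr{H}^{i+1}(Z^\bs)\to\cdots,
\end{equation*}
whose connecting map is readily identified via the snake lemma with $S^{\ast}$. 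Exactness gives $\dim\mathscr{H}^{i}(C^\bs)=\dim\ker S^{\ast}_{i}+\dim\operatorname{coker}S^{\ast}_{i-1}\le\dim\mathscr{H}^{i}(Z^\bs)+\dim\mathscr{H}^{i}(\tilde Z^\bs)$, with equality for every $i$ if and only if every $S^{\ast}$ vanishes, which is exactly condition \eqref{SNR}.

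It remains to identify $\mathscr{H}^{i}(C^\bs)$ with $\mathscr{H}^{i}(\Upsilon^\bs)$, and this I expect to be the main obstacle. I would split $\tilde Z^{i}=(\ker S^{i})^{\perp}\oplus\ker S^{i}$ and $Z^{i+1}=\ran S^{i}\oplus(\ran S^{i})^{\perp}$ at every level. The injectivity of $s^{i}$ for $i\le J$ and its surjectivity for $i\ge J$, with bijectivity at $J$, imply that $S^{i}$ identifies $(\ker S^{i})^{\perp}\subset\tilde Z^{i}$ isomorphically with $\ran S^{i}\subset Z^{i+1}$, so these paired factors form two-term acyclic subcomplexes of $C^\bs$ that can be excised without changing cohomology. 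What remains at level $i$ is precisely the space $\Upsilon^{i}$ in \eqref{defUps}, and the induced residual differential collapses to $P_{\ran^{\perp}}D^{i}$ for $i<J$, to $\tilde D^{i}$ for $i>J$, and, at the bijective pivot, to $\tilde D^{J}(S^{J})^{-1}D^{J}$, matching \eqref{Dder}. The real work is the careful bookkeeping of these excisions across the three regimes, especially near the pivot index $J$, to confirm that the surviving complex is literally $(\Upsilon^\bs,\mathscr{D}^\bs)$; once this is in hand, Theorem~\ref{thm:dimension} follows from the mapping-cone long exact sequence above.
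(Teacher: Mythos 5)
Your overall strategy coincides with the paper's: your ``mapping cone'' $C^\bs$ is (up to reordering and sign) the paper's twisted complex $(Y^\bs,\mathscr A^\bs)$ defined by \eqref{def:A}, and the two-step plan --- first relate $\mathscr H^\bs(C)$ to the cohomology of the input complexes, then identify $\mathscr H^\bs(C)\cong\mathscr H^\bs(\Upsilon)$ --- is exactly the content of Sections~\ref{step1} and~\ref{step2}. Framing Step~1 via the mapping-cone long exact sequence is cleaner than the paper's direct computation in Lemmas~\ref{lem6}--\ref{lem7}, and it correctly yields the dimension bound and the equality criterion \eqref{SNR}. Your direct verification of $\mathscr{D}^{i+1}\mathscr{D}^i=0$ is also fine (the paper obtains it indirectly from Lemma~\ref{lem11}).

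The gap is Step~2, which you flag yourself as the main obstacle, and it is genuine: the naive excision --- pairing $(\ker S^i)^\perp\subset\tilde Z^i$ with $\ran S^i\subset Z^{i+1}$ and removing them as ``two-term acyclic subcomplexes'' --- fails because those paired factors are \emph{not} subcomplexes of $C^\bs$. Concretely, for $a\in(\ker S^i)^\perp$ the cone differential gives $d_C^i(a,0)=(\tilde D^i a,\,S^i a)$, and $\tilde D^i a$ generally has a nonzero component in $(\ker S^{i+1})^\perp$, so the image escapes the proposed two-term subspace. In the paper's coordinates: for $i\le J$ the candidate complement $\ran(S^{i-1})^\perp\times 0\subset Y^i$ is not $\mathscr A^i$-invariant (since $D^i\omega$ acquires a component in $\ran(S^i)$), and for $i>J$ the candidate acyclic piece $Z^i\times\ker(S^i)^\perp$ is not invariant either (since $\tilde D^i$ does not preserve $\ker(S^\bs)^\perp$). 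The fix is the cochain projection $\Pi^\bs$ of Lemma~\ref{lem9}, whose off-diagonal ``tilting'' terms $T^{i+1}D^iP_{\ran^\perp}$ (for $i\le J$) and $P_{\ker}\tilde D^{i-1}T^i$ (for $i>J$) correct precisely this, producing a genuine direct-sum decomposition $Y^\bs=\Pi^\bs Y^\bs\oplus(I-\Pi^\bs)Y^\bs$ into subcomplexes, with $(I-\Pi^\bs)Y^\bs$ exact (Lemma~\ref{lem10}) and $\Pi^\bs Y^\bs\cong\Upsilon^\bs$ as complexes (Lemma~\ref{lem11}). Constructing and verifying $\Pi^\bs$ --- not the long exact sequence --- is where the substantive work of the theorem lies, and is exactly what your proposal leaves open.
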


We immediately obtain from this theorem that, if the input complexes in  \eqref{Z-complex} have finite dimensional cohomology, then so does the output complex. This, in turn, implies that the operators in the output complex have closed range, and the numerous properties that this implies, as explained in Section~\ref{sec:preliminary}.


It is easy to verify the cohomology condition \eqref{SNR}, if we assume that there exist bounded operators $K^{i}: \tilde{Z}^{i}\rightarrow Z^{i}$, $i=0, 1, \ldots, n$, such that
\begin{equation}\label{SDKKD}
S^{i}=D^{i}K^{i}-K^{i+1}\tilde{D}^{i},\quad i=0,1,\ldots,n-1.
\end{equation}

\begin{proposition}\label{prop:K}
Assume that there exist bounded operators $K^{i}$ satisfying \eqref{SDKKD}. Then \eqref{SNR} holds.
\end{proposition}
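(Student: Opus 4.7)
The claim is essentially a direct computation, so my plan is very short and I do not expect any real obstacle. The key observation is that the relation \eqref{SDKKD} expresses $S^i$ as a ``homotopy'' between the two complexes, so on cocycles of the bottom complex the second term of \eqref{SDKKD} drops out and $S^i$ is forced to land in the image of $D^i$.

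Concretely, the plan is: pick any $u \in \ker(\tilde{D}^i) \subset \tilde{Z}^i$. Apply the identity \eqref{SDKKD} to $u$, giving
\begin{equation*}
S^i u = D^i(K^i u) - K^{i+1}(\tilde{D}^i u).
\end{equation*}
Since $\tilde{D}^i u = 0$, the second term vanishes, and we obtain $S^i u = D^i(K^i u) \in \ran(D^i)$. This holds for every $u \in \ker(\tilde{D}^i)$, which is exactly the inclusion \eqref{SNR}.

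Since the calculation is one line, there is really no hard step; the only content is to recognize that \eqref{SDKKD} is precisely the statement that $K^{\bs}$ is a cochain homotopy from $S^{\bs}$ to $0$, from which \eqref{SNR} is immediate. Note also that nothing in the argument requires the $K^i$ to be bounded—boundedness of $K^i$ is only needed when one wants to use this proposition together with the Hilbert-space structure elsewhere (e.g., to invoke Theorem~\ref{thm:dimension} to conclude equality of cohomology dimensions).
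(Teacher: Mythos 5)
Your proof is correct and is essentially identical to the paper's one-line argument: apply \eqref{SDKKD} to an element of $\ker(\tilde{D}^i)$ and note that the $K^{i+1}\tilde{D}^i$ term vanishes. Your aside that boundedness of the $K^i$ plays no role in this particular deduction is also accurate.
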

\begin{proof}
$S^{i}\ker(\tilde{D}^{i})=(D^{i}K^{i}-K^{i+1}\tilde{D}^{i})\ker (\tilde{D}^{i})=D^{i}(K^{i}\ker (\tilde{D}^{i}))\subset \ran(D^{i})$.
\end{proof}

Under the same assumption, we can give an explicit isomorphism from the Cartesian product of the cohomology spaces of the $Z$ and $\tilde{Z}$ complexes to the cohomology spaces of the output complex.  
Clearly, $s^{i-1}:\tilde{\mathbb{E}}^{i-1}\to \mathbb{E}^i$ restricts to an isomorphism of $\ker(s^{i-1})^\perp$ onto $\ran(s^{i-1})$. Its Moore--Penrose inverse, which we denote by $t^i:\mathbb{E}^i\to \tilde{\mathbb{E}}^{i-1}$, is defined to act as the inverse of this isomorphism on $\ran(s^{i-1})$ and to vanish on its orthogonal complement.  Equivalently, the compositions of $t^i$ and $s^i$ are the orthogonal projections
\begin{equation}\label{tsst}
t^i s^{i-1}=P_{\ker(s^{i-1})^\perp},\quad  s^{i-1} t^i=P_{\ran(s^{i-1})}. 
\end{equation}
We also let $T^i=\mathrm{id}\otimes t^i$.
The proof of the following theorem will be given in Section \ref{sec:proof}. 

\begin{theorem}\label{thm:rep}
Assume that there exist operators $K^i$ satisfying \eqref{SDKKD}.  Define
$\mathscr{K^i}:Z^i\x \tilde Z^i\to \Upsilon^i$ by
\begin{equation}\label{def:Kmap}
\mathscr{K}^{i}(\omega,\mu) = \begin{cases} P_{\ran(s^{i-1})^{\perp}}(\omega+K^{i}\mu), & 0\le i\le J,\\
P_{\ker(s^{i})}[\tilde{D}^{i-1}T^{i}\omega+(I+\tilde{D}^{i-1}T^{i}K^i)\mu], & J< i\le n.
\end{cases}
\end{equation}
This defines a cochain map from the sum complex $Z^\bs\x \tilde Z^\bs$ to the output complex $\Upsilon^\bs$
for which the induced map on cohomology is an isomorphism.
\end{theorem}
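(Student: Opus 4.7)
The plan is to prove Theorem~\ref{thm:rep} in two stages: first verify that $\mathscr{K}^\bullet$ is a cochain map, and then show the induced map on cohomology is an isomorphism by combining Theorem~\ref{thm:dimension} with an explicit surjectivity argument that is most transparent in a ``twisted'' reformulation of the sum complex.

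For the cochain map property, I would check $\mathscr{D}^i\mathscr{K}^i = \mathscr{K}^{i+1}(D^i,\tilde D^i)$ by a direct computation, splitting into the three regimes $i<J$, $i=J$, and $i>J$, since both the formula for $\mathscr{K}^i$ and the formula for $\mathscr{D}^i$ change at $i=J$. The essential algebraic inputs are the anticommutation $S^i\tilde D^{i-1} = -D^i S^{i-1}$; the projection identities $S^{i-1}T^i = P_{\ran(s^{i-1})}$ and $T^{i+1}S^i = P_{\ker(s^i)^\perp}$ obtained by tensoring \eqref{tsst} with $\mathrm{id}$; the homotopy relation $K^{i+1}\tilde D^i = D^i K^i - S^i$ from \eqref{SDKKD}; and $D^{i+1}D^i = \tilde D^i\tilde D^{i-1} = 0$. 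Typical cancellations: $P_{\ran^\perp}S^i = 0$ and $P_{\ran^\perp}D^iP_{\ran} = 0$ (the latter from anticommutativity). The case $i=J$ is the most delicate: one must reconcile the first formula for $\mathscr{K}^J$ with the second for $\mathscr{K}^{J+1}$ through $\mathscr{D}^J = \tilde D^J(S^J)^{-1}D^J$, exploiting the bijectivity of $s^J$ (so $T^{J+1} = (S^J)^{-1}$) and verifying that $\tilde D^J T^{J+1}D^J(\omega + K^J\mu)$ lies automatically in $V^{J+2}\otimes\ker(s^{J+1})$, so the projection in $\mathscr{K}^{J+1}$ is the identity on this term.

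For the cohomology isomorphism, Proposition~\ref{prop:K} yields \eqref{SNR}, so Theorem~\ref{thm:dimension} gives $\dim\mathscr{H}^i(\Upsilon^\bullet) = \dim\mathscr{H}^i(Z^\bullet) + \dim\mathscr{H}^i(\tilde Z^\bullet)$. It therefore suffices to prove surjectivity of $[\mathscr{K}^i]$. To make explicit lifts tractable, I would introduce the \emph{twisted sum complex} $(C^\bullet,\tilde d)$ with $C^i := Z^i\oplus\tilde Z^i$ and $\tilde d^i(\omega,\mu) := (D^i\omega - S^i\mu,\, \tilde D^i\mu)$, which is a complex by anticommutativity. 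The map $\Phi(\omega,\mu) := (\omega - K^i\mu,\mu)$ is a cochain isomorphism from $(C^\bullet,\tilde d)$ to the direct-sum complex $(C^\bullet,d_\oplus)$, because the homotopy $S = DK - K\tilde D$ is exactly the twist that $\Phi$ absorbs. Consequently $\mathscr{K}' := \mathscr{K}\circ\Phi$ is a cochain map from $(C^\bullet,\tilde d)$ to $\Upsilon^\bullet$ in which the $K$-contributions cancel exactly, leaving the clean form
\begin{equation*}
\mathscr{K}'^i(\omega,\mu) = \begin{cases} P_{\ran(s^{i-1})^\perp}\omega, & 0\le i\le J,\\ P_{\ker(s^i)}[\tilde D^{i-1}T^i\omega + \mu], & J<i\le n. \end{cases}
\end{equation*}
Surjectivity now follows from explicit lifts: given a $\mathscr{D}$-cocycle $y\in\Upsilon^i$, take $(y,\beta)$ with $\beta\in\tilde Z^i$ the unique solution of $S^i\beta = D^iy$ when $i<J$ (existence from $\mathscr{D}^iy=0$; uniqueness from injectivity of $s^i$; $\tilde D^i\beta=0$ from $D^{i+1}D^i=0$ and injectivity of $s^{i+1}$); take $(y, T^{J+1}D^Jy)$ when $i=J$ (using bijectivity of $s^J$); and take $(0,y)$ when $i>J$ (since $y\in\ker s^i$ forces $S^iy=0$ and $\tilde D^iy = \mathscr{D}^iy = 0$). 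In each case $\mathscr{K}'^i$ returns $y$ exactly, and since $\Phi$ is a chain isomorphism, the induced map of $\mathscr{K}$ is likewise an isomorphism.

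The main technical obstacle is the cochain-map verification at $i=J$, where one must simultaneously manage both formulas for $\mathscr{K}$, the inverse $(S^J)^{-1}$ hiding inside $\mathscr{D}^J$, the homotopy relation for $K^{J+1}$, and both projection identities. The case $i>J$ is similarly intricate because the second formula for $\mathscr{K}$ embeds $K$ inside $\tilde D^{i-1}T^iK^i$, so one must repeatedly apply $K\tilde D = DK - S$ and anticommutativity to regroup. Once these computations are secured, the twisted-complex/$\Phi$ reformulation is essentially a bookkeeping device that exposes the clean form of $\mathscr{K}'$ and reduces surjectivity to the three elementary lifts above.
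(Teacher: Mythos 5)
Your proposal identifies the same key objects as the paper's proof: the twisted complex (the paper's $(Y^\bs,\mathscr{A}^\bs)$), the cochain isomorphism $(\omega,\mu)\mapsto(\omega-K^i\mu,\mu)$ (which is the inverse of the paper's $Q^i$), and the cleaner composite $\mathscr{K}'=\mathscr{K}\circ\Phi$, which after the cancellation you compute is exactly the paper's map $\Phi^\bs\circ\Pi^\bs$ given by \eqref{pidef} and \eqref{Phi}. The verification that $\mathscr{K}'$ is a cochain map is exactly the content of Lemma~\ref{lem9} in the paper (the cochain-projection property of $\Pi^\bs$); the paper does not verify $\mathscr{K}$ directly but instead factors it as $\Phi^\bs\Pi^\bs Q^\bs$ and proves each factor separately, which is the same computational content differently organized. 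Your explicit lifts in the surjectivity step are correct and are in the same spirit as the paper's Lemma~\ref{lem10}, which instead establishes that the complementary subcomplex $(I-\Pi^\bs)Y^\bs$ is exact.

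The genuine gap is in the final step. You reduce the isomorphism claim to surjectivity of $[\mathscr{K}'^i]$ plus the dimension equality from Theorem~\ref{thm:dimension}. That implication is valid only when the cohomology spaces are finite dimensional: a surjection between two infinite-dimensional spaces of the same (infinite) dimension need not be injective, and Theorem~\ref{thm:dimension} gives no information beyond the cardinal equality $\dim=\dim$ in that case. Neither Theorem~\ref{thm:dimension} nor Theorem~\ref{thm:rep} as stated assumes finiteness. The paper sidesteps this entirely: since $\Pi^\bs$ is a cochain projection (Lemma~\ref{lem9}), the twisted complex splits as the direct sum of the subcomplexes $\Pi^\bs Y^\bs$ and $(I-\Pi^\bs)Y^\bs$, so its cohomology is the direct sum of theirs; Lemma~\ref{lem10} shows the second summand is exact, giving directly that $\Pi^\bs$ (and hence $\Phi^\bs\Pi^\bs Q^\bs=\mathscr{K}^\bs$) induces an isomorphism on cohomology, with no dimension counting. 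To repair your argument without the finiteness restriction, you would either need to prove injectivity of $[\mathscr{K}'^i]$ directly (e.g.\ by showing that a cocycle $(\omega,\mu)$ sent to a coboundary is itself a coboundary), or, as the paper does, prove that $\Pi^\bs$ is a cochain projection and that $(I-\Pi^\bs)Y^\bs$ is exact.
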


As an immediate corollary of Theorem \ref{thm:rep}, we have an explicit representation of the cohomology of the output complex. 
\begin{corollary}\label{cor:rep}
Assume that $H^{\bs}$ and $\tilde{H}^{\bs}$ are cohomology representatives of the $Z$ and $\tilde{Z}$ complexes, respectively, i.e.,
$$
\ker(D^{i}, Z^{i})=\ran(D^{i-1}, Z^{i-1})\oplus H^{i}, \quad 1\leq i\leq n,
$$
and 
$$
\ker(\tilde{D}^{i}, \tilde{Z}^{i})=\ran(\tilde{D}^{i-1}, \tilde{Z}^{i-1})\oplus \tilde{H}^{i}, \quad 1\leq i\leq n.
$$
Then 
\begin{equation}\label{explicit-cohomology}
\ker (\mathscr{D}^{i}, \Upsilon^{i})=\ran(\mathscr{D}^{i-1}, \Upsilon^{i-1})\oplus 
\begin{cases}
P_{\ran(s^{i-1})^{\perp}}(H^{i}+K^{i}\tilde{H}^{i}), \quad 1\leq i\leq J,\\
P_{\ker(s^{i})}[\tilde{D}^{i-1}T^{i}H^{i}+(I+\tilde{D}^{i-1}T^{i}K^i)\tilde{H}^{i}], & J< i\le n.
\end{cases}
\end{equation}
\end{corollary}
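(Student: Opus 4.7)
The plan is to deduce the corollary directly from Theorem~\ref{thm:rep} by tracing the representatives $H^i \times \tilde{H}^i$ through the explicit cochain map $\mathscr{K}^i$. The hypothesis of the corollary amounts to saying that $H^i \oplus \tilde{H}^i$ represents the cohomology of the product complex $(Z^\bs\x\tilde Z^\bs, D^\bs\x\tilde D^\bs)$ at level $i$, because
$$\ker(D^i\x\tilde{D}^i) = \ker(D^i)\x\ker(\tilde{D}^i) = (\ran(D^{i-1})\oplus H^i)\x(\ran(\tilde{D}^{i-1})\oplus \tilde{H}^i) = \ran(D^{i-1}\x\tilde{D}^{i-1}) \oplus (H^i\x\tilde{H}^i).$$
By Theorem~\ref{thm:rep}, $\mathscr{K}^\bs$ is a cochain map from the product complex into $\Upsilon^\bs$ that induces an isomorphism on cohomology. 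Consequently the image $\mathscr{K}^i(H^i\oplus \tilde{H}^i)$ should serve as cohomology representatives for $\mathscr{H}^i(\Upsilon^\bs)$, and evaluating the formula \eqref{def:Kmap} on $(\omega,\mu)\in H^i\x\tilde{H}^i$ yields exactly the spaces appearing on the right-hand side of \eqref{explicit-cohomology}.

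First I would read off the image of $\mathscr{K}^i$ on $H^i\x \tilde{H}^i$ branch by branch. For $0\le i\le J$, applying the first case of \eqref{def:Kmap} to arbitrary $(\omega,\mu)\in H^i\x\tilde{H}^i$ gives $P_{\ran(s^{i-1})^\perp}(\omega + K^i\mu)$, and the set of such images is precisely $P_{\ran(s^{i-1})^\perp}(H^i+K^i\tilde{H}^i)$. For $J<i\le n$, the second case of \eqref{def:Kmap} gives $P_{\ker(s^i)}[\tilde{D}^{i-1}T^i\omega + (I+\tilde{D}^{i-1}T^iK^i)\mu]$, whose image over $H^i\x\tilde{H}^i$ is the second branch in \eqref{explicit-cohomology}. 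Both are therefore contained in $\ker(\mathscr{D}^i,\Upsilon^i)$ since $\mathscr{K}^\bs$ is a cochain map and the input pairs are cocycles.

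To finish, I would invoke the following general fact, which follows from a routine diagram chase: if $\varphi^\bs$ is a cochain map inducing an isomorphism on cohomology at level $i$, and $R^i$ is a complement of $\ran(D^{i-1}_C)$ in $\ker(D^i_C)$ (i.e.\ a set of cohomology representatives), then $\ker(\mathscr{D}^i,\Upsilon^i) = \ran(\mathscr{D}^{i-1},\Upsilon^{i-1}) \oplus \varphi^i(R^i)$. Surjectivity on cohomology provides the sum decomposition, and injectivity on cohomology ensures directness: if $\varphi^i(r)\in \ran(\mathscr{D}^{i-1})$ for some $r\in R^i$, then $[r]=0$ in $\mathscr{H}^i$, which forces $r=0$ because $R^i$ meets $\ran(D^{i-1}_C)$ trivially. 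Applying this with $\varphi^\bs = \mathscr{K}^\bs$ and $R^i = H^i\x\tilde{H}^i$ gives the corollary. There is essentially no obstacle, since all the analytic content is already packaged in Theorem~\ref{thm:rep}; the only point requiring a touch of care is the direct-sum (rather than merely additive) nature of the decomposition, which is handled by the injectivity half of the cohomology isomorphism.
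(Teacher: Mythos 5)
Your proposal is correct and is essentially the argument the paper intends: the paper presents this as an ``immediate corollary'' of Theorem~\ref{thm:rep} without spelling out the details, and what you have written is exactly the routine diagram chase that makes it immediate. The general fact you isolate (a cochain map inducing a cohomology isomorphism carries any complement of the coboundaries inside the cocycles to such a complement in the target) is accurately stated and proved, and your observation that $H^i\times\tilde H^i$ represents the cohomology of the sum complex, followed by the branch-by-branch evaluation of $\mathscr{K}^i$ from \eqref{def:Kmap}, gives precisely \eqref{explicit-cohomology}.
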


\section{Applications}\label{sec:main}

In order to apply the algebraic construction from the last section we must specify
the Hilbert spaces $V^{i}$, finite dimensional inner product spaces $\mathbb{E}^{i}$ and $\tilde{\mathbb{E}}^{i}$, and the linking maps $s^{i}: \tilde{\mathbb{E}}^{i}\to \mathbb{E}^{i+1}$, and we must verify the anticommutativity
 property and the $J$-surjectivity/injectivity condition (for a particular $J$).  This then furnishes an output complex
\eqref{reduced-complex} satisfying Theorem~\ref{thm:dimension}.

In Section~\ref{sec:three-D-example} we show how to derive the elasticity, Hessian, and div-div complexes in
three dimensions, beginning with variants of the de~Rham complex.  This example is then generalized to $n$-dimensions
in Section~\ref{sec:first-example}.  Additional complexes are derived in Sections~\ref{sec:iterated}
and \ref{sec:conformal}.

\subsection{Applications in three dimensions using vector proxies}\label{sec:three-D-example}

We begin with some elementary examples, using vector calculus notation.
To this end, we introduce notations for some basic linear algebraic operations in $\R^n$:
\begin{itemize}
\item $\skw: \M\to \K$ and $\sym:\M\to\S$ are the skew and symmetric part operators,
\item $\tr:\M\to\R$ is the matrix trace,
\item $\iota: \R\to \M$ is the map $\iota u:= uI$ identifying a scalar with a scalar matrix,
\item $\dev:\mathbb{M}\to \mathbb{T}$ given by $\dev u:=u-1/n \tr (u)I$ is the deviator, or trace-free part.
\end{itemize}
In three dimensions only, we also have an isomorphism between skew symmetric matrices and vectors defined by
the map
$$
 \mskw\left ( 
\begin{array}{c}
v_{1}\\ v_{2}\\ v_{3}
\end{array}
\right ):= \left ( 
\begin{array}{ccc}
0 & -v_{3} & v_{2} \\
v_{3} & 0 & -v_{1}\\
-v_{2} & v_{1} & 0
\end{array}
\right ),
$$
Thus the operator $\mskw$ maps $\V$ isomorphically onto $\K$ and satisfies $\mskw(v)w = v\times w$ for $v,w\in\V$. The vector $v$ is said to be the axial vector of the skew matrix $\mskw(v)$.  We also define
$\vskw=\mskw^{-1}\circ \skw: \M\to \V$, the map taking a matrix to the axial vector of its skew symmetric part.  Finally, we define the map $S:\M\to\M$ by $Su = u^T-\tr(u)I$.  This map is
invertible in any number of dimensions $n>1$.

Now, let $\Omega$ be a Lipschitz domain in $\R^3$ and $q$ any real number and consider the following diagram
whose rows are complexes joined by linking maps \cite{arnold2015beijing}:
 \begin{equation}\label{diagram-4rows}
\begin{tikzcd}
0 \arrow{r} &H^{q}\otimes \mathbb{R}  \arrow{r}{\grad} &H^{q-1}\otimes \mathbb{V} \arrow{r}{\curl} &H^{q-2}\otimes \mathbb{V} \arrow{r}{\div} & H^{q-3}\otimes \mathbb{R} \arrow{r}{} & 0\\
0 \arrow{r}&H^{q-1}\otimes \mathbb{V}\arrow{r}{\grad} \arrow[ur, "\mathrm{id}"]&H^{q-2}\otimes \mathbb{M}  \arrow{r}{\curl} \arrow[ur, "2\vskw"]&H^{q-3}\otimes \mathbb{M} \arrow{r}{\div}\arrow[ur, "\tr"] & H^{q-4}\otimes \mathbb{V} \arrow{r}{} & 0\\
0 \arrow{r} &H^{q-2}\otimes \mathbb{V}\arrow{r}{\grad} \arrow[ur, "-\mskw"]&H^{q-3}\otimes \mathbb{M}  \arrow{r}{\curl} \arrow[ur, "S"]&H^{q-4}\otimes \mathbb{M} \arrow{r}{\div}\arrow[ur, "2\vskw"] & H^{q-5}\otimes \mathbb{V} \arrow{r}{} & 0\\
0 \arrow{r} &H^{q-3}\otimes \mathbb{R}\arrow{r}{\grad} \arrow[ur, "\iota"]&H^{q-4}\otimes \mathbb{V}  \arrow{r}{\curl} \arrow[ur, "-\mskw"]&H^{q-5}\otimes \mathbb{V} \arrow{r}{\div}\arrow[ur, "\mathrm{id}"] & H^{q-6}\otimes \mathbb{R} \arrow{r}{} & 0.
 \end{tikzcd}
\end{equation}
The first and the last rows of this diagram are simply the usual Sobolev de~Rham complex with two different Sobolev orders presented using vector proxies, while the middle two rows are each a Sobolev de~Rham complex tensored with the 3-dimensional space $\V$.  (Note that, because we tensor the de~Rham complex on the right, the differential operators in the middle two rows are applied columnwise: e.g., $\grad$ applied to a vector field is the matrix field whose columns are the gradients of the components of the field.)  It is elementary to check the anticommutativity around any of the six small parallelograms in the diagram.  Finally, for the maps connecting the first two rows, the $J$-injectivity/surjectivity conditions hold for $J=0$.  For the next two, it holds for $J=1$, and for the last two, for $J=2$.  We have thus verified all
the requirements to derive a new complex from any of the three pairs of consecutive rows.

From the first two rows of \eqref{diagram-4rows} we obtain in this way
the {\it Hessian complex} 
\begin{equation}\label{grad-grad0}
\begin{tikzcd}
0\arrow{r}  & H^{q}\otimes \mathbb{R} \arrow{r}{\gg} & H^{q-2}\otimes \mathbb{S} \arrow{r}{\curl} & H^{q-3}\otimes \mathbb{T} \arrow{r}{\div} & H^{q-4}\otimes \mathbb{V} \arrow{r} & 0,
\end{tikzcd}
\end{equation}
where $\gg:=\grad\grad$.
 From the second and third rows of \eqref{diagram-4rows} we obtain the {\it elasticity complex}
 \begin{equation}\label{sequence:hs}
\begin{tikzcd}
0\arrow{r} & H^{q-1}\otimes \mathbb{V} \arrow{r}{{\deff}} & H^{q-2}\otimes \mathbb{S} \arrow{r}{\inc} & H^{q-4}\otimes \mathbb{S} \arrow{r}{\div} & H^{q-5}\otimes \mathbb{V} \arrow{r} & 0.
\end{tikzcd}
\end{equation}
In this sequence, the middle operator $\inc = \curl S^{-1}\curl$ is a second order differential operator mapping matrix fields to matrix fields. Now, the curl of a symmetric matrix is trace-free (this follows from the anticommutativity of the second parallelogram of \eqref{diagram-4rows}).
It follows that $S^{-1}\curl u= {\mathrm T}\curl u$ (with $\mathrm T$ being the transpose operator) and so $\inc u = \curl {\mathrm T}\curl u$ for $u$ symmetric.  Thus
the definition of $\inc$ here extends that given after \eqref{sequence:3Delasticity}.
It is also easy to compute the action of $\inc$ on skew symmetric matrix fields. If $u$ is skew symmetric then $S^{-1}\curl u$ is the gradient of a vector field, as follows from the anticommutativity of the third parallelogram in \eqref{diagram-4rows}.  Thus $\inc u = \curl S^{-1}\curl u$ vanishes for skew $u$.

Finally we consider the last two rows of \eqref{diagram-4rows}. From these we derive the \emph{$\div\div$ complex}
 \begin{equation}\label{div-div0}
\begin{tikzcd}[row sep=large, column sep = large]
0 \to H^{q-2}\otimes \mathbb{V}  \arrow{r}{\dev\grad} & H^{q-3}\otimes \mathbb{T}  \arrow{r}{\sym\curl} & H^{q-4}\otimes \mathbb{S}  \arrow{r}{\div\div} & H^{q-6}\otimes \mathbb{V} \to 0.
\end{tikzcd}
\end{equation}
In addition to many applications of the elasticity complex as mentioned in the introduction, Pauly and Zulehner \cite{pauly2016closed} investigated the Hessian complex \eqref{grad-grad0} and the $\div\div$ complex \eqref{div-div0} with $H(\mathscr{D})$ type spaces. 
  See also \cite{pauly2018divdiv} for applications to the biharmonic equation. 

These examples, limited to three dimensions and using vector proxies instead of differential forms, are elementary but somewhat ad hoc. In the following subsection we generalize this example to $n$ dimensions using the language of differential forms, which renders it more systematic.


\subsection{Complexes from \texorpdfstring{$\alt^{k}$-valued}{Alt-k-valued} forms}\label{sec:first-example}

In this section we work in $n$ dimensions, so $\Omega$ is a domain in $\R^n$.
For $i\ge 0$, let $\alt^{i}=\alt^i \R^n$ be the space of algebraic $i$-forms, that is, of alternating $i$-linear maps on $\mathbb{R}^{n}$.  We also set $\alt^{i, J}=\alt^{i}\otimes \alt^{J}$, the space of $\alt^{J}$-valued $i$-forms or, equivalently, the space of $(i+J)$-linear maps on $\mathbb{R}^{n}$ which are alternating in the first $i$ variables and also in the last $J$ variables. Thus $\dim \alt^{i, J}=\binom{n}{i}\binom{n}{J}$.
For the linking maps, we define the algebraic operators $s^{i, J}: \alt^{i, J}\to \alt^{i+1, J-1}$
\begin{multline*}
s^{i, J}\mu (v_{0},\cdots,  v_{i})(w_{1}, \cdots, w_{J-1}):=\sum_{l=0}^{i}(-1)^{l} \mu (v_{0},\cdots, \widehat{v_{l}}, \cdots,  v_{i})(v^{l}, w_{1}, \cdots, w_{J-1}), \\ \forall v_{0}, \cdots, v_{i}, w_{1}, \cdots, w_{J-1} \in \mathbb{R}^{n}.
\end{multline*}
We also write $S^{i,J}=\mathrm{id}\otimes s^{i,J}:H^q\otimes\alt^{i, J}\to H^q\otimes\alt^{i+1, J-1}$ for any Sobolev order $q$. 
Now $H^{q}\otimes \alt^{i}$ is just another notation for $H^{q}\Lambda^{i}$, and so we have the exterior derivative, $d^{i}: H^{q}\otimes \alt^{i}\to H^{q-1}\otimes \alt^{i+1}$. Tensoring with $\alt^{J}$ then gives $d^{i}: H^{q}\otimes \alt^{i, J}\to H^{q-1}\otimes \alt^{i+1, J}$, where we have simply written $d^{i}$ in favor of $d^{i}\otimes \mathrm{id}_{\alt^{J}}$.  With these definitions, we may write down the diagram generalizing \eqref{diagram-4rows} to
$n$ dimensions:
\begin{equation}\label{diagram-nD}
\begin{tikzcd}[column sep=tiny]
0 \arrow{r} & H^{q}\otimes\alt^{0,0}  \arrow{r}{d} &H^{q-1}\otimes\alt^{1,0}  \arrow{r}{d} & \cdots \arrow{r}{d} & H^{q-n}\otimes \alt^{n,0} \arrow{r}{} & 0\\
0 \arrow{r} & H^{q-1}\otimes\alt^{0,1}  \arrow{r}{d} \arrow[ur, "S^{0,1}"] &H^{q-2}\otimes\alt^{1,1}  \arrow{r}{d}  \arrow[ur, "S^{1,1}"] & \cdots \arrow{r}{d}  \arrow[ur, "S^{n-1,1}"] & H^{q-n-1}\otimes \alt^{n,1} \arrow{r}{} & 0\\[-15pt]
 & \vdots & \vdots & {} & \vdots & {} \\[-15pt]
0 \arrow{r} & H^{q-n+1}\otimes\alt^{0,n-1}  \arrow{r}{d} &H^{q-n}\otimes\alt^{1,n-1}  \arrow{r}{d} & \cdots \arrow{r}{d} & H^{q-2n+1}\otimes \alt^{n,n-1} \arrow{r}{} & 0\\
0 \arrow{r} & H^{q-n}\otimes\alt^{0,n}  \arrow{r}{d} \arrow[ur, "S^{0,n}"] &H^{q-n-1}\otimes\alt^{1,n}  \arrow{r}{d}  \arrow[ur, "S^{1,n}"] & \cdots \arrow{r}{d}  \arrow[ur, "S^{n-1,n}"] & H^{q-2n}\otimes \alt^{n,n} \arrow{r}{} & 0.
\end{tikzcd}
\end{equation}

As before, we can take any pair of consecutive rows and apply the general algebraic construction of
Section~\ref{sec:framework}.  Specifically, we fix an arbitrary real number $q$ and an integer $J$ with $0\leq J < n$, and let $V^{i}:=H^{q-J-i}$, $\mathbb{E}^{i}:=\alt^{i, J}$, and $\tilde{\mathbb{E}}^{i}:=\alt^{i, J+1}$. The differentials $D^i$ and $\tilde D^i$ are then just the exterior derivatives
$d:H^{q-J-i}\otimes \alt^{i, J}\to H^{q-J-i-1}\otimes \alt^{i+1, J}$ and $d: H^{q-J-i-1}\otimes \alt^{i, J+1}\to H^{q-J-i-2}\otimes \alt^{i+1, J+1}$. In short, the top complex in \eqref{Z-complex} is the Sobolev de~Rham complex of order $q-J$ tensored with $\alt^J$ and the bottom complex is the Sobolev de~Rham complex of order $q-J-1$ tensored with $\alt^{J+1}$.  Finally, the linking map $s^i$ from $\tilde{\mathbb{E}}^i\to \mathbb{E}^{i+1}$, i.e., from
$\alt^{i, J+1}$ to $\alt^{i+1,J}$, is the just the natural map $s^{i,J+1}$ obtained by skew-symmetrization.

We now verify the requirements on the linking maps.
\begin{lemma}\label{SDDS-example}
With the differentials $D^i$, $\tilde D^i$ and the linking maps $s^i=s^{i,J+1}$ defined as above,
the anticommutativity condition \eqref{SDDS} holds.
\end{lemma}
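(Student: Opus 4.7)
The plan is to reduce the identity $S^{i+1}\tilde{D}^i = -D^{i+1}S^i$ to a pointwise calculation on smooth elementary tensors. By density of $C^\infty(\Omega)\otimes \alt^{i,J+1}$ in $H^{q-J-i-1}\otimes \alt^{i,J+1}$, together with the boundedness of $S^i, S^{i+1}$ and the continuity of $\tilde D^i, D^{i+1}$ between the relevant Sobolev spaces, it suffices to verify the identity for $\mu = \phi\,\alpha\otimes \beta$ with $\phi \in C^\infty(\Omega)$, $\alpha \in \alt^i$, $\beta \in \alt^{J+1}$, and then extend by linearity.

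The crucial algebraic observation is that on elementary tensors $s^{i,J+1}$ can be rewritten in a ``wedge plus interior product'' form,
\[
s^{i,J+1}(\alpha\otimes\beta) = (-1)^i \sum_{a=1}^n (\alpha \wedge dx^a)\otimes \imath_{e_a}\beta,
\]
where $\{e_a\}$ is a basis of $\R^n$, $\{dx^a\}$ its dual, and $\imath$ denotes interior product. This is deduced from the defining formula for $s^{i,J+1}$ together with the identity $\sum_l (-1)^l v_l^a\,\alpha(v_0,\dots,\hat v_l,\dots,v_i) = (-1)^i(\alpha\wedge dx^a)(v_0,\dots,v_i)$; the analogous formula with overall sign $(-1)^{i+1}$ holds for $s^{i+1,J+1}$. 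Writing $d = \sum_b dx^b \wedge \partial_b$ on the form factor, a short computation then shows that $D^{i+1}S^i\mu$ and $S^{i+1}\tilde{D}^i\mu$ coincide up to a global sign, namely $(-1)^i$ versus $(-1)^{i+1}$ — the extra minus arising because $dx^b\wedge\alpha$ has form degree $i+1$ instead of $i$. By associativity of the wedge product the tensor parts match exactly, giving $S^{i+1}\tilde{D}^i\mu = -D^{i+1}S^i\mu$.

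The only real obstacle is the sign bookkeeping in the algebraic reformulation of $s^{i,J+1}$, i.e., correctly isolating the factor $(-1)^i$. Once this sign is pinned down, the anticommutativity is conceptually transparent: $s$ carries a sign that depends on the form degree, and $d$ shifts the form degree by one, so composing $d$ and $s$ in the two possible orders produces expressions whose sole discrepancy is an overall factor of $-1$. (An equivalent, purely combinatorial route would be to expand both sides with the intrinsic Cartan formula $d\omega(v_0,\dots,v_{i+1}) = \sum_k(-1)^k\partial_{v_k}\omega(v_0,\dots,\hat v_k,\dots,v_{i+1})$ for constant vector fields and match terms indexed by pairs $(k,l)$ of omitted arguments; the same opposite sign emerges after tracking the parity of the deletion pattern.)
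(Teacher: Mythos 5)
Your proof is correct, and it takes a genuinely different route from the paper. The paper proves Lemma~\ref{SDDS-example} as a corollary of Lemma~\ref{lem:dKKd0}: it first establishes the Koszul-type identity $S^{i,J}=d^{i}\tilde K^{i,J}-\tilde K^{i+1,J}d^{i}$ by an explicit coordinate computation on terms $f\,dx^{\sigma}\otimes dx^{\tau}$, and then obtains the anticommutativity \eqref{dSSd} by applying $d^{i+1}$ on the left and $d^{i}$ on the right and using $d\circ d=0$. You bypass the Koszul operator $\tilde K$ entirely and instead rewrite $s^{i,J+1}$ as $(-1)^{i}\sum_a(\,\cdot\,\wedge dx^a)\otimes\imath_{e_a}$, so that pre- and post-composing with $d$ produces expressions that agree except for the degree-dependent prefactor $(-1)^{i}$ versus $(-1)^{i+1}$. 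I checked your reformulation of $s$ against the paper's defining formula and it is correct (the sign $(-1)^i$ comes from the shuffle sign $(-1)^{i-l}$ in $(\alpha\wedge dx^a)(v_0,\dots,v_i)$ combined with the $(-1)^l$ in the definition of $s^{i,J+1}$). Your argument is more self-contained and conceptually sharper for this one statement, since it isolates the source of the minus sign as a parity shift under $d$. The paper's route, however, is more economical in context: the identity \eqref{dKKd0} is needed anyway to build the smoothing operators $K^i$ of \eqref{defK} and verify condition \eqref{SDKKD}, so the authors get anticommutativity for free rather than proving it twice. Both approaches are valid; yours would serve well as a standalone proof, while the paper's is chosen to avoid duplication of effort.
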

We will prove this result shortly, as a corollary of Lemma~\ref{lem:dKKd0}.
\begin{lemma}\label{lem:s-insurj}
The operators $s^i = s^{i, J+1}$ are injective for $0\le i\leq J$ and surjective for $J\le i\le n$.
\end{lemma}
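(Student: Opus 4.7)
My plan is to introduce a partner operator $r$ adjoint to $s$, establish a commutation relation of $\mathfrak{sl}_{2}$-type, and read off both claims. Fix an orthonormal basis $e_{1},\dots,e_{n}$ of $\R^{n}$ with dual basis $e^{1},\dots,e^{n}$. Write $\epsilon^{k}$ for exterior multiplication by $e^{k}$ and $\iota^{k}$ for interior multiplication by $e_{k}$; subscripts $x,y$ indicate action on the first or second tensor factor of $\alt^{i,J}=\alt^{i}\otimes\alt^{J}$. Unfolding the defining formula for $s^{i,J}$ gives $s^{i,J}=\sum_{k}\epsilon^{k}_{x}\iota^{k}_{y}$, whose adjoint with respect to the inner product for which $\{e^{I}\otimes e^{K}\}$ is orthonormal is
\[
r^{i,J}:=\sum_{k=1}^{n}\epsilon^{k}_{y}\iota^{k}_{x}:\alt^{i,J}\to\alt^{i-1,J+1};
\]
in particular, surjectivity of $s^{i,J+1}$ is equivalent to injectivity of $r^{i+1,J}$. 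The key identity, following from the Clifford-type relation $\epsilon^{k}\iota^{l}+\iota^{l}\epsilon^{k}=\delta^{kl}$ on each factor and the commutativity of operators on different factors, is
\[
[r,s]=H,\qquad H\big|_{\alt^{i,J}}=(J-i)\,\mathrm{id}.
\]
Combined with $[H,r]=2r$, an easy induction yields $[r^{k},s]=k\,r^{k-1}(H+k-1)$ for all $k\ge 1$.

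For injectivity of $s^{i,J+1}$ when $0\le i\le J$, let $\mu\in\alt^{i,J+1}$ satisfy $s\mu=0$, so $H\mu=\lambda\mu$ with $\lambda:=J+1-i\ge 1$. Assume $\mu\ne 0$ and pick the largest integer $k_{0}\ge 0$ with $r^{k_{0}}\mu\ne 0$; such $k_{0}$ exists and satisfies $k_{0}\le i$ since $\alt^{i-k,J+1+k}=\{0\}$ for $k>i$. Then $r^{k_{0}+1}\mu=0$, and applying the iterated commutator to $\mu$ gives
\[
0=[r^{k_{0}+1},s]\mu=(k_{0}+1)(\lambda+k_{0})\,r^{k_{0}}\mu.
\]
Since $r^{k_{0}}\mu\ne 0$ and $k_{0}+1\ge 1$, this forces $\lambda+k_{0}=0$, contradicting $\lambda\ge 1$ and $k_{0}\ge 0$. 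Hence $\mu=0$.

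For surjectivity of $s^{i,J+1}$ when $J\le i\le n$, use the involution $\tau:\alt^{p,q}\to\alt^{q,p}$ that swaps the two tensor factors. This $\tau$ is an isometry and exchanges $\epsilon^{k}_{x}\leftrightarrow\epsilon^{k}_{y}$ and $\iota^{k}_{x}\leftrightarrow\iota^{k}_{y}$, so it intertwines $r$ and $s$; explicitly $\tau r^{i+1,J}\tau^{-1}=s^{J,i+1}$. Thus the injectivity of $r^{i+1,J}$ (equivalent to the desired surjectivity by adjointness) reduces to that of $s^{J,i+1}:\alt^{J,i+1}\to\alt^{J+1,i}$, which is the already-proved injectivity case of the lemma applied with indices $(i',J')=(J,i)$ (the hypothesis $J\le i$ is precisely $i'\le J'$). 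The main technical step is the commutator identity $[r,s]=H$; once it is in place, the remainder of the argument is a short piece of finite-dimensional bookkeeping.
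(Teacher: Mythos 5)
Your proof is correct, and it takes a genuinely different and more conceptual route than the paper's. The paper proves a norm identity $\|s\rho\|^{2}=\|s^{*}\rho\|^{2}+(n-2k)\|\rho\|^{2}$ only on a special subspace $W(n,k)\subset\alt^{k}\otimes\alt^{n-k}$ (the ``balanced'' case $m=n-k$), established by explicit combinatorial counting over $k$-subsets of $[n]$; it then needs two separate reductions---a pullback trick with maps $T_{*}$ for surjectivity, and a direct-sum decomposition indexed by multi-indices for injectivity---to pass from this special case to general $(k,m)$. Your observation is that the underlying $\mathfrak{sl}_{2}$ relation $[r,s]=H$ with $H|_{\alt^{i,J}}=(J-i)\,\mathrm{id}$ holds \emph{globally} on all of $\alt^{\bs,\bs}$, not merely on $W$; the paper's identity is nothing but $\langle[r,s]\rho,\rho\rangle=(n-2k)\|\rho\|^{2}$ in disguise, but the paper does not recognize this and so is forced into the special-case reductions. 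Once you have the global commutator (and the easy $[H,r]=2r$), the highest-weight argument with $[r^{k},s]=k\,r^{k-1}(H+k-1)$ finishes injectivity in one stroke, and surjectivity follows from adjointness ($r=s^{*}$) plus the factor-swap involution $\tau$, which cleanly trades $r$ for $s$ and turns the surjectivity range $J\le i$ into the injectivity range already handled. Two small remarks: first, you could shorten further, since for the injectivity case the single commutator suffices---taking inner products in $[r,s]\mu=\lambda\mu$ against $\mu$ gives $-\|r\mu\|^{2}=\lambda\|\mu\|^{2}$ with $\lambda\ge 1$, forcing $\mu=0$ directly; second, it is worth saying explicitly that all operators on the $x$-factor commute (not anticommute) with those on the $y$-factor, since the two factors are an honest tensor product rather than a subquotient of a single exterior algebra---this is what makes the cross terms $k\ne l$ cancel in $[r,s]$ and is the one place a reader might worry about signs.
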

We prove Lemma \ref{lem:s-insurj} in Appendix 1.

In this example, the output complex \eqref{reduced-complex} reads
\begin{multline}\label{seq:alt}
\cdots \to H^{q-2J+1}\otimes \ran (s^{J-2,J+1})^{\perp} \to H^{q-2J}\otimes \ran (s^{J-1,J+1})^{\perp} \xrightarrow{d\circ (S^{J,J+1})^{-1}\circ d} 
\\
H^{q-2J-2}\otimes \ker(s^{J+1,J+1})  \to H^{q-2J-3}\otimes \ker(s^{J+2,J+1}) \to  \cdots 
\end{multline}

In this way we have derived $n$ new complexes, one for each choice of $J$ with $0\le J<n$.  Each involves $n+1$ spaces and $n$ differential operators, with all the operators of first order except for one.  It follows from Theorem~\ref{thm:dimension} that each complex has finite dimensional cohomology and thus the differentials have closed range. In the case $n=3$, these complexes are the Hessian complex, the elasticity complex, and the div-div complex previously derived.

\subsubsection*{Explicit representation of cohomology}
In Theorem \ref{thm:dimension}, \eqref{SNR} provides a condition for obtaining the exact dimension and specific representations of the cohomology. Next, we will introduce a Koszul type operator as required in \eqref{SDKKD} to verify this condition for the above example.  

The first step to construct such operators is to introduce the Koszul operator $\tilde{K}^{J}: H^{q}\otimes \alt^{J}\to H^{q}\otimes \alt^{J-1}$, with any real number $q$, defined by a contraction with the Euler (identity) vector field, i.e., 
\begin{align}\label{Ktilde}
\tilde{K}^{J}\mu (w_{1}, \cdots, w_{J-1}):=\mu (x, w_{1}, \cdots, w_{J-1}), \quad \forall w_{1}, \cdots, w_{J-1} \in \mathbb{R}^{n},
\end{align}
where 
$x$ is the Euler (identity) vector field in $\mathbb{R}^{n}$.  In terms of the standard coordinates on $\R^n$, we have
$$
\tilde K^J(f\,dx^{\tau_{1}}\wedge \cdots \wedge dx^{\tau_{J}})
 = \sum_{j=1}^J(-1)^{j-1}x^{\tau_j} fd\,x^{\tau_{1}}\wedge\cdots\widehat{dx^{\tau_j}} \cdots \wedge \cdots \wedge dx^{\tau_{J}}.
 $$
 where $f=f(x)$ is an arbitrary coefficient function and $\widehat{dx^{\tau_j}}$ indicates that that factor is omitted from the wedge product.
Tensoring with $\alt^{i}$, we extend the above Koszul operator to $\tilde{K}^{i,J}: H^{q}\otimes \alt^{i, J}\to H^{q}\otimes \alt^{i, J-1}$. 
\begin{lemma}\label{lem:dKKd0}
We have 
\begin{align}\label{dKKd0}
S^{i, J}=d^{i}\tilde{K}^{i,J}-\tilde{K}^{i+1,J}d^{i}.
\end{align}
\end{lemma}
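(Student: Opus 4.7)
The plan is to verify \eqref{dKKd0} by reducing it to a direct coordinate computation on rank-one tensors $f\,dx^{\sigma}\otimes dx^{\tau}$ and then concluding by $\mathbb{R}$-linearity. Any element of $H^{q}\otimes\alt^{i,J}$ expands as $\sum_{\sigma,\tau} f_{\sigma,\tau}(x)\,dx^{\sigma}\otimes dx^{\tau}$ with $\sigma$ and $\tau$ strictly increasing multi-indices of lengths $i$ and $J$, and the three operators $d^{i}$, $\tilde{K}^{i,J}$, $S^{i,J}$ are all $\mathbb{R}$-linear, so it is enough to check the identity on one such term.

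Fix $\sigma$ and $\tau$. From the coordinate formula $\iota_{x}\,dx^{\tau}=\sum_{m}(-1)^{m-1}x^{\tau_{m}}\,dx^{\tau\setminus\tau_{m}}$ for contraction with the Euler vector field on the value factor, I obtain
\begin{align*}
\tilde{K}^{i,J}(f\,dx^{\sigma}\otimes dx^{\tau}) &= \sum_{m=1}^{J}(-1)^{m-1}\, f x^{\tau_{m}}\,dx^{\sigma}\otimes dx^{\tau\setminus\tau_{m}},\\
d^{i}\tilde{K}^{i,J}(f\,dx^{\sigma}\otimes dx^{\tau}) &= \sum_{m}(-1)^{m-1}\bigl[x^{\tau_{m}}\,df\wedge dx^{\sigma}+f\,dx^{\tau_{m}}\wedge dx^{\sigma}\bigr]\otimes dx^{\tau\setminus\tau_{m}},\\
\tilde{K}^{i+1,J}d^{i}(f\,dx^{\sigma}\otimes dx^{\tau}) &= \sum_{m}(-1)^{m-1}x^{\tau_{m}}\,(df\wedge dx^{\sigma})\otimes dx^{\tau\setminus\tau_{m}},
\end{align*}
where the middle line uses the Leibniz rule applied to $d(fx^{\tau_{m}})$, and the last line uses that $d^{i}$ touches only the first tensor factor. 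Subtracting, the $x^{\tau_{m}}df$ terms cancel and only $f\sum_{m}(-1)^{m-1}(dx^{\tau_{m}}\wedge dx^{\sigma})\otimes dx^{\tau\setminus\tau_{m}}$ survives.

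It remains to identify this quantity with $S^{i,J}(f\,dx^{\sigma}\otimes dx^{\tau})$. Expanding the defining formula for $s^{i,J}$ and applying the Laplace expansion $dx^{\tau}(v_{l},w_{1},\ldots,w_{J-1})=\sum_{m}(-1)^{m-1}(v_{l})^{\tau_{m}}\,dx^{\tau\setminus\tau_{m}}(w_{1},\ldots,w_{J-1})$ in the inner slot, together with the standard identity $(dx^{k}\wedge\omega)(v_{0},\ldots,v_{i})=\sum_{l=0}^{i}(-1)^{l}(v_{l})^{k}\,\omega(v_{0},\ldots,\widehat{v_{l}},\ldots,v_{i})$ for $\omega\in\alt^{i}$ applied to $\omega=dx^{\sigma}$, collapses the double sum over $l$ and $m$ into exactly $f\sum_{m}(-1)^{m-1}(dx^{\tau_{m}}\wedge dx^{\sigma})\otimes dx^{\tau\setminus\tau_{m}}$, matching the expression just obtained.

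The only delicate point is the sign bookkeeping in this last step: the outer sign $(-1)^{l}$ from the definition of $s^{i,J}$ and the sign $(-1)^{m-1}$ from the Laplace expansion of $dx^{\tau}$ must be interleaved so that, after summation over $l$, the expression $\sum_{l}(-1)^{l}(v_{l})^{\tau_{m}}\,dx^{\sigma}(v_{0},\ldots,\widehat{v_{l}},\ldots,v_{i})$ is recognized as the evaluation of $dx^{\tau_{m}}\wedge dx^{\sigma}$ on $(v_{0},\ldots,v_{i})$. Once those signs are tracked, the remaining manipulations are routine, and the identity follows for every basis element and hence on all of $H^{q}\otimes\alt^{i,J}$.
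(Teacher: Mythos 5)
Your proof is correct and follows essentially the same route as the paper's: reduce to basis elements $f\,dx^\sigma\otimes dx^\tau$, apply the Leibniz rule to $d(x^{\tau_m}f)$ so that the $x^{\tau_m}\,df$ terms match $\tilde K^{i+1,J}d^i$, and identify the remaining $\sum_m(-1)^{m-1}f\,dx^{\tau_m}\wedge dx^\sigma\otimes dx^{\tau\setminus\tau_m}$ with $S^{i,J}$. The only difference is that you verify this last identification against the abstract definition of $s^{i,J}$ explicitly, whereas the paper treats it as immediate.
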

\begin{proof}
We may expand an arbitrary element of $H^q\otimes \alt^{i, J}$ as a sum of terms of the form
$$
\mu:= f\, dx^{\sigma_{1}}\wedge \cdots \wedge dx^{\sigma_{i}}\otimes dx^{\tau_{1}}\wedge \cdots \wedge dx^{\tau_{J}},
$$
where $1\le\sigma_1<\cdots<\sigma_i\le n$, $1\le\tau_1<\cdots<\tau_J\le n$, and $f\in H^q(\Omega)$.
Thus it suffices to prove that $S^{i, J}\mu=d^{i}\tilde{K}^{i,J}\mu-\tilde{K}^{i+1,J}d^{i}\mu$ for such $\mu$.
Now
$$
\tilde K^{i,j}\mu=\sum_{j=1}^J(-1)^{j-1}x^{\tau_j}f\,dx^{\sigma_{1}}\wedge \cdots \wedge dx^{\sigma_{i}}\otimes 
dx^{\tau_{1}}\wedge\cdots\widehat{dx^{\tau_j}} \cdots \wedge \cdots \wedge dx^{\tau_{J}}.
$$
and
\begin{align*}
d^{i}&\tilde K^{i,J}\mu
\\
&=\sum_{l=1}^n\sum_{j=1}^J(-1)^{j-1}\frac{\partial(x^{\tau_j}f)}{\partial x^l}dx^l\wedge dx^{\sigma_{1}}\wedge \cdots \wedge dx^{\sigma_{i}}\otimes 
dx^{\tau_{1}}\wedge\cdots\widehat{dx^{\tau_j}} \cdots \wedge \cdots \wedge dx^{\tau_{J}}
\\
&=\sum_{j=1}^J(-1)^{j-1}x^{\tau_j}\sum_{l=1}^n\frac{\partial f}{\partial x^l}dx^l\wedge dx^{\sigma_{1}}\wedge \cdots \wedge dx^{\sigma_{i}}\otimes 
dx^{\tau_{1}}\wedge\cdots\widehat{dx^{\tau_j}} \cdots \wedge \cdots \wedge dx^{\tau_{J}}
\\
&\quad + \sum_{j=1}^J(-1)^{j-1}f\,dx^{\tau_j}\wedge dx^{\sigma_{1}}\wedge \cdots \wedge dx^{\sigma_{i}}\otimes 
dx^{\tau_{1}}\wedge\cdots\widehat{dx^{\tau_j}} \cdots \wedge \cdots \wedge dx^{\tau_{J}}
\\
&=\tilde K^{i+1,J} d^i \mu + S^{i,J}\mu.
\end{align*}
\end{proof}
An immediate consequence of the lemma is the identity
\begin{equation}\label{dSSd}
d^{i+1}S^{i,J}=-S^{i+1,J}d^i,
\end{equation}
which establishes Lemma~\ref{SDDS-example}.

Lemma~\ref{lem:dKKd0} suggests that we use the operators $\tilde K^{i,J}$ to obtain the condition \eqref{SDKKD}.  However, these operators do not satisfy the necessary boundedness.  The condition requires an operator $K^i$
mapping $H^{q-J-i-1}\otimes \alt^{i, J+1}$ boundedly into $H^{q-J-i}\otimes \alt^{i, J}$, i.e., which smooths by one order of differentiability, but the operators $\tilde K^{i,J}$ are not smoothing.  To address this, we make use of homotopy operators for the Sobolev de~Rham complex established by Costabel and McIntosh  \cite{costabel2010bogovskiui}. 
\begin{lemma}
For the complex \eqref{sobolev-deRham}, there exist $P^{i}: H^{q-i}\Lambda^{i}\rightarrow H^{q-i+1}\Lambda^{i-1}$ and $L^{i}: H^{q-i}\Lambda^{i} \rightarrow C^{\infty}\Lambda^{i}$ with finite dimensional range, for $i=1, 2, \cdots, n$, satisfying
\begin{align}\label{dPPdL}
d^{i-1}P^{i}+P^{i+1}d^{i}=\mathrm{id}-L^{i}, \quad i=1, 2, \cdots, n.
\end{align} 
\end{lemma}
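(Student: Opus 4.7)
The plan is to deduce this lemma from Theorem~\ref{thm:CM} (Costabel--McIntosh) and the regular potential Theorem~\ref{thm:regpot}, rather than constructing the operators from scratch via regularized path integrals. The key observation is that the uniform representation of cohomology together with the regular potential map already furnish a canonical way to split any form into an exact piece with smooth potential, a coexact piece, and a cohomological piece.

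Concretely, given $\omega \in H^{q-i}\Lambda^{i}$, I would proceed in three stages. First, since $d^{i+1}d^{i}=0$, the form $d^{i}\omega$ lies in $\ker(d^{i+1}, H^{q-i-1}\Lambda^{i+1})$ and is already in $\ran(d^{i}, H^{q-i}\Lambda^{i})$. Theorem~\ref{thm:regpot} (applied to the de~Rham complex) then yields a unique $\gamma \in H^{q-i}\Lambda^{i}$ with $\gamma \perp \ker d^{i}$, $d^{i}\gamma = d^{i}\omega$, and $\|\gamma\|_{q-i} \lesssim \|d^{i}\omega\|_{q-i-1} \lesssim \|\omega\|_{q-i}$. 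Second, the difference $\omega - \gamma$ lies in $\ker(d^{i}, H^{q-i}\Lambda^{i})$, so by Theorem~\ref{thm:CM} it decomposes uniquely as $\omega - \gamma = d^{i-1}\alpha + h$ with $\alpha \in H^{q-i+1}\Lambda^{i-1}$ (chosen orthogonal to $\ker d^{i-1}$) and $h \in H_{\infty}^{i}$. Set $P^{i}\omega := \alpha$ and $L^{i}\omega := h$. Third, to define $P^{i+1}$ on all of $H^{q-i-1}\Lambda^{i+1}$ in a way compatible with the above, I would first orthogonally project any $\tau \in H^{q-i-1}\Lambda^{i+1}$ onto the closed subspace $\ker(d^{i+1}, H^{q-i-1}\Lambda^{i+1})$, then use the uniform cohomology decomposition $\tau = d^{i}\mu + h'$ and set $P^{i+1}\tau := \mu$ (the unique regular potential with $\mu \perp \ker d^{i}$). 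Applied to $\tau = d^{i}\omega$, which lies in the kernel and has zero cohomology component, this recovers $P^{i+1}d^{i}\omega = \gamma$.

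The homotopy identity then follows immediately by construction: $d^{i-1}P^{i}\omega + P^{i+1}d^{i}\omega = d^{i-1}\alpha + \gamma = (\omega - \gamma - h) + \gamma = \omega - L^{i}\omega$. Boundedness of $P^{i}$ as a map $H^{q-i}\Lambda^{i} \to H^{q-i+1}\Lambda^{i-1}$ (smoothing by one order) is precisely the regular potential estimate from Theorem~\ref{thm:regpot}; boundedness of $L^{i}$ into $C^{\infty}\Lambda^{i}$ follows because $H_{\infty}^{i}$ is finite-dimensional and lies in $C^{\infty}\Lambda^{i}$, together with the bounds on $\alpha$ and $\gamma$.

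The main technical obstacle I anticipate is making the various projections truly canonical across the full range of Sobolev orders, in particular handling the orthogonal projection onto $\ker(d^{i+1}, H^{q-i-1}\Lambda^{i+1})$ when $q-i-1$ is negative. One workaround is to use a suitable equivalent inner product on $H^{q-i-1}\Lambda^{i+1}$ in which the kernel remains a closed subspace with a bounded orthogonal projection; another is to bypass projections altogether by defining $P^{i+1}\tau$ to be zero whenever $d^{i+1}\tau \neq 0$, i.e., by declaring $P^{i+1}$ to vanish on any fixed closed complement of $\ker d^{i+1}$. Either choice preserves both the identity (since we only evaluate $P^{i+1}$ at $d^{i}\omega$, which is always in the kernel) and the required boundedness. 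Beyond this point, the argument is essentially formal and relies only on the machinery already assembled in Section~\ref{sec:preliminary}.
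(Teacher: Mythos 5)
Your argument is correct, but it is not the paper's proof, because the paper does not prove this lemma at all: it is stated and attributed to Costabel and McIntosh \cite{costabel2010bogovskiui}, whose construction is explicit, via regularized Poincar\'e/Bogovski\u{\i} path-integral operators which they show are pseudodifferential of order $-1$. You instead reconstruct a chain homotopy abstractly from Theorem~\ref{thm:CM} and Theorem~\ref{thm:regpot}, by composing the orthogonal projection onto $\ker(d^i,H^{q-i}\Lambda^i)$, the bounded direct-sum projection onto $\ran(d^{i-1})$ inside the kernel (bounded because $H^i_\infty$ is finite-dimensional and the range is closed), and the bounded inverse of $d^{i-1}$ restricted to $(\ker d^{i-1})^\perp$. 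The consistency observation you make is exactly the one needed: your $\gamma$ coincides with $P_{(\ker d^i)^\perp}\omega$, hence with $P^{i+1}(d^i\omega)$, which yields $d^{i-1}P^i\omega + P^{i+1}d^i\omega = (P_{\ker}\omega - h) + P_{\ker^\perp}\omega = \omega - L^i\omega$, and boundedness/linearity follow as you sketch. Two remarks. First, the technical obstacle you anticipate is not actually there: $H^{q-i-1}\Lambda^{i+1}$ is a Hilbert space for every real Sobolev order, $\ker(d^{i+1},H^{q-i-1}\Lambda^{i+1})$ is closed because $d^{i+1}$ is bounded into $H^{q-i-2}\Lambda^{i+2}$, and orthogonal projection onto a closed subspace of a Hilbert space always exists, so no equivalent-inner-product trick is needed; your second workaround is just orthogonal projection with a different closed complement and is not needed either. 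Second, there is a logical inversion worth being aware of: in \cite{costabel2010bogovskiui} the homotopy operators are constructed first, and the uniform-cohomology statement (Theorem~\ref{thm:CM}) and regular-potential bounds are deduced from them; you reverse the implication. Within the paper, which treats Theorem~\ref{thm:CM} as a black box and derives Theorem~\ref{thm:regpot} from it, this reversal is perfectly sound, but it is a reconstruction rather than an independent proof. The trade-off is this: the explicit Costabel--McIntosh operators come with mapping properties simultaneously in a wide family of Banach, Besov, and Triebel--Lizorkin scales (as noted at the end of Section~\ref{sec:preliminary}), whereas your argument is shorter, uses only Hilbert-space structure, and would apply verbatim to any Sobolev scale satisfying Definition~\ref{urc}.
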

Note that, from \eqref{dPPdL}, we have the commutativity
\begin{equation}\label{comm}
d^{i}L^{i}=L^{i+1}d^{i}.
\end{equation}
Now we define the operator $K^i$ in \eqref{SDKKD} by
\begin{equation}\label{defK}
K^i=P^{i+1}S^{i, J}+L^{i}\tilde{K}^{i,J}.
\end{equation}
Then $K^i$ maps $H^{q}\otimes \alt^{i, J}$ boundedly into $H^{q+1}\otimes \alt^{i, J-1}$ for any real number $q$.  Moreover, condition \eqref{SDKKD} is still fulfilled.
\begin{lemma}
Let $0\le i\le n$ and $0\le J <n$ be integers and let  $q$ be any real number. Then
\begin{align}\label{sobolev-K}
(d^{i}K^{i,J}-K^{i+1,J}d^{i})\mu=S^{i, J}\mu,\quad\mu\in H^{q}\otimes \alt^{i, J}.
\end{align}
\end{lemma}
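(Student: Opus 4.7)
The plan is to expand both sides using the definition \eqref{defK} of $K^{i}$ and then to collapse the expression by invoking three ingredients: the Costabel--McIntosh homotopy \eqref{dPPdL}, the algebraic anticommutativity \eqref{dSSd}, and the Koszul-type identity \eqref{dKKd0} of Lemma~\ref{lem:dKKd0}, together with the commutativity $d^{i}L^{i}=L^{i+1}d^{i}$ recorded in \eqref{comm}. Since all four identities are already in hand, the proof will be a bookkeeping exercise and no real functional-analytic obstacle remains; the only thing to watch is that $P$, $L$, and $\tilde{K}$ carry the correct exterior degree at each step.

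First I would split
$$
d^{i}K^{i}\mu = d^{i}P^{i+1}S^{i,J}\mu + d^{i}L^{i}\tilde{K}^{i,J}\mu,
$$
and treat the two summands separately. For the $P$-summand, I rewrite the homotopy relation \eqref{dPPdL} (shifted by one in exterior degree and tensored with $\mathrm{id}_{\alt^{J}}$) as $d^{i}P^{i+1} = \mathrm{id} - L^{i+1} - P^{i+2}d^{i+1}$. Applying this to $S^{i,J}\mu$ and then using the anticommutativity $d^{i+1}S^{i,J}=-S^{i+1,J}d^{i}$ from \eqref{dSSd}, I obtain
$$
d^{i}P^{i+1}S^{i,J}\mu \;=\; S^{i,J}\mu - L^{i+1}S^{i,J}\mu + P^{i+2}S^{i+1,J}d^{i}\mu.
$$
For the $L\tilde{K}$-summand, I commute $d^{i}$ past $L^{i}$ via \eqref{comm} and then apply the Koszul identity \eqref{dKKd0} to get
$$
d^{i}L^{i}\tilde{K}^{i,J}\mu \;=\; L^{i+1}\tilde{K}^{i+1,J}d^{i}\mu + L^{i+1}S^{i,J}\mu.
$$

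Adding the two displays, the $\pm L^{i+1}S^{i,J}\mu$ contributions cancel, leaving
$$
d^{i}K^{i}\mu \;=\; S^{i,J}\mu \;+\; \bigl(P^{i+2}S^{i+1,J} + L^{i+1}\tilde{K}^{i+1,J}\bigr)d^{i}\mu \;=\; S^{i,J}\mu + K^{i+1}d^{i}\mu,
$$
by the definition of $K^{i+1}$ from \eqref{defK}. Rearranging gives exactly \eqref{sobolev-K}. Boundedness of $K^{i}$ from $H^{q}\otimes\alt^{i,J+1}$ into $H^{q+1}\otimes\alt^{i,J}$ is built into the construction: $P^{i+1}$ is the order $-1$ smoothing operator of Costabel--McIntosh, $S^{i,J}$ and $\tilde{K}^{i,J}$ are (zeroth-order) algebraic, and $L^{i}$ has finite-dimensional range in $C^{\infty}$.
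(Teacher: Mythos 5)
Your proof is correct and relies on exactly the same four identities as the paper's proof, namely the definition \eqref{defK}, the homotopy relation \eqref{dPPdL}, the anticommutativity \eqref{dSSd}, and the Koszul identity \eqref{dKKd0} together with \eqref{comm}. The only difference is organizational: the paper expands $dK-Kd$ jointly and groups $dPS+PdS=(dP+Pd)S$ and $Ld\tilde K-L\tilde Kd=L(d\tilde K-\tilde Kd)$, whereas you expand only $dK$ and reassemble $S+K^{i+1}d$ from the resulting terms; both are the same computation.
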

\begin{proof}
Using \eqref{defK}, \eqref{dSSd}, \eqref{dPPdL} , \eqref{comm}, and \eqref{dKKd0}, we obtain
\begin{multline*}
dK-Kd=d(PS+L\tilde{K})-(PS+L\tilde{K})d=dPS+PdS+dL\tilde{K}-L\tilde{K}d \\ =(\mathrm{id}-L)S+Ld\tilde{K}-L\tilde{K}d=S.
\end{multline*}
\end{proof}
Having verified condition \eqref{SDKKD}, we obtain \eqref{SNR} thanks to Proposition~\ref{prop:K}.
Therefore we may apply Theorem~\ref{thm:dimension} to conclude that the dimension of the $i$th
cohomology space for the output complex is precisely the sum of the corresponding dimensions for the two input de~Rham complexes.  Moreover, if we choose explicit spaces of cohomology representatives for the input  de~Rham complexes, we may apply Corollary~\ref{cor:rep} to obtain the explicit representation \eqref{explicit-cohomology} of the cohomology of the output complex.  If the cohomology representatives for the input complexes are chosen to be independent of the Sobolev index $q$, as in Theorem~\ref{thm:CM}, then the resulting representatives for the output complex will have the same property. From this follows a variety of properties for the output complex as discussed in Section~\ref{sec:preliminary} (existence of regular potentials, regular decomposition, compactness property, etc.).

\subsection{More complexes from \texorpdfstring{$\alt^k$-valued}{Alt-k-valued} forms}\label{sec:iterated}
In the previous two sections we took as the input complexes two consecutive rows of the diagram \eqref{diagram-4rows}
(in three dimensions) or its $n$-dimensional generalization \eqref{diagram-nD}.  Actually, it is not necessary that the rows be consecutive.  To illustrate, we derive a new complex taking as input complexes
the first and third rows in \eqref{diagram-4rows}.  For the connecting operators, we compose two
$S$ operators, multiply the first composition by $-1$ to retain the anticommutativity, and divide each by $2$ for convenience.
Noting that $\mskw\circ\vskw$ is the identity on $\V$ and $\tr\circ S = -2\tr$, we are led to following diagram in which we have added some additional zeros to line up the two complexes:
\begin{equation}\label{Z-complex-1-3}
\begin{tikzcd}[column sep=1.6em]
0 \arrow{r} &H^{q}\arrow{r}{\grad} &H^{q-1}\otimes \mathbb{V} \arrow{r}{\curl}&H^{q-2}\otimes \mathbb{V}  \arrow{r}{\div} & H^{q-3} \arrow{r}{} & 0 \arrow{r}{} & 0\\
0 \arrow{r}&0 \arrow{r} \arrow[ur, "0"] &H^{q-2}\otimes \mathbb{V}\arrow{r}{\grad} \arrow[ur, "\mathrm{id}"]& H^{q-3} \otimes \mathbb{M} \arrow{r}{\curl} \arrow[ur, "-\tr"]&H^{q-4} \otimes \mathbb{M} \arrow{r}{\div}\arrow[ur, "0"] &H^{q-5} \otimes \mathbb{V} \arrow{r}{} & 0.
 \end{tikzcd}
\end{equation}
One may easily verify that \eqref{Z-complex-1-3} satisfies the assumptions of Section~\ref{sec:framework}, and so we derive a new complex from it.
In this case, the output complex \eqref{reduced-complex} turns out to be the {\it $\grad\curl$ complex:}
\begin{equation}\label{grad-curl}
\begin{tikzcd}[column sep=1.5em]
0 \arrow{r}{}&H^{q}\arrow{r}{\grad} & H^{q-1}\otimes \mathbb{V}  \arrow{r}{\grad\curl} &[12] H^{q-3}\otimes \mathbb{T} \arrow{r}{\curl} & H^{q-4}\otimes \mathbb{M} \arrow{r}{\div} & H^{q-5}\otimes \mathbb{V} \arrow{r} & 0.
\end{tikzcd}
\end{equation}
The second order operator appearing in this complex, $\grad\curl$, appears in several applications. In Cosserat elasticity and couple stress models,  it is introduced to incorporate the size effects, c.f., \cite{mindlin1962effects,park2008variational}. 
We also refer to \cite{chacon2007steady}  for a $\grad\curl$ correction term  in magnetohydrodynamics problems.

In a similar way, we may take the second and fourth rows in \eqref{diagram-4rows} as inputs and derive the \emph{$\curl$ $\div$ complex:}
\begin{equation}\label{curl-div}
\begin{tikzcd}[column sep=1.4em]
0 \to H^{q}\otimes \mathbb{V}\arrow{r}{{\grad}} & H^{q-1}\otimes \mathbb{M}  \arrow{r}{\dev\curl} &[12] H^{q-2}\otimes \mathbb{T} \arrow{r}{\curl\div} &[12] H^{q-4}\otimes \mathbb{V} \arrow{r}{\div} & H^{q-5} \to 0.
\end{tikzcd}
\end{equation}
The $\curl\div$ operator for trace-free matrix fields appears in several applications including couple stress models and Cosserat elasticity, see, for example, \cite[equation 1.16]{mindlin1962effects}. The deviator of the couple-stress is a trace-free matrix field.
Gopalakrishnan, Lederer and Sch\"{o}berl \cite{gopalakrishnan2018mass} proposed a mass conserving mixed stress formulation for the Stokes problems where the $\curl \div$ operator plays a role.

We may even take the first and the last rows of  \eqref{diagram-4rows} as the input complexes.  Then there is only one nonzero linking map, obtained by composing three of the $s^i$ operators.  After multiplication by a constant it is just the identity from the first space in the last row to the last space in the first row.
This leads to the \emph{$\grad\div$ complex:}
\begin{equation}\label{grad-div}
\begin{tikzcd}[row sep=small]
0 \to H^{q}\arrow{r}{\grad} & H^{q-1} \otimes \mathbb{V}  \arrow{r}{\curl} & H^{q-2}\otimes \mathbb{V} 
\\
 & \arrow{r}{\grad\div} &[12] H^{q-4}\otimes \mathbb{V} \arrow{r}{\curl} & H^{q-5}\otimes \mathbb{V} \arrow{r}{\div} &  H^{q-6} \to 0.
\end{tikzcd}
\end{equation}

Applying Theorem \ref{thm:dimension}, we conclude that the cohomology of each of the above complexes, i.e., \eqref{grad-curl}, \eqref{curl-div} and \eqref{grad-div}, has finite dimension. We could also define $K$ operators satisfying \eqref{SDKKD} as was done in Section~\ref{sec:first-example}, verifying the conditions in Theorem~\ref{thm:rep} and thus giving an expression for the dimension of the cohomology spaces, and explicit representation of the cohomology in terms of representations of de~Rham cohomology.

\subsection{Iterating the construction}\label{sec:conformal}

In the preceding section we derived various complexes starting from two de~Rham complexes.  Next we use two of those output complexes as input to the construction, and thereby derive a new complex.  This complex includes a space of matrix fields which are both symmetric and trace-free, a class of fields which has numerous applications. Specifically, consider the following diagram whose three rows are the Hessian, elasticity, and $\div\div$ complexes derived above:
\begin{equation}\label{diagram-1}
\begin{tikzcd}
0 \arrow{r}{}&H^{q}\otimes \mathbb{V}\arrow{r}{\dev\grad} & H^{q-1}\otimes \mathbb{T}  \arrow{r}{\sym\curl} & H^{q-2}\otimes \mathbb{S} \arrow{r}{\div\div} & H^{q-4} \arrow{r}{} &  0\\
0 \arrow{r}{}&H^{q-1}\otimes \mathbb{V}\arrow{r}{\deff}\arrow[ur, "-\mskw"]  & H^{q-2}\otimes \mathbb{S}  \arrow{r}{\inc}\arrow[ur, "S"]  & H^{q-4}\otimes \mathbb{S} \arrow{r}{\div}\arrow[ur, "\tr"]  & H^{q-5}\otimes \mathbb{V} \arrow{r}{} &  0\\
0 \arrow{r}{}&H^{q-2}\arrow{r}{\gg}\arrow[ur, "\iota"] & H^{q-4}\otimes \mathbb{S}  \arrow{r}{\curl} \arrow[ur, "S"]& H^{q-5}\otimes \mathbb{T} \arrow{r}{\div}\arrow[ur, "2\vskw"] & H^{q-6}\otimes \mathbb{V} \arrow{r}{} &  0.
\end{tikzcd}
\end{equation}
Either by direct calculation or by the commutativity of \eqref{diagram-4rows}, it is elementary to check that this diagram anticommutes and satisfies the injectivity/surjectivity condition (with the operator $S$ being the bijective linking map in both rows), so we may apply the algebraic construction to either the first and second rows or to the second and third rows.  We obtain the same output complex in both cases, namely
\begin{equation}\label{TTcomplex}
\begin{tikzcd}[column sep=2.5em]
0 \to H^{q}\otimes \mathbb{V}\arrow{r}{\dev\deff} & H^{q-1}\otimes  (\mathbb{S}\cap \mathbb{T}) \arrow{r}{\cinc} & H^{q-4}\otimes (\mathbb{S}\cap \mathbb{T}) \arrow{r}{\div} & H^{q-5}\otimes \mathbb{V}  \to  0.
\end{tikzcd}
\end{equation}
Here the third order differential operator
$$
\cinc:=\curl S^{-1}\inc = \curl S^{-1}\curl S^{-1}\curl=\inc S^{-1}\curl.
$$
Note that, if $v$ is skew, then $S^{-1}v =-v$ is also skew, so $\inc S^{-1}v=0$. Thus, for any matrix field $u$, $\inc S^{-1}\sym\curl u=\inc S^{-1}\curl u= \cinc u$,
so the operator derived from the first two rows of \eqref{diagram-1} is indeed $\cinc$.

We refer to \cite{beig1996tt} and the references therein for a smooth version of \eqref{TTcomplex} and its applications in general relativity. 
The complex \eqref{TTcomplex}, which may be referred to as the conformal elasticity complex or just the conformal
complex, is in many ways analogous to the elasticity complex \eqref{sequence:hs}.  Like \eqref{sequence:hs},  \eqref{TTcomplex} is formally self-adjoint.  The operator $\cinc$ plays the role of  $\inc$ in the elasticity complex.  While the elasticity complex is locally a resolution of the 6-dimensional space of infinitesimal rigid motions (Killing fields), the complex \eqref{TTcomplex} is locally a resolution of the 10-dimensional space of conformal Killing fields, i.e., fields $v$ for which $\dev\deff v$ vanishes.  From the elasticity complex we obtain Korn's inequality as one of the Poincar\'e inequalities of the complex, bounding the $H^1$ norm of a vector field by the $L^2$ norm of its deformation as long as the field is orthogonal to the Killing fields.  In the same way, from the complex \eqref{TTcomplex}, we obtain the stronger trace-free Korn's inequality which bounds the $H^1$ norm by the $L^2$ norm of the trace-free part of its deformation, as long as the field is orthogonal to the conformal Killing fields.

The spaces and operators appearing in \eqref{TTcomplex} have numerous applications in general relativity and continuum mechanics. For example, Dain \cite{dain2006generalized} used them to study the momentum constraints in the Cauchy problem for the Einstein equations while Fuchs and Schirra \cite{fuchs2009application} investigated applications in relativity and Cosserat elasticity. Further, the recently proposed mass conserving mixed formulation of the Stokes equations by
Gopalakrishnan, Lederer and Sch\"{o}berl \cite{gopalakrishnan2019mass} is related to the last several spaces in \eqref{TTcomplex}.
Similarly, the trace-free Korn's inequality has various applications, e.g., to
fluid dynamics 
\cite[Proposition 2.1]{feireisl2009singular} and to Cosserat elasticity \cite{breit2017trace,jeong2009numerical,neff2009new}.  See \cite{breit2017trace} for more references on this.

Another complex can be derived if we start with the Hessian complex and the de~Rham complex with appropriate linking maps:
\begin{equation}
\begin{tikzcd}
0 \arrow{r} &H^{q}\otimes \mathbb{R}\arrow{r}{\gg} &H^{q-2}\otimes \mathbb{S} \arrow{r}{\curl} &H^{q-3}\otimes \mathbb{T}\arrow{r}{\div}&H^{q-4}\otimes \mathbb{R}\arrow{r}{}&0\\
0 \arrow{r} &H^{q-2}\otimes \mathbb{R} \arrow{r}{\grad} \arrow[ur, "\iota"] &H^{q-3}\otimes \mathbb{V}  \arrow{r}{\curl} \arrow[ur, "-\mskw"] &H^{q-4}\otimes \mathbb{V}  \arrow{r}{\div} \arrow[ur, "\mathrm{id}"] &H^{q-5}\otimes \mathbb{R}\arrow{r}{}&0.
 \end{tikzcd}
\end{equation}
The output complex which results is the {\it conformal Hessian complex}
\begin{equation}\label{conformal-hess}
\begin{tikzcd}[column sep=2.5em]
0 \to H^{q}\otimes \mathbb{R} \arrow{r}{\dev\gg} &H^{q-2}\otimes (\mathbb{S}\cap \mathbb{T})  \arrow{r}{\sym\curl} &H^{q-3}\otimes (\mathbb{S}\cap \mathbb{T})  \arrow{r}{\div\div} &H^{q-5}\otimes \mathbb{R} \to 0.
 \end{tikzcd}
\end{equation}
Similarly, we can start with the de~Rham complex and the $\div\div$ complex:
\begin{equation}
\begin{tikzcd}
0 \arrow{r} &H^{q}\otimes \mathbb{R} \arrow{r}{\grad} &H^{q-1}\otimes \mathbb{V}  \arrow{r}{\curl} &H^{q-2}\otimes \mathbb{V}  \arrow{r}{\div} &H^{q-3}\otimes \mathbb{R}\arrow{r}{}&0 \\
0 \arrow{r} &H^{q-1}\otimes \mathbb{V}\arrow{r}{\dev\grad} \arrow[ur, "\mathrm{id}"]&H^{q-2}\otimes \mathbb{T} \arrow{r}{\sym\curl} \arrow[ur, "2\vskw"]&H^{q-3}\otimes \mathbb{S}\arrow{r}{\div\div} \arrow[ur, "\tr"] &H^{q-5}\otimes \mathbb{R}\arrow{r}{}&0.
 \end{tikzcd}
\end{equation}
The output complex is again the conformal Hessian complex \eqref{conformal-hess}.
Because of its relation with the Hamiltonian constraint in general relativity, it is referred to as
the \emph{Hamiltonian complex} in \cite{beig2020linearised}.  Using similar techniques we can derive
the \emph{momentum complex} in \cite{beig2020linearised}, so named because of its relation to the momentum
constraint of relativity.

\subsection{Two space dimensions}

Most of the examples presented above in 3D have analogues in 2D. In this section we briefly summarize the output complexes in 2D. 
First we introduce some notation.
In $\mathbb{R}^{2}$, a skew symmetric matrix can be identified with a scalar. Using the same notation as in 3D, we let $\mskw: \mathbb{R}\to \mathbb{K}$ be this identification, i.e.,  
$$
\mskw(u):= \left ( 
\begin{array}{cc}
0 &u\\
-u & 0
\end{array}
\right )\quad \mbox{in } \mathbb{R}^{2}.
$$
We also let $\sskw=\mskw^{-1}\circ \skw: \mathbb{M}\to \mathbb{R}$ be the map taking the skew part of a matrix and identifying it with a scalar.

The 2D analogue of the diagram \eqref{diagram-4rows} is
\begin{equation}\label{diagram-3rows2D}
\begin{tikzcd}
0 \arrow{r}  &H^{q}\otimes \mathbb{R} \arrow{r}{\grad} &H^{q-1}\otimes \mathbb{V} \arrow{r}{\rot} & H^{q-2}\otimes \mathbb{R} \arrow{r}{} & 0\\
0 \arrow{r} & H^{q-1}\otimes \mathbb{V} \arrow{r}{\grad}\arrow[ur, "\mathrm{id}"] &H^{q-2}\otimes \mathbb{M} \arrow{r}{\rot}\arrow[ur, "-2\sskw"] & H^{q-3}\otimes \mathbb{V} \arrow{r}{} & 0\\
0 \arrow{r}  &H^{q-2}\otimes \mathbb{R} \arrow{r}{\grad}\arrow[ur, "\mskw"] &H^{q-3}\otimes \mathbb{V} \arrow{r}{\rot}\arrow[ur, "\mathrm{id}"] & H^{q-4}\otimes \mathbb{R} \arrow{r}{} & 0.
 \end{tikzcd} 
\end{equation}
The output complexes using two consecutive rows read:
\begin{equation*}
\begin{tikzcd}
0 \arrow{r}& H^{q} \arrow{r}{\gg} & H^{q-2}\otimes \mathbb{S}  \arrow{r}{\rot} & H^{q-3}\otimes \mathbb{V} \arrow{r} & 0,
\end{tikzcd}
\end{equation*}
and
\begin{equation*}
\begin{tikzcd}[column sep=3em]
0  \arrow{r}& H^{q-1}\otimes \mathbb{V} \arrow{r}{\deff} & H^{q-2}\otimes \mathbb{S}  \arrow{r}{\rot\rot} & H^{q-4} \arrow{r} & 0,
\end{tikzcd}
\end{equation*}
respectively. 
Using the first and last rows, we obtain the following diagram:
\begin{equation}\label{BGG:degree-2-2}
\begin{tikzcd}
0 \arrow{r} &H^{q}\arrow{r}{\grad} &H^{q-1}\otimes \mathbb{V} \arrow{r}{\rot}&H^{q-2}\arrow{r}{} & 0 \arrow{r}{} & 0\\
0 \arrow{r}& 0 \arrow{r} \arrow[ur, "0"] &H^{q-2}\arrow{r}{\grad} \arrow[ur, "\mathrm{id}"]& H^{q-3} \otimes \mathbb{V} \arrow{r}{\rot}\arrow[ur, "0"] &H^{q-4} \arrow{r}{}\arrow[ur, "0"] & 0.
 \end{tikzcd}
\end{equation}
 This leads to the output complex
\begin{equation}
\begin{tikzcd}[column sep=3em]
0 \arrow{r} & H^{q} \arrow{r}{\grad} & H^{q-1}\otimes \mathbb{V} \arrow{r}{\grad\rot} & H^{q-3}\otimes \mathbb{V} \arrow{r}{\rot} & H^{q-4}\arrow{r}& 0.
\end{tikzcd}
\end{equation}
On contractible domains, the cohomology at $H^{q-1}\otimes \mathbb{V}$ is $\mathbb{R}$. 

The conformal complexes \eqref{TTcomplex} and \eqref{conformal-hess} do \emph{not} immediately carry over to 2D.  That is because in the diagram
\begin{equation}\label{diagram-2D-reduction}
\begin{tikzcd}
0 \arrow{r}{} &H^{q}\otimes \mathbb{V} \arrow{r}{{\deff}} &H^{q-1}\otimes \mathbb{S} \arrow{r}{\rot\rot} & H^{q-3}\arrow{r} & 0\\
0 \arrow{r}{}&H^{q-1} \arrow{r}{{\gg}}\arrow[ur, "\iota"] &H^{q-3}\otimes \mathbb{S} \arrow{r}{\rot} \arrow[ur, "\tr"]& H^{q-4}\arrow{r} & 0,
 \end{tikzcd} 
\end{equation}
analogous to \eqref{diagram-1}, 
neither of the two linking maps is bijective.  The failure of this diagram to fulfil the requirements of our framework is consistent with the invalidity of the trace-free Korn's inequality in two dimensions.

\section{Proof of main results}\label{sec:proof}

In this section, we prove the main results on the dimension of cohomology and the cohomology isomorphism, i.e., Theorems~\ref{thm:dimension} and \ref{thm:rep}. To relate the cohomology of the input complexes, i.e., the $Z$ and the $\tilde{Z}$ complexes \eqref{Z-complex}, to the cohomology of the output complex \eqref{reduced-complex},  we follow two steps.  Throughout the section we assume that $Z^\bs$ and $\tilde Z^\bs$ are bounded Hilbert complexes and the $S^i$ are bounded linear operators satisfying \eqref{Z-complex}--\eqref{injsurj}. 

The first step, detailed in Section~\ref{step1}, is to construct a twisted direct sum of the $Z$ and the $\tilde{Z}$ complexes (this is \eqref{A-sequence-0}  below, which we refer to as the ``twisted complex''), and compare it with the direct sum of the $Z$ and the $\tilde{Z}$ complexes (referred to as the ``sum complex''). We will show that in general there exists a surjective map from the cohomology of the sum complex to the cohomology of the twisted complex. Therefore the dimension of the cohomology of the twisted complex is bounded by the sum of the dimensions of cohomology of the $Z$ and $\tilde{Z}$ complexes. Furthermore, the cohomology dimensions of the sum complex and of the twisted complex are equal if the condition $S\ker\subset \ran$ in Theorem \ref{thm:dimension} holds.

The second step, explained in Section~\ref{step2}, is to split the twisted complex into two subcomplexes. One of them is isomorphic to the output complex, while the other is exact on any domain, independent of its topology. Removing the exact sequence from the twisted complex does not change its cohomology. Thus we see that the cohomology of the output complex  is isomorphic to that of the twisted complex.

Combining the two steps we obtain the desired relation between the cohomology of the input complexes and of the output complex.

\subsection{From the sum complex to twisted complex}\label{step1}

The direct sum of the complexes $(Z^\bs, D^\bs)$ and $(\tilde{Z}^\bs,\tilde D^\bs)$ from \eqref{Z-complex} is the complex with the spaces
$Y^i:= Z^i\x \tilde Z^i$ and the differentials $D^i\x \tilde D^i$.  The \emph{twisted complex} has the same spaces, but the differentials are taken to be
\begin{equation}\label{def:A}
\mathscr{A}^{i}:=\left ( 
\begin{array}{cc}
D^{i} & -S^{i} \\
0 & \tilde{D}^{i}
\end{array}
\right ).
\end{equation}
Thus the twisted complex is
\begin{equation}\label{A-sequence-0}
\begin{tikzcd}
\cdots \arrow{r}{} &Y^{i-1} \arrow{r}{\mathscr{A}^{i-1}} &Y^{i} \arrow{r}{\mathscr{A}^{i}} & Y^{i+1}  \arrow{r}{} & \cdots,
 \end{tikzcd}
\end{equation}
which we write as $(Y^\bs,\mathscr{A}^\bs)$ or simply as $Y^\bs$ for short.
The anticommutativity \eqref{SDDS} implies the chain complex condition $\mathscr{A}^{i+1}\circ\mathscr{A}^i=0$.
In the remainder of this subsection we relate the cohomology of the twisted complex to that of the sum complex (or, equivalently, to that of the input complexes $Z^\bs$ and $\tilde Z^\bs$).

Let $H^{\bs}$ and $\tilde{H}^{\bs}$ be cohomology representatives for the $Z$ and $\tilde{Z}$ complexes, i.e., 
$$
\ker (D^{i}, Z^{i})=\ran(D^{i-1}, Z^{i-1})\oplus H^{i} \quad \text{and} \quad \ker (\tilde{D}^{i}, \tilde{Z}^{i})=\ran(\tilde{D}^{i-1}, \tilde{Z}^{i-1})\oplus \tilde{H}^{i}.
$$
Also, let $W^{\bs}$ complement $\ker(D^{\bs})$ in $Z^{\bs}$ and similarly for $\tilde{W}^{\bs}$. Thus
$$
Z^{i}=\ran(D^{i-1})\oplus H^{i} \oplus W^{i},\quad \mbox{and} \quad \tilde{Z}^{i}=\ran(\tilde{D}^{i-1})\oplus \tilde{H}^{i} \oplus \tilde{W}^{i}.
$$
Then $D^{i}: W^{i}\to \ran(D^{i})$ is an isomorphism whose inverse we denote $l^{i}: \ran(D^{i})\to W^{i}$.
\begin{lemma}\label{lem6}
\begin{align}\label{sum}
\ker(\mathscr{A}^{i})=\ran(\mathscr{A}^{i-1})+\left ( 
\begin{array}{cc}
I & l^{i+1}S^{i} \\
0 & I
\end{array}
\right )\left \{(h, \tilde{h}): h\in H^{i}, \tilde{h}\in \tilde{H}^{i}, S^{i}\tilde{h}\in \ran(D^{i}) \right \}.
\end{align}
\end{lemma}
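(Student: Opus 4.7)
The plan is to prove the two inclusions of \eqref{sum} by unwinding the definition of $\mathscr{A}^i$ and exploiting the decompositions $Z^{i}=\ran(D^{i-1})\oplus H^{i}\oplus W^{i}$ and $\tilde Z^{i}=\ran(\tilde D^{i-1})\oplus \tilde H^{i}\oplus \tilde W^{i}$ together with the anticommutativity \eqref{SDDS}.

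For the inclusion $\supset$, I would simply check that each generator on the right hand side lies in $\ker(\mathscr A^i)$. The image of $\mathscr A^{i-1}$ is automatically in the kernel from the complex property $\mathscr A^i\mathscr A^{i-1}=0$. For a pair of the form
$\bigl(h+l^{i+1}S^i\tilde h,\tilde h\bigr)$
with $h\in H^i$, $\tilde h\in \tilde H^i$ and $S^i\tilde h\in \ran(D^i)$, I would compute $\mathscr A^i$ componentwise: the lower slot gives $\tilde D^i\tilde h=0$ since $\tilde h\in\ker(\tilde D^i)$, and the upper slot gives $D^i h + D^i l^{i+1}S^i\tilde h - S^i\tilde h = 0 + S^i\tilde h - S^i\tilde h = 0$, using $h\in\ker(D^i)$ and the fact that $D^i\circ l^{i+1}$ is the identity on $\ran(D^i)$.

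For the nontrivial inclusion $\subset$, I would take $(u,v)\in\ker(\mathscr A^i)$, so $\tilde D^i v=0$ and $D^i u = S^i v$. Using the cohomology decomposition of the $\tilde Z$ complex, write $v=\tilde D^{i-1}\tilde w+\tilde h$ with $\tilde w\in \tilde Z^{i-1}$ and $\tilde h\in \tilde H^i$. Substituting into $D^i u=S^i v$ and applying the anticommutativity $S^i\tilde D^{i-1}=-D^i S^{i-1}$, I get $D^i(u+S^{i-1}\tilde w)=S^i\tilde h$. This immediately forces $S^i\tilde h\in\ran(D^i)$, which is the side condition appearing in \eqref{sum}. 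Now decompose $u+S^{i-1}\tilde w = D^{i-1}z+h+w$ with $h\in H^i$, $w\in W^i$; applying $D^i$ gives $D^i w=S^i\tilde h$, hence $w=l^{i+1}S^i\tilde h$ by definition of $l^{i+1}$. Rearranging yields
\begin{equation*}
(u,v)=(D^{i-1}z-S^{i-1}\tilde w,\tilde D^{i-1}\tilde w)+\bigl(h+l^{i+1}S^i\tilde h,\tilde h\bigr)
=\mathscr A^{i-1}(z,\tilde w)+\left(\begin{array}{cc}I & l^{i+1}S^i\\ 0 & I\end{array}\right)(h,\tilde h),
\end{equation*}
which is exactly the form on the right of \eqref{sum}.

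I do not expect any serious obstacle: the proof is essentially bookkeeping driven by the anticommutativity \eqref{SDDS} and the complement decompositions. The only subtle point is noticing that the condition ``$S^i\tilde h\in\ran(D^i)$'' emerges naturally from the cohomology decomposition of $v$ and is not an extra hypothesis one has to impose. Once that is observed, the inverse $l^{i+1}$ produces the explicit corrective term $l^{i+1}S^i\tilde h$, giving the matrix factor in \eqref{sum}.
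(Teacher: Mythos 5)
Your proof is correct and follows essentially the same route as the paper's: decompose $\mu$ via the cohomology representatives of the $\tilde Z$-complex, apply anticommutativity to obtain $D^i(u+S^{i-1}\tilde w)=S^i\tilde h\in\ran(D^i)$ (so the side condition emerges automatically), and then use the decomposition of $Z^i$ to peel off the $\mathscr A^{i-1}$ part. The only stylistic difference is that the paper subtracts $l S^i\tilde h$ and decomposes the remainder, whereas you invoke the three-way decomposition $Z^i=\ran(D^{i-1})\oplus H^i\oplus W^i$ directly and identify the $W^i$-component as $l S^i\tilde h$; these are logically equivalent. (You also spell out the verification of the $\supset$ inclusion, which the paper leaves as "easily verified.") One small note: the paper's own definition gives $l^i:\ran(D^i)\to W^i$ as the inverse of $D^i|_{W^i}$, so the superscript in $l^{i+1}S^i$ appearing in the lemma statement is off by one relative to that definition; you carried the same superscript but interpret it correctly as the inverse of $D^i|_{W^i}$, so the substance of your argument is unaffected.
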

\begin{proof}
For $(\omega, \mu)\in \ker(\mathscr{A}^{i})$, $D^{i}\omega-S^{i}\mu=0$ and $\tilde{D}^{i}\mu=0$. Therefore there exists $\alpha\in \tilde{Z}^{i-1}$ and $\tilde{h}\in \tilde{H}^{i}$ such that $\mu=\tilde{D}\alpha+\tilde{h}$.  Then $D\omega=S\tilde{D}\alpha+S\tilde{h}=-DS\alpha+S\tilde{h}$, and $D(\omega+S\alpha)=S\tilde{h}\in \ran{(D)}$.  Therefore $l$ is well defined on $S\tilde{h}$, and 
$$
D(\omega+S\alpha-lS\tilde{h})=0,
$$
which implies that 
$$
\omega+S\alpha-lS\tilde{h}=D\beta+h,
$$
for some $\beta\in Z^{i-1}$ and $h\in H^{i}$. Now we have verified that
$$
\left ( 
\begin{array}{c}
\omega\\
\mu
\end{array}
\right )=\left ( 
\begin{array}{cc}
D & -S\\
0 & \tilde{D}
\end{array}
\right )\left ( 
\begin{array}{c}
\beta\\
\alpha
\end{array}
\right )+\left ( 
\begin{array}{cc}
I & lS\\
0 &I
\end{array}
\right )\left ( 
\begin{array}{c}
h\\
\tilde{h}
\end{array}
\right ).
$$
This shows that the left-hand side of \eqref{sum} is contained in the right-hand side.
The opposite inclusion follows from the equation
$$
\left (
\begin{array}{cc}
D^{i} & -S^{i} \\
0 & \tilde D^{i}
\end{array}
\right )
\left (
\begin{array}{cc}
I & l^{i+1}S^{i} \\
0 & I
\end{array}
\right )\left (
\begin{array}{c}
h \\
\tilde{h}
\end{array}
\right )=0,
$$
which is easily verified.
\end{proof}
Under the assumption that $S$ induces the zero map on cohomology, we obtain an explicit set of cohomology representatives
for the twisted complex.
\begin{lemma}\label{lem7}
Assume that $S$ induces zero map on cohomology, i.e., $S^{i}\ker(\tilde{D}^{i})\subset \ran(D^{i})$. Then
$$
\ker(\mathscr{A}^{i})=\ran(\mathscr{A}^{i-1})\oplus\left ( 
\begin{array}{cc}
I & l^{i+1}S^{i} \\
0 & I
\end{array}
\right )H^{i}\times \tilde{H}^{i}.
$$
\end{lemma}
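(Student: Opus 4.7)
My plan is to deduce this from Lemma~\ref{lem6} together with the hypothesis $S^{i}\ker(\tilde{D}^{i})\subset \ran(D^{i})$. The inclusion ``$\supset$'' in the asserted decomposition is already supplied by Lemma~\ref{lem6}: since $\tilde{H}^{i}\subset \ker(\tilde{D}^{i})$, the hypothesis forces $S^{i}\tilde{h}\in \ran(D^{i})$ for every $\tilde{h}\in \tilde{H}^{i}$, so the set $\{(h,\tilde{h}):h\in H^{i},\ \tilde{h}\in\tilde{H}^{i},\ S^{i}\tilde{h}\in\ran(D^{i})\}$ appearing in Lemma~\ref{lem6} is just $H^{i}\times\tilde{H}^{i}$. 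So the non-trivial content is to upgrade the sum in Lemma~\ref{lem6} to a direct sum.

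To show directness, I would suppose that
$$
\mathscr{A}^{i-1}\begin{pmatrix}\beta\\ \alpha\end{pmatrix}+\begin{pmatrix} I & l^{i+1}S^{i} \\ 0 & I \end{pmatrix}\begin{pmatrix}h\\ \tilde{h}\end{pmatrix}=0
$$
for some $(\beta,\alpha)\in Y^{i-1}$, $h\in H^{i}$, $\tilde{h}\in\tilde{H}^{i}$, and derive $h=0$ and $\tilde{h}=0$. The second component of this identity reads $\tilde{D}^{i-1}\alpha+\tilde{h}=0$. Since $\tilde{H}^{i}$ is a complement to $\ran(\tilde{D}^{i-1})$ inside $\ker(\tilde{D}^{i})$, this forces $\tilde{h}=0$ and $\alpha\in\ker(\tilde{D}^{i-1})$.

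For the first component I then get $h=S^{i-1}\alpha-D^{i-1}\beta$. Here the hypothesis enters in a crucial way: because $\alpha\in\ker(\tilde{D}^{i-1})$, the hypothesis gives $S^{i-1}\alpha\in\ran(D^{i-1})$, so $h\in\ran(D^{i-1})$; but $h\in H^{i}$, and $H^{i}\cap\ran(D^{i-1})=\{0\}$ by the definition of $H^{i}$ as a direct complement. Hence $h=0$, which completes the directness argument.

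The whole argument is essentially bookkeeping on top of Lemma~\ref{lem6}, so I do not expect any real obstacle; the one point that requires a moment of care is using the $S\ker\subset\ran$ hypothesis \emph{twice}, once on $\tilde{H}^{i}$ to obtain the sum representation itself, and again on $\ker(\tilde{D}^{i-1})$ to rule out a nontrivial $h$ in the overlap. Both uses are straightforward given the hypothesis.
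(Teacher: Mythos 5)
Your proposal is correct and follows essentially the same route as the paper's proof: invoke Lemma~\ref{lem6}, observe that the hypothesis collapses the constraint set to all of $H^{i}\times\tilde{H}^{i}$, and then verify directness by examining the second component to kill $\tilde{h}$ and force $\alpha\in\ker(\tilde{D}^{i-1})$, and the first component together with the hypothesis $S\ker(\tilde{D})\subset\ran(D)$ to kill $h$. The only cosmetic difference is that the paper frames directness as triviality of the intersection $\ran(\mathscr{A}^{i-1})\cap\left(\begin{smallmatrix} I & lS\\ 0 & I\end{smallmatrix}\right)H^{i}\times\tilde{H}^{i}$, while you frame it as a zero sum forcing $h=\tilde{h}=0$; these are equivalent.
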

\begin{proof}
From \eqref{sum}, we have
\begin{align}\label{sum-2}
\ker(\mathscr{A}^{i})=\ran(\mathscr{A}^{i-1})+\left ( 
\begin{array}{cc}
I & l^{i+1}S^{i} \\
0 & I
\end{array}
\right )H^{i}\times \tilde{H}^{i}.
\end{align}
To verify that \eqref{sum-2} is a direct sum, we let 
$$
(\omega, \mu)\in \ran(\mathscr{A}^{i-1})\cap \left ( 
\begin{array}{cc}
I & l^{i+1}S^{i} \\
0 & I
\end{array}
\right )H^{i}\times \tilde{H}^{i},
$$
i.e., for some $\alpha $, $\beta$ and $h\in H^{i}$, $\tilde{h}\in \tilde{H}^{i}$, 
$$
\omega= D^{i-1}\alpha-S^{i-1}\beta, \quad \mu=\tilde{D}^{i-1}\beta, \quad \omega=h+ l^{i-1}S^{i-1}\tilde{h}, \quad \mu=\tilde{h}.
$$
Since $\tilde H^i$ represents the cohomology, it follows that $\mu=0$ and so $\beta\in\ker(\tilde D)$ and  $\omega\in H^i$.
Using the hypothesis $S\ker(\tilde D)\subset \ran(D)$ we have $\omega\in\ran(D)$ as well, and so $\omega=0$.
 \end{proof}

\subsection{From the twisted complex to the output complex}\label{step2}

In this section we prove Theorem \ref{thm:dimension} by splitting the twisted complex into two subcomplexes as outlined above.

Recall that $t^i:\mathbb{E}^i\to \tilde{\mathbb{E}}^{i-1}$ is the Moore--Penrose inverse of $s^{i-1}$, defined via \eqref{tsst}, and that $T^i=\mathrm{id}\otimes t^i$.
For future reference we establish some simple identities.
\begin{lemma}\label{lem8}
For each $i$,
\begin{gather}
\label{id1}
P_{\ker^\perp} \tilde D^{i-1} T^{i} = -T^{i+1} D^i P_\ran,\\
\label{id2}
P_{\ran^\perp} D^i P_\ran = 0,\\
\label{id3}
D^{i+1}P_\ran D^i = - D^{i+1}P_{\ran^\perp}D^i,\\
\label{id4}
P_{\ran^\perp}D^{i+1}P_{\ran^\perp} D^i = 0,\\
\label{id5}
P_{\ker} \tilde D^{i} P_{\ker} = \tilde D^i P_{\ker}.
\end{gather}
\end{lemma}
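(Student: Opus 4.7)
The plan is to establish the five identities using three ingredients only: the anticommutativity \eqref{SDDS}, the Moore--Penrose relations \eqref{tsst}, and the chain property $D^{i+1}D^i=0$. The identities are not independent, so I would order them so that each rests only on the preceding ones.

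First I would dispatch \eqref{id2} and \eqref{id5}, both of which follow directly from anticommutativity. For \eqref{id2}, any element of $V^i\otimes\ran(s^{i-1})$ is of the form $S^{i-1}\tilde u$ for some $\tilde u\in\tilde Z^{i-1}$, and then $D^iS^{i-1}\tilde u=-S^i\tilde D^{i-1}\tilde u$ lies in $\ran(S^i)\subseteq V^{i+1}\otimes\ran(s^i)$, so its $P_{\ran^\perp}$-component vanishes. For \eqref{id5}, if $\mu\in V^{i+1}\otimes\ker(s^i)$ then $S^i\mu=0$, so $S^{i+1}\tilde D^i\mu=-D^{i+1}S^i\mu=0$, i.e., $\tilde D^i\mu\in V^{i+2}\otimes\ker(s^{i+1})$, which is \eqref{id5}.

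I would then derive \eqref{id3} by inserting the resolution $I=P_\ran+P_{\ran^\perp}$ into $D^{i+1}D^i=0$. Identity \eqref{id4} is then immediate: \eqref{id3} allows me to replace $P_{\ran^\perp}$ in the middle by $-P_\ran$, after which \eqref{id2} applied at level $i+1$ kills the resulting expression,
\[
P_{\ran^\perp}D^{i+1}P_{\ran^\perp}D^i = -P_{\ran^\perp}D^{i+1}P_\ran D^i = 0.
\]

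The only identity requiring real work is \eqref{id1}, and I expect the main obstacle to be navigating the various projections and Moore--Penrose inverses at the correct levels. My plan is to apply $S^i$ to both sides and then use injectivity on $\ker(s^i)^\perp$. Both sides of \eqref{id1} take values in $V^{i+1}\otimes\ker(s^i)^\perp$: the left by construction, the right because $\ran(t^{i+1})=\ker(s^i)^\perp$ by definition of the Moore--Penrose inverse. On the left, $s^iP_{\ker(s^i)^\perp}=s^i$ collapses the projection, and anticommutativity together with $s^{i-1}t^i=P_{\ran(s^{i-1})}$ from \eqref{tsst} gives
\[
S^iP_{\ker^\perp}\tilde D^{i-1}T^i = S^i\tilde D^{i-1}T^i = -D^iS^{i-1}T^i = -D^iP_\ran.
\]
On the right, the relation $s^it^{i+1}=P_{\ran(s^i)}$ yields
\[
S^i(-T^{i+1}D^iP_\ran) = -P_\ran D^i P_\ran.
\]
These coincide precisely by \eqref{id2}. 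Since $S^i=\mathrm{id}\otimes s^i$ is injective on $V^{i+1}\otimes\ker(s^i)^\perp$, the two sides of \eqref{id1} must themselves agree, completing the proof.
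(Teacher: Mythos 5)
Your proof is correct, and for identities \eqref{id2}--\eqref{id5} it runs along essentially the same lines as the paper's (anticommutativity gives \eqref{id2} and \eqref{id5}; the resolution $I=P_\ran+P_{\ran^\perp}$ in $D^{i+1}D^i=0$ gives \eqref{id3}; and \eqref{id4} follows by combining \eqref{id2} and \eqref{id3}). For \eqref{id1}, however, you take a detour the paper avoids. The paper simply multiplies the anticommutativity relation $S^i\tilde D^{i-1}=-D^iS^{i-1}$ on the left by $T^{i+1}$ and on the right by $T^i$, then applies the Moore--Penrose relations \eqref{tsst} to both sides to obtain \eqref{id1} in one stroke, with no need for \eqref{id2} or any injectivity argument. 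You instead apply $S^i$ to both sides of the claimed identity, simplify each using \eqref{SDDS}, \eqref{tsst}, and \eqref{id2}, verify that both sides already live in $V^{i+1}\otimes\ker(s^i)^\perp$, and then invoke the injectivity of $S^i$ on that subspace to cancel it. That is logically sound and uses only the permitted ingredients, but it is the ``verify the answer'' direction rather than the ``derive it'' direction: it introduces an extra dependency (\eqref{id2}), and it requires the subspace check which the direct multiplication bypasses. If you simply sandwich \eqref{SDDS} between $T^{i+1}$ and $T^i$ from the start, the computation is shorter and self-contained, and you could keep the paper's ordering of the identities.
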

\begin{proof}
For the first identity multiply \eqref{SDDS} (with $i$ replaced by $i-1$)
on the left by $T^{i+1}$ and on the right by $T^i$ and use \eqref{tsst}.
The second identity holds because $D^i\ran(S^{i-1})\subset \ran(S^i)$, again due to \eqref{SDDS}.  The third is immediate from $D^{i+1}D^i=0$.  The left-hand side of
\eqref{id4} can be written as $P_{\ran^\perp}D^{i+1}(I-P_{\ran}) D^i$ which vanishes by \eqref{id2} and \eqref{id3}.  The identity \eqref{id5} holds because $\tilde D$ maps $\ker(S^{i-1})$ into $\ker(S^i)$ by \eqref{SDDS}.
\end{proof}

We now define a bounded linear map $\Pi^i:Y^i\to Y^i$ by
\begin{equation}\label{pidef}
\Pi^i(\omega,\mu) = 
\begin{cases}
\bigl(P_{\ran(S^{i-1})^\perp}\omega, T^{i+1}D^iP_{\ran(S^{i-1})^\perp}\omega\bigr), & 0\le i\le J,\\
\bigl(0,P_{\ker(S^i)}(\mu + \tilde D^{i-1}T^i\omega)\bigr), & J<i\le n.
\end{cases}
\end{equation}
The projections in \eqref{pidef} are defined such that the range of $\Pi^i$ is isomorphic to the output complex and the diagram commutes.
\begin{lemma}\label{lem9}
$\Pi^\bs:(Y^\bs,\mathscr{A}^\bs) \to (Y^\bs,\mathscr{A}^\bs)$ is a cochain projection.
\end{lemma}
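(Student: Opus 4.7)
The plan is to verify the two defining properties of a cochain projection in turn: idempotence $\Pi^i\circ\Pi^i=\Pi^i$ and the chain-map property $\mathscr{A}^i\Pi^i=\Pi^{i+1}\mathscr{A}^i$. Idempotence is a short direct computation in each of the two regimes of \eqref{pidef}. For $0\le i\le J$ it reduces to $P_{\ran(s^{i-1})^\perp}^2=P_{\ran(s^{i-1})^\perp}$, since $\Pi^i$ depends only on the first component of its input through $\alpha:=P_{\ran(s^{i-1})^\perp}\omega$. For $J<i\le n$ the first component of $\Pi^i(\omega,\mu)$ is already zero, which kills the $\tilde D^{i-1}T^i\omega$ summand on a second application and leaves $P_{\ker(s^i)}^2=P_{\ker(s^i)}$ acting on an element already in $\ker(s^i)$.

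For the chain-map property I would split into three cases by the position of $i$ relative to the critical index $J$: (a)~$0\le i<J$, (b)~$i=J$, and (c)~$J<i<n$. In each case I expand both $\mathscr{A}^i\Pi^i(\omega,\mu)$ and $\Pi^{i+1}\mathscr{A}^i(\omega,\mu)$ using \eqref{pidef} together with the tautologies $S^iT^{i+1}=P_{\ran(s^i)}$ and $T^{i+1}S^i=P_{\ker(s^i)^\perp}$. First components match immediately: in (a) both sides equal $P_{\ran(s^i)^\perp}D^i\omega$ via $P_{\ran(s^i)^\perp}S^i\mu=0$ and \eqref{id2}; in (b) both vanish because $s^J$ surjective gives $P_{\ran(s^J)}=I$; in (c) both vanish because $S^iP_{\ker(s^i)}=0$.

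The substance lies in matching the second components, where the $J$-injectivity/surjectivity hypothesis \eqref{injsurj} is decisive. In (a), $s^{i+1}$ is injective, so $\ker(s^{i+1})=0$ and hence $\tilde D^iT^{i+1}=P_{\ker(s^{i+1})^\perp}\tilde D^iT^{i+1}=-T^{i+2}D^{i+1}P_{\ran(s^i)}$ by \eqref{id1}; combining with $D^{i+1}D^i=0$ yields $\tilde D^iT^{i+1}D^i=T^{i+2}D^{i+1}P_{\ran(s^i)^\perp}D^i$, and a final application of \eqref{id2} shows this operator gives the same value on $\alpha$ as on $\omega$. In (c) the situation is symmetric: $s^{i-1}$ and $s^i$ are surjective so $P_{\ran(s^{i-1})}=P_{\ran(s^i)}=I$; by \eqref{id1} with $D^{i+1}D^i=0$, $\tilde D^iT^{i+1}D^i\omega$ lies in $V^{i+2}\otimes\ker(s^{i+1})$, making the outer $P_{\ker(s^{i+1})}$ harmless, while the $\mu$-contributions on the two sides cancel via $T^{i+1}S^i=I-P_{\ker(s^i)}$. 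Case (b) is the meeting point at $i=J$ where $s^J$ is bijective, $T^{J+1}=(S^J)^{-1}$, and both simplifications occur at once; the $P_{\ran(s^{J-1})}$-part of $\omega$ contributes $-\tilde D^J\tilde D^{J-1}T^J\omega=0$, so $\tilde D^JT^{J+1}D^J\alpha=\tilde D^JT^{J+1}D^J\omega$ matches the right-hand side. The main bookkeeping obstacle is tracking four distinct projections ($P_{\ran(s^{i-1})}$, $P_{\ran(s^i)}$, $P_{\ker(s^i)}$, $P_{\ker(s^{i+1})}$), each in a different finite-dimensional factor, and matching each simplification to injectivity or surjectivity of $s^k$ at the correct index $k$; once organized, Lemma~\ref{lem8} supplies all the needed algebra.
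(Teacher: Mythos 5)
Your proof is correct and follows essentially the same route as the paper's: you verify idempotence (a short computation in each of the two regimes) and the cochain property in the three regimes $i<J$, $i=J$, $J<i$, invoking the identities from Lemma~\ref{lem8}, the relations $S^iT^{i+1}=P_{\ran(s^i)}$ and $T^{i+1}S^i=P_{\ker(s^i)^\perp}$, and the $J$-injectivity/surjectivity hypothesis at precisely the same junctures (injectivity of $s^{i+1}$ for $i<J$, bijectivity of $s^J$, surjectivity of $s^{i-1},s^i$ for $i>J$). The only cosmetic differences from the paper are the order (you do idempotence first, the paper last) and the handling of the second component for $i<J$: the paper multiplies both sides by the injective $S$ and chases operator identities, while you instead use injectivity to deduce $P_{\ker(s^{i+1})^\perp}=I$ and rewrite $\tilde D^iT^{i+1}$ via \eqref{id1}; both are valid and of comparable length.
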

\begin{proof}
We first show that $\Pi^\bs$ is a cochain map (commutes with $\mathscr{A}^\bs$) and then that it is projection ($(\Pi^\bs)^2=\Pi^\bs$).

To establish commutativity that $\Pi^{i+1}\mathscr{A}^i=\mathscr{A}^i\Pi^i$ for $i<J$,
we must show, in matrix notation, that
$$
\begin{pmatrix}
P_{\ran^\perp} & 0 \\ TDP_{\ran^\perp} & 0
\end{pmatrix}
\begin{pmatrix}
D & -S \\ 0 & \tilde D
\end{pmatrix}
=
\begin{pmatrix}
D & -S \\ 0 & \tilde D
\end{pmatrix}
\begin{pmatrix}
P_{\ran^\perp} & 0 \\ TDP_{\ran^\perp} & 0
\end{pmatrix}.
$$
The left-hand side simplifies to
$$
\begin{pmatrix}
P_{\ran^\perp}D & 0 \\ TDP_{\ran^\perp}D & 0
\end{pmatrix}
$$
since $P_{\ran^\perp} S = 0$.  Comparing to the right-hand side we must show
the two equations
\begin{equation}\label{twoe}
P_{\ran^\perp}D = D P_{\ran^\perp} - S TDP_{\ran^\perp}, \quad
TDP_{\ran^\perp}D = \tilde D TDP_{\ran^\perp}.
\end{equation}
In view of \eqref{tsst}, the right-hand side of the first of these equations is
$(I-P_\ran)DP_{\ran^\perp} = P_{\ran^\perp}DP_{\ran^\perp}$.
This indeed equals $P_{\ran^\perp}D$ by \eqref{id2}.
Since $i< J$ and so $S^{i+1}$ is injective, it suffices to prove the second equation in \eqref{twoe} after multiplying both sides on the left by $S$.  Using \eqref{tsst}, \eqref{id4}, \eqref{id2}, \eqref{id3}, \eqref{tsst}, and \eqref{SDDS} we get
\begin{multline*}
ST D P_{\ran^\perp} D = P_\ran D P_{\ran^\perp} D = DP_{\ran^\perp}D=
DP_{\ran^\perp}DP_{\ran^\perp}
\\
= - D P_\ran DP_{\ran^\perp} = - D S T DP_{\ran^\perp}
= S\tilde D T DP_{\ran^\perp},
\end{multline*}
as desired.  This completes the proof of commutativity for $i< J$.

Next we show commutativity for $i=J$, which comes down to
$$
\begin{pmatrix}
0 & 0 \\ P_\ker \tilde D^J (S^J)^{-1} & P_\ker
\end{pmatrix}
\begin{pmatrix}
D^J & -S^J \\ 0 & \tilde D^J
\end{pmatrix}
=
\begin{pmatrix}
D^J & -S^J \\ 0 & \tilde D^J
\end{pmatrix}
\begin{pmatrix}
P_{\ran^\perp} & 0 \\ (S^J)^{-1} D^JP_{\ran^\perp} & 0
\end{pmatrix}.
$$
This reduces to the equation
$$
P_\ker \tilde D^J (S^J)^{-1}D^J =  \tilde D^J (S^J)^{-1}D^JP_{\ran^\perp}.
$$
This is true since both sides equal $\tilde D^J (S^J)^{-1}D^J$.  Indeed
$S^{J+1} \tilde D^J (S^J)^{-1}D^J=0$ by \eqref{SDDS}, so $\tilde D^J (S^J)^{-1}D^J\in\ker(S)$,
and similarly $\tilde D^J (S^J)^{-1}D^JS^{J-1}=0$ so $\tilde D^J (S^J)^{-1}D^JP_\ran=0$.

For commutativity in the case $i>J$, we must verify that
$$
\begin{pmatrix}
0 & 0 \\ P_\ker \tilde D T & P_\ker
\end{pmatrix}
\begin{pmatrix}
D & -S \\ 0 & \tilde D
\end{pmatrix}
=
\begin{pmatrix}
D & -S \\ 0 & \tilde D
\end{pmatrix}
\begin{pmatrix}
0 & 0 \\ P_\ker \tilde D T & P_\ker
\end{pmatrix}.
$$
The top row of each product vanishes (using $SP_\ker=0$).  This leaves the equations
$$
P_\ker \tilde D T D = \tilde D P_\ker \tilde D T, \quad -P_\ker \tilde D T S + P_\ker \tilde D = \tilde DP_\ker.
$$
For the first we use that $S^{i-1}T^{i}=I$ for $i>J$, whence $D^i = D^i S^{i-1}T^i = -S^i \tilde D^{i-1} T^i$, so
\begin{multline*}
P_\ker \tilde D T D = -P_\ker \tilde D T S\tilde D T =
-P_\ker \tilde D P_{\ker^\perp} \tilde D T
\\
=
P_\ker \tilde D (I -P_{\ker^\perp}) \tilde D T =
P_\ker \tilde D P_{\ker} \tilde D T = \tilde D P_{\ker} \tilde D T.
\end{multline*}
again invoking \eqref{id5}.
For the second equation,  we rewrite the left-hand side as
$$
P_\ker\tilde D(I-P_{\ker^\perp})=P_\ker\tilde D P_{\ker} = \tilde D P_{\ker},
$$
where we have invoked \eqref{id5} in the last step.

Having established that $\Pi^\bs$ is a cochain map we now check that it is a projection, i.e., that the two matrices
$$
\begin{pmatrix}
P_{\ran^\perp} & 0 \\ TDP_{\ran^\perp} & 0
\end{pmatrix},\quad
\begin{pmatrix}
0 & 0 \\ P_\ker \tilde D^J T & P_\ker
\end{pmatrix}
$$
are idempotent.  This is immediate using the fact that $P_{\ran^\perp}$ and $P_\ker$ are projections.
\end{proof}

From the lemma, it follows directly that the twisted complex $Y^\bs=(Y^\bs,\mathscr{A}^\bs)$ splits into a direct sum of two subcomplexes, $\Pi^\bs Y^\bs$ and $(I-\Pi^\bs)Y^\bs$,
and, consequently that the $i$th cohomology space $\mathcal H^i(Y^\bs)$ is isomorphic to
the direct sum $\mathcal H^i(\Pi^\bs Y^\bs)$ and $\mathcal H^i((I-\Pi^\bs) Y^\bs)$.
We shall show (in Lemma~\ref{lem10}) that the second subcomplex, $(I-\Pi^\bs)Y^\bs$, has vanishing cohomology, and consequently that the cohomology of the complex $Y^\bs$ is isomorphic (under the map induced by $\Pi^\bs$) to the cohomology of subcomplex $\Pi^\bs Y^\bs$.
We will then show (in Lemma~\ref{lem11}) that the subcomplex $\Pi^\bs Y^\bs$ is
isomorphic, as a complex, to the ouput complex \eqref{reduced-complex}.

From the definition \eqref{pidef} of the bounded cochain projection $\Pi^\bs$, we easily identify
the subcomplexes $\Pi^\bs Y^\bs$ and $(I-\Pi^\bs) Y^\bs$:
\begin{equation}\label{PiY}
\Pi^i Y^i = 
\begin{cases}
\{\,(\omega,TD\omega)\,:\, \omega\in \ran(S^{i-1})^\perp \,\} & 0\le i\le J,\\
0 \x \ker(S^i), & J<i\le n,
\end{cases}
\end{equation}
and
\begin{equation}\label{I-PiY}
(I-\Pi^i)Y^i = 
\begin{cases}
\ran(S^{i-1})\x\tilde Z^i & 0\le i\le J,\\
\{\,(\omega,\mu-P_\ker \tilde D T\omega)\,:\, \omega\in Z^i,\ \mu\in\ker(S^i)^\perp\,\}, & J<i\le n.
\end{cases}
\end{equation}

\begin{lemma}\label{lem10}
The complex $((I-\Pi^\bs)Y^\bs,\mathscr{A}^\bs)$ is exact.
\end{lemma}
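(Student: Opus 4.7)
The plan is to prove exactness by exhibiting, for each cocycle, an explicit coboundary preimage. Since $\Pi^\bs$ is a cochain projection (Lemma~\ref{lem9}), so is $I-\Pi^\bs$, and $((I-\Pi^\bs)Y^\bs,\mathscr{A}^\bs)$ is a subcomplex of $(Y^\bs,\mathscr{A}^\bs)$, so only surjectivity of $\mathscr{A}^{i-1}$ onto the cocycles in $(I-\Pi^i)Y^i$ remains. The key claim is that for any $(\omega,\nu)\in(I-\Pi^i)Y^i\cap\ker(\mathscr{A}^i)$ with $i\ge 1$, the uniform candidate $(0,-T^i\omega)\in Y^{i-1}$ both lies in $(I-\Pi^{i-1})Y^{i-1}$ and satisfies $\mathscr{A}^{i-1}(0,-T^i\omega)=(\omega,\nu)$; the case $i=0$ is degenerate since $(I-\Pi^0)Y^0=0\times\tilde Z^0$ and the cocycle condition combined with the injectivity of $s^0$ immediately forces $\nu=0$.

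For $i\ge 1$, expanding gives $\mathscr{A}^{i-1}(0,-T^i\omega)=(S^{i-1}T^i\omega,-\tilde{D}^{i-1}T^i\omega)$. The first coordinate equals $P_{\ran(S^{i-1})}\omega$, which is $\omega$ in both regimes: for $i\le J$ because $(I-\Pi^i)Y^i=\ran(S^{i-1})\times\tilde Z^i$ by definition puts $\omega\in\ran(S^{i-1})$, and for $i>J$ because surjectivity of $s^{i-1}$ yields $P_{\ran}=I$ on $Z^i$. The heart of the argument is the second-coordinate identity $\nu=-\tilde{D}^{i-1}T^i\omega$. Setting $\alpha:=T^i\omega$, the cocycle equation $D\omega-S\nu=0$ combined with the anticommutativity \eqref{SDDS} gives $S^i(\nu+\tilde D\alpha)=0$, i.e., $\nu+\tilde D\alpha\in\ker(S^i)$. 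For $i\le J$, injectivity of $s^i$ forces $\nu+\tilde D\alpha=0$ immediately. For $i>J$, injectivity is lost, and I would invoke the defining relation $P_{\ker(S^i)}\nu=-P_{\ker(S^i)}\tilde{D}T^i\omega$ of $(I-\Pi^i)Y^i$; combined with $T^i\omega=\alpha$ (which follows from $\alpha\in\ker(S^{i-1})^\perp$ and \eqref{tsst}), this gives $P_{\ker(S^i)}(\nu+\tilde D\alpha)=0$, and since $\nu+\tilde D\alpha$ already lies in $\ker(S^i)$ it must vanish. The extremal case $i=n$ is subsumed here: $S^n=0$ makes the defining condition of $(I-\Pi^n)Y^n$ collapse to $\nu=-\tilde{D}T^n\omega$, while $\mathscr{A}^n=0$ makes every element a cocycle, so the same preimage formula recovers it.

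Finally I would verify $(0,-T^i\omega)\in(I-\Pi^{i-1})Y^{i-1}$. For $i-1\le J$ this is the triviality $0\in\ran(S^{i-2})$ together with $-T^i\omega\in\tilde Z^{i-1}$; for $i-1>J$ the membership condition reduces to $P_{\ker(S^{i-1})}(-T^i\omega)=0$, which is exactly the Moore--Penrose property $T^i\omega\in\ker(S^{i-1})^\perp$. The step requiring the most care is bookkeeping across the two regimes $i\le J$ and $i>J$, especially at $i=J+1$ where the characterization of $(I-\Pi^i)Y^i$ transitions from a product to a graph cut out by a projection constraint, and where the proof shifts from exploiting injectivity of $s^i$ to exploiting surjectivity of $s^{i-1}$ via the identity \eqref{tsst}; however, the uniform preimage formula $(0,-T^i\omega)$ allows both regimes to be treated by parallel but distinct verifications.
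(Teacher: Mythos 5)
Your proof is correct and takes essentially the same approach as the paper: both split into the regimes $i\le J$ and $i>J$, both exploit the characterization \eqref{I-PiY} of $(I-\Pi^i)Y^i$, both use injectivity of $S^i$ on one side and surjectivity of $s^{i-1}$ (via \eqref{tsst}) on the other, and both exhibit the explicit preimage $(0,-T^i\omega)$ (the paper writes this as $(0,-\beta)$ with $\omega=S\beta$ when $i\le J$, which is the same preimage up to a $\ker(S^{i-1})$-component that is anyway annihilated). The only cosmetic improvement in your version is stating the preimage uniformly across both regimes before verifying each separately.
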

\begin{proof}
First suppose $i\le J$.  A typical element of $y$ of $(I-\Pi^i)Y^i$ can be written as
$y  = (S\beta,\mu)$ for some $\beta\in \tilde Z^{i-1}$, $\mu\in \tilde Z^i$.  If $y\in\ker(\mathscr{A}^i)$, then 
$0 = DS\beta-S\mu=-S(\tilde D \beta+\mu)$. Since $S$ is injective, this implies $\mu=-\tilde D\beta$.  Then $y=\mathscr{A}(0,-\beta)$ and $(0,-\beta)\in(I-\Pi^{i-1})Y^{i-1}$.  This establishes exactness for $i\le J$.

Now suppose that $i>J$, and let $y=(\omega,\mu-P_\ker \tilde D T\omega)$ for some
$\omega\in Z^i$, $\mu\in\ker(S)^\perp$, a typical element of $(I-\Pi^i)Y^i$. If $y\in\ker(\mathscr{A}^i)$, then
$$
D\omega = S\mu, \quad  \tilde{D}\left ( \mu-P_{\ker}\tilde{D}T \omega\right )=0.
$$
Combining \eqref{id1}, the first of these equations, and \eqref{tsst}, we get
$$
P_{\ker^{\perp}}\tilde D T \omega = -T D \omega = -TS\mu = -P_{\ker^{\perp}}\mu=-\mu,
$$
from which it follows that $\tilde D T\omega = P_{\ker}\tilde{D}T\omega-\mu$.
Therefore,
$$
\mathscr{A} (0,-T\omega)= (\omega, \mu-P_{\ker}\tilde{D}T\omega) = y
$$
and $(0,-T\omega)\in(I-\Pi^{i-1})Y^{i-1}$.
\end{proof}

Next we show that there is a simple isomorphism from the subcomplex $(\Pi^\bs Y^\bs,\mathscr{A}^\bs)$ to the output complex \eqref{reduced-complex}.

\begin{lemma}\label{lem11}
Define $\Phi^i:\Pi^i Y^i\to \Upsilon^i$ by
\begin{equation}\label{Phi}
\Phi^i(\omega,\mu) = 
\begin{cases}
\omega & 0\le i\le J,\\
\mu, & J<i\le n.
\end{cases}
\end{equation}
Then $\Phi^i$ is an isomorphism and $\mathscr{D}^i\Phi^i = \Phi^{i+1}\mathscr{A}^i$.  It follows that
$(\Upsilon^\bs,\mathscr{D}^\bs)$  is a bounded Hilbert complex and that $\Phi^\bs:(\Pi^\bs Y^\bs,\mathscr{A}^\bs) \to (\Upsilon^\bs,\mathscr{D}^\bs)$ is an isomorphism of complexes.
\end{lemma}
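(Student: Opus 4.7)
The plan is to verify the three claims directly from the explicit descriptions of $\Pi^i Y^i$ in \eqref{PiY} and of $\Upsilon^i$, $\mathscr{D}^i$ in \eqref{defUps}--\eqref{Dder}. The heart of the argument is just unpacking definitions; once bijectivity and intertwining are established, boundedness of $\Phi^i$ and its inverse follow from boundedness of the operators $D^i$, $T^i$, and the projections, and the fact that $(\Upsilon^\bs,\mathscr{D}^\bs)$ is a bounded Hilbert complex is transferred from $(\Pi^\bs Y^\bs,\mathscr{A}^\bs)$ through the isomorphism.

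First I would show that $\Phi^i$ is a linear isomorphism with bounded inverse. For $0\le i\le J$, the inverse sends $\omega\in \ran(S^{i-1})^\perp=\Upsilon^i$ to $(\omega,T^{i+1}D^i\omega)\in \Pi^iY^i$, which is bounded, and the composition in either order is the identity by \eqref{PiY}. For $J<i\le n$, $\Phi^i$ is just the projection onto the second factor of $0\x\ker(S^i)$, which is trivially an isomorphism onto $\Upsilon^i=V^{i+1}\otimes \ker(s^i)$.

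Next I would verify the intertwining $\mathscr{D}^i\Phi^i=\Phi^{i+1}\mathscr{A}^i$ on $\Pi^iY^i$, noting that by Lemma~\ref{lem9} the right-hand side lands in $\Pi^{i+1}Y^{i+1}$ so $\Phi^{i+1}$ applies. There are three cases. For $i<J$ and $(\omega,T^{i+1}D^i\omega)\in\Pi^iY^i$, the first component of $\mathscr{A}^i(\omega,T^{i+1}D^i\omega)$ is $D^i\omega-S^iT^{i+1}D^i\omega=(I-P_{\ran(s^i)})D^i\omega=P_{\ran^\perp}D^i\omega$ by \eqref{tsst}, which matches $\mathscr{D}^i\omega$. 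For $i=J$, the linking map $s^J$ is bijective so $T^{J+1}=(S^J)^{-1}$; the first component of $\mathscr{A}^J(\omega,T^{J+1}D^J\omega)$ vanishes and the second is $\tilde D^J(S^J)^{-1}D^J\omega=\mathscr{D}^J\omega$, and one can check directly (or recall from the proof of Lemma~\ref{lem9}) that $\tilde D^J(S^J)^{-1}D^J\omega\in\ker(s^{J+1})$, as required. For $i>J$ and $(0,\mu)\in 0\x\ker(S^i)$, one has $\mathscr{A}^i(0,\mu)=(-S^i\mu,\tilde D^i\mu)=(0,\tilde D^i\mu)$, and $\Phi^{i+1}(0,\tilde D^i\mu)=\tilde D^i\mu=\mathscr{D}^i\mu$, with $\tilde D^i\mu\in\ker(s^{i+1})$ thanks to \eqref{SDDS}.

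No step should be a serious obstacle: the only mildly subtle points are (i) keeping track of the two descriptions of the target space for $\mathscr{A}^J$ at the transition index $J$, which requires the bijectivity of $s^J$, and (ii) confirming that every output indeed lies in the appropriate kernel/orthogonal complement, which in each case is immediate from the anticommutativity \eqref{SDDS} or from \eqref{tsst}. Having shown $\Phi^\bs$ is an isomorphism of cochain complexes, the conclusion that $(\Upsilon^\bs,\mathscr{D}^\bs)$ is a bounded Hilbert complex follows: the spaces $\Upsilon^i$ are closed subspaces of the Hilbert spaces $Z^i$ or $\tilde Z^i$ (hence Hilbert), and the differentials $\mathscr{D}^i$ are bounded because $D^i$, $\tilde D^i$, $T^i$, and the orthogonal projections are all bounded.
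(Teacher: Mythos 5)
Your proposal is correct and takes the same route as the paper: the paper's proof is a one-line appeal to the explicit formulas \eqref{PiY}, \eqref{defUps}--\eqref{Dder}, and you have simply filled in the routine verification (the three cases $i<J$, $i=J$, $i>J$, all by direct computation using \eqref{tsst} and \eqref{SDDS}, with Lemma~\ref{lem9} used to ensure the image lies in $\Pi^{i+1}Y^{i+1}$). Nothing is missing, and the approach is identical.
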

\begin{proof}
From the formulas \eqref{I-PiY} for  $\Pi^i Y^i$ and \eqref{defUps} for $\Upsilon^i$, it is easy to see that
$\Phi^i$ defines an isomorphism between them.  It is also straightforward from the definition of their differentials
to show that $\Phi^\bs$ is a cochain map.
\end{proof}

Combining Lemmas~\ref{lem6}--\ref{lem11} we have established Theorem~\ref{thm:dimension}. 

Finally, to prove Theorem \ref{thm:rep}, we construct cochain maps from the sum complex $(Y^{\bs}, \mathcal{D}^{\bs})$ to the output complex $(\Upsilon^{\bs}, \mathscr{D}^{\bs})$, where 
$$
\mathcal{D}^{i}=\left ( \begin{array}{cc}
D^{i} & 0\\
0 & \tilde{D}^{i}
\end{array}
\right ).
$$
The first step is to consider a cochain projection $\mathscr{Q}^{\bs}$ from the sum complex $(Y^{\bs}, \mathcal{D}^{\bs})$ to the twisted complex $(Y^{\bs}, \mathscr{A}^{\bs})$, defined by 
$$
{Q}^{i}=\left ( \begin{array}{cc}
I & K^{i}\\
0 & I
\end{array}
\right ), \quad 0\leq i\leq n.
$$
Note that $Q^{i}$ defined above is invertible.
From \eqref{SDKKD}, we get the commutativity $\mathscr{A}^{i}Q^{i}=Q^{i}\mathcal{D}^{i}$. So $Q^{\bs}$ is a cochain isomorphism. Recall that we already defined the cochain maps $\Pi^{\bs}$ \eqref{pidef} from the twisted complex $(Y^{\bs}, \mathscr{A}^{\bs})$ to its subcomplex $(\Pi^\bs Y^\bs, \mathscr{A}^{\bs})$, and $\Phi^{\bs}$ \eqref{Phi} from $(\Pi^\bs Y^\bs, \mathscr{A}^{\bs})$ to the output complex $(\Upsilon^{\bs},   \mathscr{D}^{\bs})$.  Composing the maps $Q^{\bs}$, $\Pi^\bs$ and $\Phi^{\bs}$, we obtain $\mathscr{K}^{\bs}$ defined in \eqref{def:Kmap}.  Since $Q^{\bs}$, $\Pi^\bs$ and $\Phi^{\bs}$ are all cochain maps and induce isomorphism on cohomology, we conclude that so does $\mathscr{K}^{\bs}$. This proves Theorem \ref{thm:rep}.

\ifpreprint
\subsubsection*{Acknowledgements}
The authors are grateful to Andreas \v{C}ap, Snorre Christiansen, Victor Reiner, Espen Sande, and Ragnar Winther for numerous valuable discussions related to this work.
\else
\begin{acknowledgements} The authors are grateful to Andreas \v{C}ap, Snorre Christiansen, Victor Reiner, Espen Sande, and Ragnar Winther for numerous valuable discussions related to this work.  
\end{acknowledgements}
\fi

\section*{Appendix 1. Proof of injectivity/surjectivity condition}

In this appendix we prove Lemma~\ref{lem:s-insurj}.  
Let $n>0$  and $0\le k< n$, $1\le m\le n$ be integers. The linear map $s=s^{k,m}$
is given by
$$
s:
\alt^k\R^n\otimes\alt^m\R^n \to \alt^{k+1}\R^n\otimes\alt^{m-1}\R^n
$$
by
\begin{multline}
s(v^{1}\wedge\cdots\wedge v^{k} \otimes v^{k+1}\wedge\cdots\wedge v^{k+m})
\\=\sum_{l=1}^m (-1)^{l-1} v^{k+l}\wedge v^{1}\wedge\cdots\wedge v^{k}
 \otimes v^{k+1}\wedge\cdots\wedge\widehat{v^{k+l}}\wedge\cdots\wedge v^{k+m}.
\end{multline}
Our goal is to show that $s$ is injective if $k\le m-1$ and surjective if $k\ge m-1$.

We begin with some notation.
For $n$ and $p$ natural numbers we write $[n]$ for $\{1,\ldots,n\}$ so $[n]^p$ denotes the
set of $p$-tuples of elements of $[n]$. We use standard indexing notation, so
an element $\sigma\in [n]^p$ can be written $(\sigma_1,\ldots,\sigma_p)$. The symmetric group $S_n$, the set
of permutations of $[n]$, may be viewed as a subset of $[n]^n$.  If also $0\le k\le p$, we define
$$
X(n,p,k) = \{\sigma\in [n]^p:\sigma_1<\cdots<\sigma_k,\ \sigma_{k+1}<\cdots<\sigma_p\},
$$
the set of $p$-tuples which are strictly increasing in the first $k$ indices and in the remaining $p-k$ indices.
To each $\sigma\in [n]^p$ we may associate
$$
dx^\sigma:= dx^{\sigma_1}\wedge\cdots\wedge dx^{\sigma_{k}}\otimes dx^{\sigma_{k+1}}\wedge\cdots\wedge dx^{\sigma_{n}}\in \alt^k\R^n\otimes\alt^{p-k}\R^n.
$$
where the $dx^i$ are the usual basis elements of the dual space of $\R^n$. The $dx^\sigma$ for $\sigma\in X(n,p,k)$ then form the standard basis for $\alt^k\R^n\otimes\alt^{p-k}\R^n$.

Turning to the proof of Lemma~\ref{lem:s-insurj}, we first consider the case $m=n-k$.  In this case,
$$
s:\alt^k\R^n\otimes\alt^{n-k}\R^n\to\alt^{k+1}\R^n\otimes\alt^{n-k-1}\R^n
$$
and we wish to show injectivity for $n-2k-1\ge0$ and surjectivity for $n-2k-1\le0$.
Given a subset $I\subset[n]$ of cardinality $k$, let $\sigma\in S_n\cap X(n, n, k)$ be
the unique element for which $\{\sigma_1,\ldots,\sigma_k\} = I$ and set
$\omega(I) = \sign(\sigma)dx^\sigma \in \alt^k\R^n\otimes\alt^{n-k}\R^n$.
Let $W(n,k)$ denote the subspace of $\alt^k\R^n\otimes\alt^{n-k}\R^n$ spanned by the elements $\omega(I)$
for all subsets $I$ of $[n]$ of cardinality $k$.
It then follows from the definition of $s$ that
$$
s\omega(I) = (-1)^{k} \sum_{j\in[n]\setminus I}\omega(I\cup\{j\}).
$$
In particular, $sW(n,k)\subset W(n,k+1)$.  We define an inner product on each $W(n,k)$ by declaring the basis elements
$\omega(I)$ to be orthonormal.  Then the adjoint $s^*:W(n,k+1)\to W(n,k)$ is given by
$$
s^*\omega(J) = (-1)^{k} \sum_{j\in J} \omega(J\setminus\{j\}), \quad J\subset[n], \ \#J=k+1.
$$
The next result shows the desired injectivity and surjectivity in the case $m=n-k$, but only for
the restriction of $s$ to a map from $W^k$ to $W^{k+1}$.
\begin{lemma}\label{lem:injsur-1}
If $n-2k-1\ge 0$, then $s:W(n,k)\to W(n,k+1)$ is injective.  If $n-2k-1\le0$, then it is surjective.
\end{lemma}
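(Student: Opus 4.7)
The plan is to identify $s|_{W(n,k)}$ with (up to a sign) the classical up-operator on the Boolean lattice of subsets of $[n]$, and then exploit a standard commutator identity together with a one-line positivity argument.

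Define $U_k\colon W(n,k)\to W(n,k+1)$ by $U_k\omega(I):=\sum_{j\in[n]\setminus I}\omega(I\cup\{j\})$ and $D_k\colon W(n,k)\to W(n,k-1)$ by $D_k\omega(J):=\sum_{j\in J}\omega(J\setminus\{j\})$. A short bookkeeping check (moving a single wedge factor past $k$ entries in the first tensor slot) shows that $s|_{W(n,k)}=(-1)^k U_k$. Declaring $\{\omega(I)\}$ orthonormal on each $W(n,k)$, one has $D_{k+1}=U_k^*$, hence $s^*|_{W(n,k+1)}=(-1)^k D_{k+1}$ and $s^*s=D_{k+1}U_k$.

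The key ingredient is the commutator identity
\begin{equation*}
D_{k+1}U_k-U_{k-1}D_k=(n-2k)I\quad\text{on }W(n,k),
\end{equation*}
which one verifies by direct computation: applied to a basis element $\omega(I)$, both compositions produce the same off-diagonal sum $\sum_{i\in I,\,j\notin I}\omega((I\setminus\{i\})\cup\{j\})$, plus diagonal contributions $(n-k)\omega(I)$ and $k\omega(I)$, respectively, whose difference is $(n-2k)\omega(I)$.

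With these ingredients the two assertions reduce to positivity. For injectivity when $n-2k-1\ge 0$: if $sx=0$ then $s^*sx=0$, so $U_{k-1}D_kx=-(n-2k)x$; pairing with $x$ and using $U_{k-1}^*=D_k$ yields
\begin{equation*}
\|D_kx\|^2+(n-2k)\|x\|^2=0,
\end{equation*}
forcing $x=0$ since $n-2k\ge 1$. For surjectivity when $n-2k-1\le 0$, it suffices (by duality) to show $D_{k+1}$ is injective on $W(n,k+1)$: if $D_{k+1}y=0$, the commutator at level $k+1$ gives $D_{k+2}U_{k+1}y=(n-2k-2)y$, whence
\begin{equation*}
\|U_{k+1}y\|^2=(n-2k-2)\|y\|^2,
\end{equation*}
forcing $y=0$ since $n-2k-2\le -1$. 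The only step requiring care is the sign check identifying $s|_{W(n,k)}$ with $(-1)^kU_k$; the commutator identity and the positivity arguments are then formal, and the boundary cases $k=0$ and $k=n-1$ work out because the corresponding $U_{-1}$, $D_0$, $U_n$, $D_{n+1}$ all vanish and drop out of the computation.
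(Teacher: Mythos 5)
Your proof is correct and is essentially the same as the paper's: after the sign check $s|_{W(n,k)}=(-1)^kU_k$, your commutator identity $D_{k+1}U_k-U_{k-1}D_k=(n-2k)I$ is exactly the paper's key inner-product identity $\|s\rho\|^2=\|s^*\rho\|^2+(n-2k)\|\rho\|^2$ rewritten in operator form ($s^*s-ss^*=(n-2k)I$ on $W(n,k)$), and both then conclude by a one-line positivity/injectivity-of-adjoint argument. The only stylistic difference is that you invoke the standard up/down-operator framework on the Boolean lattice, whereas the paper derives the identity directly from the inner-product computations $\langle s\omega(J),s\omega(K)\rangle$ and $\langle s^*\omega(J),s^*\omega(K)\rangle$; the content is identical.
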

\begin{proof}
Let $J,K$ be subsets of $[n]$ of cardinality $k$. Then
$$
\langle s\omega(J),s\omega(K)\rangle =
\begin{cases}
n-k, & J=K,\\
1, & \#J\cap K=k-1,\\
0,& \text{else},
\end{cases}
$$
and
$$
\langle s^*\omega(J),s^*\omega(K)\rangle =
\begin{cases}
k, & J=K,\\
1, & \#J\cap K=k-1,\\
0,& \text{else}.
\end{cases}
$$
It follows that
$$
\langle s\omega(J),s\omega(K)\rangle =\langle s^*\omega(J),s^*\omega(K)\rangle + (n-2k)\langle \omega(J),\omega(K)\rangle,
$$
and, by bilinearity, that
\begin{equation}\label{sident}
\langle s\rho,s\tau\rangle =
\langle s^*\rho,s^*\tau\rangle + (n-2k)\langle \rho,\tau\rangle,\quad \rho,\tau\in W(n,k).
\end{equation}
Taking $\tau=\rho$ and assuming that $n-2k-1\ge 0$, we see that
$s\rho=0$ implies $\rho=0$, so $s$ is injective as claimed.

If we replace $k$ by $k+1$ in \eqref{sident} and assume that $n-2k-1\le 0$, the same argument implies that
$s^*:W(n,k+1)\to W(n,k)$ is injective, and consequently that $s$ is surjective.
\end{proof}

Now we return to general $n\ge1$, $0\le k<n$, $1\le m\le n$, and the map $s$ acting on all
of $\alt^k\R^n\otimes\alt^{m}\R^n$.  To prove surjectivity, assuming $k\ge m-1$, we must show that $s$ maps onto
all of $\alt^{k+1}\R^n\otimes \alt^{m-1}\R^n$.  For this it is enough to take an element
of the form
$$
\rho = v^1\wedge\cdots\wedge v^{k+1}\otimes v^{k+2}\wedge\cdots\wedge v^{k+m}
$$
with the $v^i$ belonging to the dual of $\R^n$, and show that $\rho$ is in the range of $s$.

Let $p=m+k$ and define a linear map from the dual space of $\R^p$ to that of $\R^n$
by $T dx^i = v^i$, $i=1,\ldots,p$.
Then $T$ induces a linear map
$$
T_*:\alt^k\R^p\otimes\alt^m\R^p \to \alt^k\R^n\otimes\alt^m\R^n
$$
given by
$$
T_*(u^1\wedge\cdots\wedge u^k\otimes u^{k+1}\wedge\cdots\wedge u^{k+m}) =
(Tu^1\wedge\cdots\wedge Tu^k\otimes Tu^{k+1}\wedge\cdots\wedge Tu^{k+m}).
$$
Clearly, $T_*s = sT_*$ and, letting
$$
\omega = dx^1\wedge\cdots\wedge dx^{k+1}\otimes dx^{k+2}\wedge\cdots\wedge dx^{k+m},
$$
we have $T_*\omega = \rho$.  Since $\omega\in W(n,k+1)$, the preceding lemma insures that $\omega=s \mu$ for some $\mu\in W(n,k)\subset \alt^k\R^p\otimes\alt^m\R^p$.  Therefore
$$
\rho = T_*\omega=T_*s\mu= s(T_*\mu).
$$
This completes the proof of surjectivity.

We now prove the injectivity for general $n$, $k$, and $m$, continuing to write $p=m+k$.
For $\sigma\in X(n,p,k)$ let $\tilde\sigma\in[n]^p$ denote the tuple obtained from $\sigma$ by taking its
entries in non-decreasing order.  For example, if $\sigma=(2,3,1,2)\in X(3,4,2)$ (so increasing on its first 2 and last 2 indices), then $\tilde\sigma = (1,2,2,3)$.  Then we have the direct sum decomposition
$$
\alt^k\R^n\otimes\alt^m\R^n = \bigoplus_{J\in[n]^p} Y(n,p,k,J),
$$
where
$$
Y(n,p,k,J)=\spn\{dx^\sigma:\sigma\in X(n,p,k),\ \tilde\sigma=J\}.
$$
Of course, there is a similar decomposition for $\alt^{k-1}\R^n\otimes\alt^{m+1}\R^n$. The two decompositions are
compatible with $s$, in the sense that $sY(n,p,k,J)\subset Y(n,p,k+1,J)$ for the same $J$.
It follows that it is enough to prove that $s$ is injective when restricted to each of the spaces
$Y(n,p,k,J)$, $J\in[n]^p$.  The $p$-tuple $J$ consists of entries which appear only once and entries which appear twice.
Let $l$ be the number of repeated entries, so that there are $q:=p-2l$ non-repeated entries.  Without loss of generality, we may assume that the non-repeated entries are $1,\ldots,q$ and the repeated entries $q+1,\ldots,q+l$, i.e.,
$$
J = (1,2,\ldots,q,q+1,q+1, q+2, q+2,\ldots,q+l,q+l).
$$
The space $S_{p-2l}\cap X(p-2l,p-2l,k-l)$ consists of permutations of $[p-2l]$ which are increasing in their first $k-l$ and last $m-l$ indices.  If $\rho$ belongs to this space, we define $Q\rho$ as the $p$-tuple
$$
Q\rho = (\rho_1,\ldots,\rho_{k-l},q+1, q+2,\ldots,q+l, \rho_{k-l+1}, \ldots, \rho_{p-2l},q+1, q+2,\ldots,q+l). 
$$
This defines a bijection of $S_{p-2l}\cap X(p-2l,p-2l,k-l)$ onto 
$\{\sigma\in X(n,p,k),\ \tilde\sigma=J\}$.
Now we consider the spaces spanned by the basis functions $dx^\sigma$ where $\sigma$ varies in one of these two bijective sets.
These spaces are precisely $W(p-2l,k-l)$ and $Y(n,p,k,J)$, respectively, and the bijection just established induces an isomorphism
$F:Y(n,p,k,J)\to W(p-2l,k-l)$, given by
$$
dx^{Q\rho} \mapsto dx^\rho.
$$
It is easy to see that $Fs=sF$.  If $\omega\in Y(n,p,k,J)$ and $s\omega=0$, then $F\omega\in W(p-2l,k-l)$ and $sF\omega=0$,
so, by Lemma~\ref{lem:injsur-1}, $F\omega=0$, so $\omega=0$.  This completes the proof.

\bibliographystyle{spmpsci}
\bibliography{ElasticityComplex}

\end{document}